\newcommand\cyr{%
\renewcommand\rmdefault{wncyr}%
\renewcommand\sfdefault{wncyss}%
\renewcommand\encodingdefault{OT2}%
\normalfont
\selectfont}
\DeclareTextFontCommand{\textcyr}{\cyr}
\DeclareFontFamily{OT1}{rsfs}{}
\DeclareFontShape{OT1}{rsfs}{n}{it}{<-> rsfs10}{}
\DeclareMathAlphabet{\mathscr}{OT1}{rsfs}{n}{it}
\numberwithin{equation}{section}
\newtheorem{theorem}{Theorem}[section]
\newtheorem{lemma}[theorem]{Lemma}
\newtheorem{proposition}[theorem]{Proposition}
\newtheorem{corollary}[theorem]{Corollary}
\theoremstyle{definition}
\newtheorem{definition}[theorem]{Definition}
\newtheorem{remark}[theorem]{Remark}
\newtheorem{question}[theorem]{Question}
\newtheorem{problem}[theorem]{Problem}
\newtheorem{example}[theorem]{Example}
\newcommand{\rel}{\operatorname{rt}}
\newcommand{\Spec}{\operatorname{Spec}}
\newcommand{\gr}{\operatorname{gr}}
\newcommand{\Ext}{\operatorname{Ext}}
\newcommand{\Ann}{\operatorname{Ann}}
\newcommand{\hdeg}{\operatorname{hdeg}}
\newcommand{\Proj}{\operatorname{Proj}}
\newcommand{\ini}{\operatorname{in}}
\newcommand{\arn}{\operatorname{ar}}
\newcommand{\e}{\varepsilon}
\newcommand{\D}{\delta}
\newcommand{\reg}{\operatorname{reg}}
\newcommand{\sdeg}{\operatorname{sdeg}}
\newcommand{\fm}{\frak{m}}
\newcommand{\fp}{\frak{p}}
\begin{document}
\title{When does a perturbation of the equations\\ preserve the normal cone?}

\author[Pham Hung Quy]{Pham Hung Quy}
\address{Department of Mathematics, FPT University, Hanoi, Vietnam}
\email{quyph@fe.edu.vn}

\author[Ngo Viet Trung]{Ngo Viet Trung}
\address{International Centre for Research and Postgraduate Training, Institute of Mathematics, Vietnam Academy of Science and Technology, 18 Hoang Quoc Viet, Hanoi, Vietnam}
\email{nvtrung@math.ac.vn}


\keywords{Perturbation,  singularity, normal cone, filter regular sequence, associated graded ring, Rees algebra, initial ideal, Artin-Rees number, Hilbert-Samuel function, extended degree, filtration, monomial order}
\subjclass[2010]{Primary 13A30, 13D10; Secondary 13H15, 13P10, 14B12}

\begin{abstract} 
Let $(R,\fm)$ be a local ring and $I, J$ two arbitrary ideals of $R$. Let $\gr_J(R/I)$ denote the associated graded ring of $R/I$ with respect to $J$, which corresponds to the normal cone in algebraic geometry. With regards to the finite determinacy of singularity with respect to the Jacobian ideal \cite{CS1,CS2} we study the problem for which ideal $I = (f_1,...,f_r)$ does there exist a number $N$ such that if $f_i' \equiv f_i \mod J^N$, $i = 1,...,r$, and  $I' = (f_1',...,f_r')$, then $\gr_J(R/I) \cong \gr_J(R/I')$. This problem arises from a recent result of Ma, Quy and Smirnov in the case $J$ is an $\fm$-primary ideal \cite{MQS}, which solves a long standing conjecture of Srinivas and Trivedi on the invariance of Hilbert functions under small perturbations \cite{ST1}. Their approach involves Hilbert functions and cannot be used to study the above general problem. 
Our main result shows that such a number $N$ exists if $f_1,...,f_r$ is locally a regular sequence outside the locus of $J$. It has interesting applications to a range of related problems. 
\end{abstract}

\maketitle

\section{Introduction}

Let $I = (f_1,...,f_r)$ be an ideal in a local ring $(R,\fm)$. An ideal $I' = (f_1',...,f_r')$ is called
a small perturbation of $I$ if $f_i' \equiv f_i  \mod \fm^N$ for $N \gg 0$. It is of great interest to know which properties of $R/I$ are preserved by $R/I'$ under small perturbation.
For instance, when studying the singularities of an analytic space we often want to replace the defining power series by their $n$-jets for some $n \gg 0$. This problem have been studied in singularity theory, mainly with the aim to find necessary or sufficient conditions for $R/I \cong R/I'$ (see e.g. \cite{CS1,CS2,GP,MS}). \par

In 1996, Srinivas and Trivedi \cite{ST1} showed that the Hilbert-Samuel function of $R/I$ with respect to an $\fm$-primary ideal $J$ does not change under small perturbations of $I$ if $R$ is a generalized Cohen-Macaulay ring and $f_1,...,f_r$ is part of a system of parameters.  They conjectured that the same is true if $R$ is an arbitrary local ring and $f_1,...,f_r$ is a filter-regular sequence, which means that $f_1,...,f_r$ is locally a regular sequence on the punctured spectrum. The conjecture of Srinivas and Trivedi has been recently solved by Ma, Quy and Smirnov \cite{MQS}. Their proof is based on two deep results of Huneke and Trivedi on the height of ideals \cite{HT} and of Eisenbud on approximation of complexes \cite{Ei}.
Actually, Srinivas and Trivedi \cite{ST1} as well as Ma, Quy and Smirnov \cite{MQS} proved that $\gr_J(R/I) \cong \gr_J(R/I')$, which implies that the Hilbert-Samuel functions of $R/I$ and $R/I'$ with respect to $J$ are the same. 

The work of Ma, Quy and Smirnov \cite{MQS} leads to the following general problem:

\begin{problem} \label{general} 
Let $J$ be an {\em arbitrary ideal}. For which ideal $I = (f_1,...,f_r)$ does there exist a number $N$ such that if $f_i' \equiv f_i \mod J^N$, $i = 1,...,r$, and  $I' = (f_1',...,f_r')$, then $\gr_J(R/I) \cong \gr_J(R/I')$?
\end{problem}

This general problem is of interest because $\Spec(\gr_J(R/I))$ is the {\em normal cone} and $\Proj(\gr_J(R/I))$ is the exceptional fiber of the blow-up of $R/I$ along $J$. 
The problem of finding conditions for $R/I \cong R/I'$ has been already considered in singularity theory, when $R$ is a power series ring and $J$ is  the Jacobian ideal of $I$, by Cutkosky and Srinivasan \cite{CS1,CS2}, M\"ohring and van Straten \cite{MS}.  The method of Ma, Quy and Smirnov can not be used to study Problem \ref{general} because it depends on the theory of Hilbert-Samuel functions, which is not available when $J$ is not an $\fm$-primary ideal. 

We have found a new approach which allows us to solve  
Problem \ref{general}. Our main result is the following theorem.

\begin{theorem} \label{intro}
Let $J$ be an arbitrary ideal of a local ring $R$. Let $I = (f_1,...,f_r)$, where $f_1,...,f_r$ is a $J$-filter regular sequence.  
There exists an explicit number $N$ such that if $f_i' \equiv f_i \mod J^N$, $i = 1,...,r$, and $I' = (f_1',...,f_r')$, then $f_1',...,f_r'$ is a $J$-filter regular sequence and
$\gr_J(R/I) \cong \gr_J(R/I').$
\end{theorem}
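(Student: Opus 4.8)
The plan is to translate the statement entirely into a statement about leading ideals in the associated graded ring. Write $G = \gr_J(R) = \bigoplus_{n\ge 0} J^n/J^{n+1}$, and for an ideal $\fa$ of $R$ let $\fa^*\subseteq G$ be its \emph{leading ideal}, generated by the leading forms $a^* = a + J^{n+1}\in J^n/J^{n+1}$ of all $a\in\fa$ with $a\in J^n\setminus J^{n+1}$; its graded pieces are $[\fa^*]_n = (\fa\cap J^n + J^{n+1})/J^{n+1}$. The standard identification $\gr_J(R/I)\cong G/I^*$ (and likewise for $I'$) reduces the whole theorem to the single equality $I^* = (I')^*$ in the Noetherian standard graded ring $G$, whose degree-zero part $R/J$ is local. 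Once this is shown, the geometric assertion about the normal cone is immediate.

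The first step is to observe that the two leading ideals \emph{automatically} agree in low degrees, with no hypothesis whatsoever on the $f_i$. Indeed, if $x\in I\cap J^n$ with $n\le N-1$, write $x=\sum_i h_i f_i$ and set $x'=\sum_i h_i f_i'\in I'$; then $x-x' = \sum_i h_i(f_i-f_i')\in J^N\subseteq J^{n+1}$, so $x'\in I'\cap J^n$ and $x\equiv x'\bmod J^{n+1}$. Hence $[I^*]_n = [(I')^*]_n$ for every $n\le N-1$, the reverse inclusion being symmetric because $f_i\equiv f_i'\bmod J^N$ is a symmetric relation. Consequently, if $N$ exceeds the largest degree $d$ of a minimal generator of $I^*$, then $I^* = G\cdot[I^*]_{\le N-1} = G\cdot[(I')^*]_{\le N-1}\subseteq (I')^*$, and the same bound applied to $I'$ yields the reverse inclusion. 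Everything therefore comes down to producing a generation-degree bound $d$ for $I^*$ that depends only on data left unchanged by the perturbation, and then setting $N>d$. Notably, filter-regularity plays \emph{no} role up to here; it is needed precisely to control this bound.

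This control is where the filter-regularity hypothesis does its work, and where I expect the main difficulty to lie. Since $G$ is Noetherian the bound $d$ exists, but an a priori value valid simultaneously for $I$ and for the unknown $I'$ is exactly what must be extracted. The natural route is induction on $r$: set $\bar R = R/(f_1,\dots,f_{r-1})$ and study the multiplication map $\bar R \xrightarrow{f_r} \bar R$. Filter-regularity says precisely that $(0 :_{\bar R} f_r)$ is annihilated by a power of $J$, so after passing to leading forms the failure of $f_r^*$ to be a nonzerodivisor on $\gr_J(\bar R)$ is measured by a module that is $G_+$-torsion, hence finitely generated and killed in a bounded degree. A Valabrega--Valla / Artin--Rees comparison of the $J$-adic filtration on $R/I$ with the one induced through $f_r$ then converts this into an explicit bound on the Artin--Rees number $\arn_J(I)$, and thereby on $d$, in terms of the $J$-orders of the $f_i$ together with an extended-degree type invariant of $(R,J)$ — quantities untouched by the perturbation. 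This simultaneously gives $\arn_J(I)=\arn_J(I')$ and supplies the explicit bound on $N$ demanded by Problem~\ref{MQS}.

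It remains to check that $f_1',\dots,f_r'$ is again a $J$-filter regular sequence, which I would treat separately: filter-regularity is regularity on the punctured spectrum, controlled by the associated primes $\Ass(R/(f_1,\dots,f_{i-1}))$ not containing $J$ and by height, and these are stable once $f_i'\equiv f_i$ modulo a sufficiently high power of $J$; one verifies that the same $N$ as above also enforces this. The hard part, then, is genuinely the uniform and perturbation-stable estimate for $\arn_J(I)$ (equivalently, the generation degree of $I^*$) via the inductive torsion analysis; the low-degree matching and the passage from surjection to isomorphism are soft by comparison.
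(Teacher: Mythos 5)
Your reduction to leading ideals and your low-degree matching step are both correct: indeed $[\ini(I)]_n=(I\cap J^n+J^{n+1})/J^{n+1}$, and replacing $x=\sum_i h_if_i$ by $x'=\sum_i h_if_i'$ shows $[\ini(I)]_n=[\ini(I')]_n$ for all $n\le N-1$ with no hypothesis on the $f_i$; combined with $N>d(\ini(I))=\arn_J(I)$ (Proposition \ref{ar}) this yields $\ini(I)\subseteq\ini(I')$, essentially recovering the epimorphism of \cite[Lemma 3.2]{MQS}. The gap is the reverse inclusion, exactly where you say you expect the difficulty: you need $N>d(\ini(I'))=\arn_J(I')$, and the bound you propose --- in terms of the $J$-orders of the $f_i$ and an ``extended-degree type invariant of $(R,J)$'' --- does not exist. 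Take $R=k[[x,y,z]]/(xy)$ and $J=\fm$: the elements $f_n=x-z^n$ are nonzerodivisors (hence $J$-filter regular) of order $1$ for every $n$, yet $yf_n=-yz^n$ has initial form $-y^*z^{*n}$, and one checks that every element of $(f_n)$ of order at most $n$ has initial form in $(x^*)$, so $\ini(f_n)$ needs a minimal generator in degree $n+1$ and $\arn_\fm(f_n)\ge n+1$ is unbounded while the orders and the ring stay fixed. Thus no bound on $\arn_J(I')$ can be extracted from data that ignores how $I'$ sits relative to $I$; and if you instead allow invariants of $R/I'$ itself (an extended degree, say, which in any case is only defined when $J$ is $\fm$-primary), the assertion that these agree with the corresponding invariants of $R/I$ is itself a consequence of the theorem being proved, so the plan is circular. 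A perturbation-stable a priori bound on $\arn_J(I')$ is not an ingredient of the theorem; it essentially \emph{is} the theorem.

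The paper escapes this circularity by inverting your logical order: it proves $\ini(I')=\ini(I)$ first and only then deduces $\arn_J(I')=\arn_J(I)$ (Theorem \ref{main}(iv) via Proposition \ref{ar}), never bounding $\arn_J(I')$ in advance. The engine is the element-level Proposition \ref{one}: if $\e\in J^c$ with $c=\max\{a_J(0:f),\arn_J(f)+1\}$, then $(gf)^*=(g(f+\e))^*$ for \emph{every} $g\in R$; since $\ini(f)$ and $\ini(f+\e)$ are generated by exactly these elements, both inclusions follow simultaneously. The passage to sequences is an induction on $r$ through the two-element case (Proposition \ref{two}), which rests on the exchange isomorphism of Lemma \ref{exchange} for colon ideals and on the quantitative bookkeeping $a_J\bigl((f_1',\ldots,f_{i-1}'):f_i'/(f_1',\ldots,f_{i-1}')\bigr)\le 2^{i-1}a_i$ of Theorem \ref{main}(i). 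That bookkeeping is also precisely what proves that $f_1',\ldots,f_r'$ is again a $J$-filter regular sequence --- the point you defer to ``one verifies''; associated primes of $(f_1',\ldots,f_{i-1}')$ are not stable under perturbation in any obvious sense, and in the paper this part is inseparable from the induction rather than a separate soft check. Your reduction and low-degree matching are fine as soft steps, but they cannot substitute for this inductive, element-level analysis.
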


Recall that a sequence of elements $f_1,...,f_r$ in $R$ is  {\em $J$-filter regular} if $f_i \not\in \fp$ for any associated prime $\fp \not\supseteq J$ of $(f_1,...,f_{i-1})$, $i = 1,....,r$, i.e. $f_1,...,f_r$ is locally a regular sequence outside the locus of $J$.  Besides the usual filter-regular sequences, which
plays an important role in the theory of generalized Cohen-Macaulay rings \cite{SCT}, this notion appears in several topics such as local cohomology modules \cite{ASc,DQ,NS, Sc}, $d$-sequences \cite{Hu,Tr1}, and Macaulayfication \cite{CC,Ka}. 
Associated graded rings of the form $\gr_J(R/I)$, where $I$ is generated by a $J$-filter regular sequence, arise, for instance, in the theory of generalized multiplicities \cite{AM,FOV}. 

The number $N$ of Theorem \ref{intro} is given in terms of Artin-Rees numbers and Loewy lengths attached to the sequence $f_1,...,f_r$ (see Theorem \ref{main} for details). 
This solves a problem of Ma, Quy and Smirnov \cite[Question 1.1]{MQS} which asks for an explicit bound for the number $N$ in their result. This number is important for applications, for instance, if we want to approximate an analytic singularity by an algebraic singularity.

The proof of Theorem \ref{intro} is based on the relationship between initial ideals and Artin-Rees numbers. It allows us to get better insight into the nature of perturbations. Along the proof of Theorem \ref{intro}, we also show that the Artin-Rees numbers of $I$ and $I'$ with respect to $J$ are the same under small perturbations, which confirms another problem raised by Ma, Quy and Smirnov in \cite[Paragraph before Corollary 3.6]{MQS}. 

We also obtain the following result, which provides a converse to Theorem \ref{intro}. Notice that Theorem \ref{intro} implies $\gr_J(R/(f_1,...,f_i)) \cong \gr_J(R/(f_1',...,f_i'))$ for all $i = 1,...,r$.

\begin{theorem} \label{intro2}
Let $J$ be an arbitrary ideal and $f_1,...,f_r$ a sequence of elements in a local ring $R$.
There exists a number $N$ such that 
$$\gr_J(R/(f_1,...,f_i)) \cong \gr_J(R/(f_1',...,f_i'))$$
for any sequence $f_1',...,f_i'$ with  $f'_i \equiv f_i \mod J^N$, $i = 1,...,r$, if and only if $f_1,...,f_r$ is a $J$-filter regular sequence.
\end{theorem}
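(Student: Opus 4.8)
The statement is an equivalence, so I would treat the two implications separately. The forward implication (``if'') is essentially already recorded in the remark preceding the theorem: if $f_1,\dots,f_r$ is a $J$-filter regular sequence, then so is each initial segment $f_1,\dots,f_i$, so Theorem \ref{intro} applies to each of the finitely many truncations; taking $N$ to be the maximum of the resulting thresholds produces a single $N$ for which $\gr_J(R/(f_1,\dots,f_i)) \cong \gr_J(R/(f_1',\dots,f_i'))$ simultaneously for all $i$. Thus the real content is the converse, which I would prove in contrapositive form: assuming $f_1,\dots,f_r$ is \emph{not} $J$-filter regular, I would produce, for every prescribed $N$, an index $i$ and a perturbation $f_i' \equiv f_i \bmod J^N$ with $\gr_J(R/(f_1,\dots,f_i)) \not\cong \gr_J(R/(f_1',\dots,f_i'))$.

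To set up the construction, let $i$ be the least index at which filter regularity fails, so that $f_1,\dots,f_{i-1}$ is $J$-filter regular while $f_i \in \fp$ for some $\fp \in \Ass(R/(f_1,\dots,f_{i-1}))$ with $J \not\subseteq \fp$. Write $A = (f_1,\dots,f_{i-1})$ and choose $s \in R$ with $(A :_R s) = \fp$ together with an element $a \in J \setminus \fp$; since $\fp$ is prime we then have $a^n s \notin A$ for all $n$, while $f_i s \in A$ because $f_i \in \fp$. I would keep $f_1,\dots,f_{i-1}$ fixed (a legitimate perturbation) and perturb only the $i$-th element, taking $f_i' = f_i + a^{M}$ with $M \ge N$, so that $a^M \in J^N$ and $f_i'$ is a genuine $J^N$-perturbation. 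The effect is structural: modulo $A + f_i' R$ one has $f_i \equiv -a^{M}$, so $f_i s \in A$ forces $a^{M} s \in A + f_i' R$, whereas no such relation need hold modulo $A + f_i R$. Equivalently, since $f_i \in \fp$ but $f_i' = f_i + a^M \notin \fp$, the prime $\fp$ lies in $\Supp(R/(A+f_iR))$ but is eliminated from $\Supp(R/(A+f_i'R))$: the perturbation destroys exactly the component responsible for the failure of filter regularity.

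It then remains to convert this structural change into failure of the isomorphism. The simplest model is $R=k[[x,y]]/(y^2)$, $J=(x)$, $A=0$, $f_i=y$: here $\fp=(y)\ni y$ and $J\not\subseteq\fp$, and with $a=x$ the quotient $R/(y+x^{M})R\cong k[[x]]/(x^{2M})$ becomes Artinian, so $\gr_J$ passes from the infinite Hilbert function of $k[x]$ to the truncated one of $k[x]/(x^{2M})$; in particular $\dim R/f_iR = 1 \ne 0 = \dim R/f_i'R$. In general the perturbation likewise destroys the primary component of $R/(A+f_iR)$ at $\fp$, and I would detect the resulting change through a numerical invariant of $\gr_J$: its dimension when $\fp$ is top-dimensional, and otherwise the Hilbert function or the Achilles--Manaresi multiplicity sequence, which is sensitive to lower-dimensional components and hence remains usable when $J$ is not $\fm$-primary and $\fp$ is embedded. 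I expect this detection step to be the main obstacle, and it is delicate for a specific reason: one cannot simply localize at $\fp$, because $J\not\subseteq\fp$ makes $\gr_J$ trivial after inverting a unit of $J$, so the vanishing of the $\fp$-component must be read off from the part of the normal cone lying over $\V(J)$. Making this precise — showing that killing the $\fp$-component strictly changes the chosen invariant uniformly in $N$ — is where the exact choice of the exponent $M$, together with a reduction to a finite-length situation via completion, carries the real weight. Finally, the degenerate case $f_i=0$ is separate and trivial, since then $f_i'=a^{M}\ne 0$ already turns $R/(A+f_iR)=R/A$ into a proper quotient and the invariant drops.
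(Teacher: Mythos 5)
Your ``if'' direction is fine and is exactly the paper's argument: truncations of a $J$-filter regular sequence are $J$-filter regular, so Theorem \ref{intro} applied to each $i$ (or Theorem \ref{main}, whose single $N$ already works for all truncations) gives a uniform bound. The genuine gap is in the ``only if'' direction, and it is precisely the step you flag yourself: the detection step. Your construction produces, from a failure of filter regularity at the least index $i$, the perturbation $f_i' = f_i + a^M$ with $a \in J \setminus \fp$, and correctly observes that $\fp$ disappears from $\Supp(R/(A+f_i'R))$; but the theorem requires $\gr_J(R/(A+f_iR)) \not\cong \gr_J(R/(A+f_i'R))$, and no argument for this is given beyond the toy example. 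The invariants you propose do not suffice as stated. Dimension fails whenever $f_i$ also lies on a second component $\fq \supseteq J$ with $\dim R/\fq = \dim R/\fp$: then $a^M \in J \subseteq \fq$ forces $f_i' \in \fq$, so both quotients have the same dimension (for instance $R = k[[x,y,z]]/(xyz)$, $J = (y)$, $\fp = (x)$, $\fq = (y)$, $f_i = xy$, $a = y$). When $\fp$ is embedded or of low dimension, the Achilles--Manaresi multiplicity sequence is exactly the kind of invariant that is insensitive to such components, and whether the full bivariate function must change is the hard unresolved question -- as you note, one cannot localize at $\fp$ because $J \not\subseteq \fp$. So the core implication is left unproven; indeed it is not even clear that your particular perturbation always destroys the isomorphism, since the theorem only promises that \emph{some} perturbation does.

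The paper's proof of this direction (Theorem \ref{converse}) avoids detection entirely, and this is the idea your proposal is missing: it runs the implication directly rather than contrapositively, so that the invariance hypothesis is \emph{used} on a well-chosen perturbation instead of being \emph{refuted} by one. Assuming a uniform $N$ exists, Davis's prime-avoidance lemma \cite{Da} yields $\e_i \in J^N$ such that $f_i' = f_i + \e_i$ avoids every associated prime of $(f_1,\dots,f_{i-1})$ not containing $J$, so $f_i'$ is $J$-filter regular in $\bar R = R/(f_1,\dots,f_{i-1})$. The hypothesis gives $\ini(f_1,\dots,f_{i-1},f_i') = \ini(f_1,\dots,f_i)$, hence equal Artin--Rees numbers by Proposition \ref{ar}, and these may be assumed $< N$; by Lemma \ref{qar} the same bound holds for $\arn_J(f_i'\bar R)$. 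Now Lemma \ref{filter} is applied in $\bar R$ with the roles of the two elements reversed: $f_i$ is a $J^N$-perturbation of the filter regular element $f_i'$, and $N > \arn_J(f_i'\bar R)$, so $f_i$ itself is $J$-filter regular in $\bar R$. No perturbation ever has to be shown to change $\gr_J$, which is exactly the statement your approach cannot yet supply.
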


In particular, $\gr_J(R/(f_1)) \cong \gr_J(R/(f_1'))$ for any  $f_1' \equiv f_1  \mod J^N$, $N \gg 0$, if and only if $f_1$ is a $J$-filter regular element. Theorem \ref{intro2} shows that the condition of $I$ being generated by a $J$-filter regular sequence is practically the best solution to Problem \ref{general}.  

Theorem \ref{intro} has several interesting applications. For a non-$\fm$-primary ideal, there is a bivariate function introduced by Achilles and Manaresi \cite{AM}, which plays a similar role as the Hilbert-Samuel function. It is asymptotically a polynomial function whose normalized leading coefficients give a multiplicity sequence, which generalizes the Hilbert-Samuel multiplicity and the Segre numbers in singularity theory \cite{FOV,GG,PTUV}. We show that the Achilles-Manaresi function of $R/I$ with respect to $J$ does not change under small $J$-adic perturbations of $I$ (Corollary \ref{AM}). Since the structure of $\gr_J(R/I)$ is strongly related to that of the Rees algebra $\Re_J(R/I)$, which corresponds to the notion of blow-up of $R/I$ along $J$ in algebraic geometry, we can also show 
that the relation type, the Castelnuovo-Mumford regularity, the Cohen-Macaulayness and the Gorensteiness of $\Re_J(R/I)$ are preserved under small $J$-adic perturbations of $I$ (Corollary \ref{Rees}). All these results would be considered as very difficult if one does not know Theorem \ref{intro}.

Given an $\fm$-primary ideal $J$, we call the least number $N$ such that if $I' = (f_1',...,f_r')$ with $f_i' \equiv f_i \mod J^N$,  then $R/I$ and $R/I'$ have the same Hilbert-Samuel function with respect to $J$ (or $\gr_J(R/I) \cong \gr_J(R/I')$), the {\em Hilbert perturbation index} of $R/I$ (or the {\em perturbation index} of $\gr_J(R/I)$). In general, an upper bound for the Hilbert perturbation index of $R/I$ can always be used to bound the perturbation index of $\gr_J(R/I)$ (Proposition \ref{Hilbert}). 
If $R$ is a Cohen-Macaulay ring, Srinivas and Trivedi \cite{ST2} gave an upper bound for the Hilbert perturbation index of $R/I$ in terms of the multiplicity. 
If $R$ is not a Cohen-Macaulay ring, one can not bound the Hilbert perturbation index solely in terms of the multiplicity. 
If $R$ is a generalized Cohen-Macaulay ring, Quy and V.D.~Trung \cite{QT} gave an upper bound for the Hilbert perturbation index of $R/I$ in terms of the homological degree, a generalization of the multiplicity. 
However, their proof does not work in the general case, when $R$ is an arbitrary local ring. Using Theorem \ref{intro} we are able to give a general upper bound for the perturbation index of $\gr_J(R/I)$ in terms of any extended degree, which includes the homological degree as a special case (Theorem \ref{m-primary}).

Finally, as an application of our approach we study the more general problem:

\begin{problem} \label{filteration}
Let $F = \{J_n\}_{n\ge 0}$ be a filtration of ideals in $R$. Let $I = (f_1,...,f_r)$. 
When does there exists a number $N$ such that $I$ and $I'$ have the same initial ideal with respect to $F$ for any ideal $I' = (f_1',...,f_r')$ with $f_i \equiv f_i' \mod J_N$?
\end{problem}

We are inspired by a recent conjecture of Adamus and Seyedinejad \cite{AS} which states that if $I = (f_1,...,f_r)$ is a complete intersection ideal in a convergent power series ring and $I' = (f_1',...,f_r')$, where $f_i'$ is the $n$-jet of $f_i$ for some $n \gg 0$, $i = 1,...,r$, then $I$ and $I'$ have the same initial ideals with respect to the degree lexicographic monomial order. A solution to this conjecture can be deduced from the afore mentioned result of Srinivas and Trivedi. A direct proof has been given by Adamus and Patel \cite{AP1,AP2}. In general, the initial ideal with respect to any Noetherian monomial order can be viewed as the initial ideal with respect to a Noetherian filtration. 
Initial ideals with respect to filtrations also appear in Arnold's classification of hypersurface singularities \cite{Ar,BGM}. For an arbitrary Noetherian filtration $F$, we show that there exists a  number $N$ such that $(f_1,...,f_i)$ and $(f_1',...,f_i')$ have the same initial ideal with respect to $F$ for any sequence $f_1',...,f_r'$ with $f_i \equiv f_i' \mod J_N$, $i = 1,...,r$, if and only if $f_1,...,f_r$ is a $J_1$-filter regular sequence (Theorem \ref{main2} and Theorem \ref{converse 2}). This gives a satisfactory answer to Problem \ref{filteration}.

The paper is organized as follows. In Section 2 we prepare basic facts on initial ideal and Artin-Rees number. In Section 3 we prove Theorem \ref{intro} and Theorem \ref{intro2}. Actually, these results follow from Theorem \ref{main} and Theorem \ref{converse}, respectively. In Section 4 we give bounds for the perturbation index in terms of the extended degree. Perturbations with respect to filtrations of ideals are dealt with in Section 5. For unexplained terminology we refer the reader to \cite{BH}.

\noindent{\bf Acknowledgement}. This work is supported by grant NCXS02.01/22-23 of Vietnam Academy of Science and Technology. It was initiated during a research stay of the authors at Vietnam Institute for Advanced Study in Mathematics (VIASM) in 2020. 


\section{Initial ideal and Artin-Rees number}

Let $(R,\fm)$ be a local ring. Let $I$ and $J$ be arbitrary ideal in $R$. Set
$$\gr_J(R/I) := \bigoplus_{n \ge 0} (J^n+I)/(J^{n+1}+I),$$
which is the associated graded ring of $R/I$ with respect to the ideal $J+I/I$.

For every element $f \neq 0$ in $R$ we denote by $o(f)$ the {\em order} of $f$ with respect to the $J$-adic filtration, which is the largest number $n$ such that $f \in J^n$, and by $f^*$ the {\em initial element} of $f$ in $\gr_J(R)$, which is the residue class of $f$ in $J^{o(f)}/J^{o(f)+1}$. 
For convenience, we set $0^* = 0$. Let 
$\ini(I)$ denote the {\em initial ideal} of $I$ in $\gr_J(R)$, which is generated by the elements $f^*$, $f \in I$. 

The following properties of initial ideals are more or less known.

\begin{lemma} \label{repre}
$\gr_J(R/I) \cong \gr_J(R)/\ini(I).$
\end{lemma}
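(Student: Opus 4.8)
The plan is to exhibit a natural surjective graded ring homomorphism $\gr_J(R)\to\gr_J(R/I)$ and then identify its kernel with $\ini(I)$, so that the isomorphism follows from the first isomorphism theorem. First I would consider the map $\phi\colon\gr_J(R)\to\gr_J(R/I)$ induced in each degree by the quotient $R\to R/I$: an element $x+J^{n+1}\in J^n/J^{n+1}$ is sent to the class of $x$ in $(J^n+I)/(J^{n+1}+I)$. One checks directly that this is well defined (if $x\in J^{n+1}$ then $x\in J^{n+1}+I$), respects the grading, and is multiplicative, so it is a graded ring homomorphism; it is surjective because any $y\in J^n+I$ differs from some $x\in J^n$ by an element of $I$, so the class of $y$ is the image of $x+J^{n+1}$.

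Next I would compute the kernel degree by degree. In degree $n$, the element $x+J^{n+1}$ dies under $\phi$ exactly when $x\in J^n\cap(J^{n+1}+I)$, so $(\ker\phi)_n=\bigl(J^n\cap(J^{n+1}+I)\bigr)/J^{n+1}$. Since $J^{n+1}\subseteq J^n$, the modular (Dedekind) law applies with the ambient group $J^n$, giving $J^n\cap(J^{n+1}+I)=J^{n+1}+(J^n\cap I)$, whence
$$(\ker\phi)_n=\bigl((J^n\cap I)+J^{n+1}\bigr)/J^{n+1}.$$

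The remaining and most delicate point is to match this with the initial ideal, i.e.\ to show that $\ini(I)$ has $n$-th graded piece exactly $\bigl((J^n\cap I)+J^{n+1}\bigr)/J^{n+1}$. For one inclusion I would verify that the family $\bigl\{\bigl((J^n\cap I)+J^{n+1}\bigr)/J^{n+1}\bigr\}_{n\ge0}$ is itself a graded ideal of $\gr_J(R)$: it is visibly a graded subgroup, and if $a\in J^n\cap I$ and $x\in J^m$ then $xa\in J^{m+n}\cap I$, so multiplication by the initial element of $x$ keeps us inside the family. As this ideal contains every generator $f^{*}=f+J^{o(f)+1}$ of $\ini(I)$ (because $f\in J^{o(f)}\cap I$), it contains $\ini(I)$. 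Conversely, a nonzero class in $\bigl((J^n\cap I)+J^{n+1}\bigr)/J^{n+1}$ is represented by some $a\in J^n\cap I$ with $a\notin J^{n+1}$, i.e.\ $o(a)=n$, so the class equals $a^{*}\in\ini(I)$; hence the two graded ideals agree in every degree. Combining the three displays gives $\ker\phi=\ini(I)$, and the first isomorphism theorem yields $\gr_J(R/I)\cong\gr_J(R)/\ini(I)$. I expect the only genuinely fiddly point to be the bookkeeping in this last step distinguishing $o(a)=n$ from $a\in J^{n+1}$; the rest is formal, the substantive ingredient being the single application of the modular law.
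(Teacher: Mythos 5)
Your proof is correct and follows essentially the same route as the paper: both rest on the identification $\ini(I)=\bigoplus_{n\ge 0}\bigl((J^n\cap I)+J^{n+1}\bigr)/J^{n+1}$ (which the paper dismisses as ``easily seen'' and you verify in detail) together with the modular-law computation $J^n\cap(J^{n+1}+I)=(J^n\cap I)+J^{n+1}$. Your packaging via the explicit surjection $\gr_J(R)\to\gr_J(R/I)$ and the first isomorphism theorem is just a slightly more careful phrasing of the paper's termwise chain of isomorphisms, with the bonus of making clear that the isomorphism is one of graded rings.
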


\begin{proof} 
It is easily seen that 
$\ini(I)  = \bigoplus_{n \ge 0} (J^n \cap I + J^{n+1})/J^{n+1}$ for all $n \ge 0$. Therefore,
\begin{align*}
\gr_J(R/I) & = \bigoplus_{n \ge 0} \dfrac{J^n+I}{J^{n+1}+I} 
\cong \bigoplus_{n \ge 0} \dfrac{J^n}{J^n \cap (J^{n+1}+ I)}\\
 & = \bigoplus_{n \ge 0} \dfrac{J^n}{J^n \cap I +J^{n+1}} = \gr_J(R)/\ini(I).
\end{align*}
\end{proof}

\begin{lemma} \label{qin}
Let $K$ be an ideal in $I$. Let $\ini(I/K)$ denote the initial ideal of $I/K$ in $\gr_J(R/K)$. Then
 $\ini(I/K) = \ini(I)/\ini(K)$.
\end{lemma}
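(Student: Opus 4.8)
The plan is to verify the identity one graded piece at a time, matching both sides under the canonical isomorphism $\gr_J(R/K) \cong \gr_J(R)/\ini(K)$ supplied by Lemma \ref{repre}. The essential tool is the explicit formula extracted in the proof of Lemma \ref{repre}: for any ideal $L$ the $n$-th graded component of $\ini(L)$ is $(J^n \cap L + J^{n+1})/J^{n+1}$. This computation is purely formal, so it applies verbatim to the filtered ring $R/K$ with filtration $\bar J^n := (J^n+K)/K$ and to the ideal $I/K$, and this is what I would invoke to describe $\ini(I/K)$.

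First I would fix the identification. Applying Lemma \ref{repre} to $K$, the $n$-th piece of $\gr_J(R/K) \cong \gr_J(R)/\ini(K)$ is induced by the inclusion $J^n \hookrightarrow J^n+K$, sending the class of $x \in J^n$ in $J^n/(J^n\cap K + J^{n+1})$ to its class in $(J^n+K)/(J^{n+1}+K)$; here one uses the modular law $J^n\cap(J^{n+1}+K)=J^{n+1}+(J^n\cap K)$, valid because $J^{n+1}\subseteq J^n$. Since $K\subseteq I$ gives $J^n\cap K\subseteq J^n\cap I$, we have $\ini(K)\subseteq\ini(I)$, so $\ini(I)/\ini(K)$ is a well-defined homogeneous ideal of $\gr_J(R)/\ini(K)$ whose $n$-th graded piece is $((J^n\cap I)+J^{n+1})/((J^n\cap K)+J^{n+1})$. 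Next I would compute the $n$-th piece of $\ini(I/K)$ from the formula above: it equals $(\bar J^n\cap(I/K)+\bar J^{n+1})/\bar J^{n+1}$. The one genuinely non-formal step is to simplify $\bar J^n\cap(I/K)$, whose preimage in $R$ is $(J^n+K)\cap I$; here a second application of the modular law, again using $K\subseteq I$, gives $(J^n+K)\cap I=(J^n\cap I)+K$, so the $n$-th piece becomes $((J^n\cap I)+J^{n+1}+K)/(J^{n+1}+K)$.

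Finally I would transport this through the isomorphism of the first step. Every class in $((J^n\cap I)+J^{n+1}+K)/(J^{n+1}+K)$ has a representative in $(J^n\cap I)+J^{n+1}\subseteq J^n$, since the $K$-summand vanishes modulo $J^{n+1}+K$; hence under the inverse isomorphism it corresponds to the image of $(J^n\cap I)+J^{n+1}$ in $J^n/(J^n\cap K+J^{n+1})$, which (using $J^n\cap K\subseteq J^n\cap I$) is precisely $((J^n\cap I)+J^{n+1})/((J^n\cap K)+J^{n+1})$. This agrees degreewise with the computation of $\ini(I)/\ini(K)$, so the two homogeneous ideals coincide. I expect the only point requiring care to be bookkeeping: keeping the two applications of the modular law straight and checking that representatives can always be chosen inside $J^n$, so that the identification of the first step genuinely applies.
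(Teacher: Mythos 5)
Your proof is correct and follows essentially the same route as the paper: both compute the $n$-th graded piece of $\ini(I/K)$ as $\bigl((J^n\cap I)+J^{n+1}+K\bigr)/(J^{n+1}+K)$ via the modular law and then match it degreewise with $\bigl((J^n\cap I)+J^{n+1}\bigr)/\bigl((J^n\cap K)+J^{n+1}\bigr)$, the paper doing this in one stroke with the second isomorphism theorem and the intersection computation $\bigl(J^n\cap I+J^{n+1}\bigr)\cap\bigl(J^{n+1}+K\bigr)=J^n\cap K+J^{n+1}$, while you make the same identification explicit through the isomorphism of Lemma \ref{repre}. The extra bookkeeping is sound but does not change the substance of the argument.
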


\begin{proof} 
We have
\begin{align*}
\ini(I/K) & = \bigoplus_{n \ge 0} \frac{J^n \cap I + J^{n+1}+K}{J^{n+1}+K} 
= \bigoplus_{n \ge 0} \frac{J^n \cap I +J^{n+1}}{(J^n \cap I +J^{n+1})\cap (J^{n+1}+K)}\\
& = \bigoplus_{n \ge 0} \frac{J^n \cap I+J^{n+1}}{J^n \cap K +J^{n+1}} = \ini(I)/\ini(K).
\end{align*}
\end{proof}

By Artin-Rees lemma we know that there exist a number $c$ such that
$$J^n \cap I = J^{n-c}(J^c \cap I)$$
for all $n \ge c$. The least number $c$ with this property is called the {\em Artin-Rees number} of $I$ with respect to $J$. We denote this number by $\arn_J(I)$. It is easily seen that $\arn_J(I)$ is the least number $c$ such that $J^{n+1} \cap I  = J(J^n \cap I)$ for all $n \ge c$. \par
 
For any graded ideal $Q$, we denote by $Q_n$ the $n$-th graded component of $Q$ and by 
$d(Q)$ the maximum degree of the elements of a graded minimal generating set of $Q$.

\begin{proposition}\label{ar} 
$\arn_J(I) = d(\ini(I))$.  
\end{proposition}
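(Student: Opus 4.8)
The plan is to translate both sides into statements about the filtration $\{J^n\cap I\}_n$ and compare them degree by degree. Write $G=\gr_J(R)$ and $Q=\ini(I)$; since $R$ is Noetherian, $G$ is Noetherian and $Q$ is a finitely generated graded ideal, so $d(Q)$ is finite. Because $J\cdot J^{n-1}=J^n$, the ring $G$ is standard graded over $G_0=R/J$, generated by $G_1=J/J^2$; consequently, for a homogeneous ideal one has $(G_+Q)_n=G_1Q_{n-1}$, and by graded Nakayama the degrees carrying minimal generators are exactly those $n$ with $Q_n\neq G_1Q_{n-1}$. From the description of $\ini(I)$ recorded in the proof of Lemma \ref{repre} we have $Q_n=(J^n\cap I+J^{n+1})/J^{n+1}$, and a short computation gives $G_1Q_{n-1}=(J(J^{n-1}\cap I)+J^{n+1})/J^{n+1}$. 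Since $J(J^{n-1}\cap I)\subseteq J^n\cap I$ always holds, the equality $Q_n=G_1Q_{n-1}$ is equivalent to
$$J^n\cap I\subseteq J(J^{n-1}\cap I)+J^{n+1}.$$
Thus $d(\ini(I))$ is the largest $n$ for which this inclusion fails, and the task becomes comparing this index with $\arn_J(I)$.

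For the inequality $d(\ini(I))\le\arn_J(I)$, I would set $c=\arn_J(I)$. For every $n\ge c+1$ we have $n-1\ge c$, so by the defining property of the Artin-Rees number $J^n\cap I=J(J^{n-1}\cap I)$; in particular the displayed inclusion holds for all $n\ge c+1$. Hence $Q$ has no minimal generator in degree $>c$, which gives $d(\ini(I))\le c$. This direction is a direct unwinding of definitions.

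The reverse inequality $\arn_J(I)\le d(\ini(I))$ is where the real work lies, and I expect it to be the main obstacle. Put $d=d(\ini(I))$. By the first paragraph the inclusion $J^m\cap I\subseteq J(J^{m-1}\cap I)+J^{m+1}$ holds for every $m\ge d+1$, and I must upgrade it to the genuine equality $J^m\cap I=J(J^{m-1}\cap I)$, which then yields $\arn_J(I)\le d$. The difficulty is exactly the extra term $J^{m+1}$. I plan to remove it by iteration: given $x\in J^m\cap I$, write $x=w+z$ with $w\in J(J^{m-1}\cap I)$ and $z\in J^{m+1}$; the membership $z\in I$ is automatic since $x,w\in I$, so $z\in J^{m+1}\cap I$. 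Now apply the inclusion at level $m+1$ to $z$, and repeat. The point that makes the telescoping close up is the monotonicity $J^{m+k-1}\cap I\subseteq J^{m-1}\cap I$, which lets every newly produced term be absorbed into the \emph{fixed} module $M:=J(J^{m-1}\cap I)$. After $k$ steps this produces $x\in M+(J^{m+k}\cap I)\subseteq M+J^{m+k}$ for all $k$, so $x\in\bigcap_{k}(M+J^{k})$. Applying the Krull intersection theorem in the Noetherian local ring $R/M$ gives $\bigcap_k(M+J^k)=M$, whence $x\in M$ and the equality $J^m\cap I=J(J^{m-1}\cap I)$ follows.

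Combining the two inequalities yields $\arn_J(I)=d(\ini(I))$. The only genuinely delicate step is the last paragraph's passage from the ``$+J^{m+1}$'' inclusion to an honest equality; the absorption trick together with Krull intersection is what I expect to carry this through, while everything else reduces to the standard-graded structure of $\gr_J(R)$ and the explicit degree formula for $\ini(I)$.
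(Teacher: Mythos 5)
Your proof is correct and follows essentially the same route as the paper's: both translate $d(\ini(I))$ into the degree-by-degree condition $J^n\cap I\subseteq J(J^{n-1}\cap I)+J^{n+1}$ via the standard-graded structure of $\gr_J(R)$, get one inequality directly from the definition of $\arn_J(I)$, and obtain the other by the same iteration/absorption argument closed off with Krull's intersection theorem. The only cosmetic difference is that you invoke graded Nakayama explicitly, whereas the paper states the equivalent characterization of $d(\ini(I))$ directly.
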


\begin{proof} 
Let $S = \gr_J(R)$. Note that $S$ is a standard graded ring, i.e. $S$ is generated by $S_1$ over $S_0$. Then $d(\ini(I))$ is the least integer $c$ such that $\ini(I)_{n+1} = S_1\ini(I)_n$ for all $n \ge c$. We have $\ini(I)_n = (J^n \cap I+J^{n+1})/J^{n+1}.$ Therefore, $d(\ini(I))$ is the least integer $c$ such that
$$J^{n+1} \cap I + J^{n+2} = J(J^n \cap I +J^{n+1}) = J(J^n \cap I)+J^{n+2}$$
for all $n \ge c$. For all $n \ge  \arn_J(I)$, we have 
$J^{n+1} \cap I = J(J^n \cap I).$ 
Hence, $\arn_J(I) \ge d(\ini(I))$.
\par

To prove $\arn_J(I) \le d(\ini(I))$, it suffices to show that $J^{n+1} \cap I = J(J^n \cap I)$ for $n \ge d(\ini(I))$.
Let $f \in J^{n+1} \cap I$. Then $f \in J(J^n \cap I)+J^{n+2}.$
Write $f = g  +  f_1$ for some $g  \in J(J^n \cap I)$ and $f_1 \in J^{n+2}$. Since $f, g  \in I$, we have 
$$f_1 \in J^{n+2} \cap I \subseteq J(J^{n+1} \cap I)+J^{n+3} \subseteq J(J^n \cap I)+J^{n+3}.$$ 
Hence, $f  = g + f_1 \in J(J^n \cap I) + J^{n+3}.$
Continuing like that we have $f \in J(I \cap J^n) + J^{n+t}$ for all $t \ge 1$. 
By Krull's intersection theorem, this implies $f \in J(J^n \cap I)$.  Thus, $J^{n+1} \cap I \subseteq J(J^n \cap I)$. Since $J^{n+1} \cap I \supseteq J(J^n \cap I)$, we get $J^{n+1} \cap I = J(J^n \cap I)$ for $n \ge d(\ini(I))$. The proof is now complete. 
\end{proof}

\begin{remark} 
We can also characterize the Artin-Rees number $\arn_J(I)$ in terms of the Rees algebra
$\Re_J(R) := \oplus_{n \ge 0}J^n$. 
Let $Q = \oplus_{n \ge 0}(J^n \cap I)$, which is a graded ideal of $\Re_J(R)$. Note that $\Re_J(R)$ is a standard graded ring. Then $d(Q)$ is the least number $c$ such that $J^{n+1} \cap I  = J(J^n \cap I)$ for $n \ge c$.
Therefore, $\arn_J(I) = d(Q)$.
\end{remark}

The estimation of the maximal degree of the minimal generators of a graded ideal is in general difficult.
It often leads to the computation of the Castelnuovo-Mumford regularity, which is defined as follows.
 
Let $S$ be a finitely generated standard graded algebra over a commutative ring $A$. Let $H^i_{S_+}(S)$ denote the $i$-th local cohomology module of $S$ with respect to the graded ideal $S_+$ of elements of positive degree and set $a_i(S) := \max\{n| H^i_{S+}(S)_n =0\}$ with the convention $a_i(S) = -\infty$ if $H^i_{S+}(S) = 0$. Then one defines
$$\reg(S) := \max\{a_i(S) + i| i \ge 0\}.$$
We refer the reader to \cite{Tr3} for basic facts on the Castelnuovo-Mumford regularity of standard graded algebras over a commutative ring. Note that other references usually deal with standard graded rings over a field or an artinian local rings.

If we represent $S$ as a quotient ring $A[X]/P$ of a polynomial ring over $A$, then the maximum degree $d(P)$ of a graded minimal generating set of $P$ is an invariant of $S$, which is called the {\it relation type} of $S$ and denoted by $\rel(S)$ \cite{PV}. It follows from \cite[Corollary 2.6]{Tr3} that $\rel(S) \le \reg(S)+1$.

The following bound for the Artin-Rees number was obtained by combining results of Planas-Vilanova on a uniform Artin-Rees property \cite{PV1} and of Ooishi on the Castelnuovo-Mumford regularity of the Rees algebra \cite{O} in \cite[Theorems 2.7 and 2.8]{MQS}. This result can be easily explained by means of initial ideal.

\begin{corollary} \label{reg}
$\arn_J(I) \le \reg(\gr_J(R/I))+1$.
\end{corollary}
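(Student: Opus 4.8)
The plan is to reduce the statement to a purely graded‑algebra inequality and then feed it to the regularity machinery for standard graded algebras over a commutative base. By Proposition \ref{ar} we have $\arn_J(I) = d(\ini(I))$, and by Lemma \ref{repre} we have $\gr_J(R/I) \cong S/\ini(I)$, where $S := \gr_J(R)$ is standard graded with $S_0 = R/J$. Writing $Q := \ini(I)$, the assertion $\arn_J(I) \le \reg(\gr_J(R/I))+1$ becomes the general inequality
$$d(Q) \le \reg(S/Q)+1$$
for an arbitrary graded ideal $Q$ of a standard graded ring $S$, so it suffices to establish this.

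The key point I would stress is that $d(Q)$ must be expressed through an invariant of $M := S/Q$ itself, not through $\reg(Q)$: the naive chain $d(Q)\le\reg(Q)\le\reg(S/Q)+1$ is unavailable, since $\reg(Q)$ can be as large as $\reg(S)$ even when $d(Q)$ is small (take $Q=S_+$). Instead I would identify the minimal generators of $Q$ with the first syzygies of $M$. First I would choose a polynomial presentation $P := S_0[X_1,\dots,X_m] \twoheadrightarrow S$; because $P_+$ and $S_+$ generate ideals with the same radical action on $S$‑modules, the local cohomology modules, and hence $\reg$, are unchanged when $M$ is viewed over $P$. Using standardness, $(P_+Q)_n = P_1Q_{n-1}$, so $(Q/P_+Q)_n = Q_n/P_1Q_{n-1}$ and therefore $d(Q)$ equals the top nonvanishing degree of $Q\otimes_P S_0$. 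From the exact sequence $0\to Q\to P\to M\to 0$, together with $P\otimes_P S_0 = S_0$ concentrated in degree $0$, I would then identify $Q\otimes_P S_0$ with $\Tor_1^P(M,S_0)$ in positive degrees, so that $d(Q)$ is exactly the top degree of $\Tor_1^P(M,S_0)$.

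With this identification the inequality follows from the regularity bound on Koszul homology over a commutative base: for every finitely generated graded $P$‑module $N$ one has $t_i(N) := \max\{\,n : \Tor_i^P(N,S_0)_n \ne 0\,\} \le \reg(N)+i$, the $a_i$ packaged into $\reg$ controlling the degrees of the Koszul homology $H_i(\underline X;N)$ (over a field this is the familiar Betti‑number reading of regularity). Taking $i=1$ and $N=M$ gives $d(Q) = t_1(M) \le \reg(M)+1 = \reg(S/Q)+1$, as desired.

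The main obstacle is precisely this last ingredient over the non‑field base $S_0 = R/J$: minimal graded free resolutions, and hence graded Betti numbers, are no longer available, so the comparison $t_i(N)\le\reg(N)+i$ cannot simply be read off a resolution and must be derived from the local cohomology definition of $\reg$ via Koszul–local cohomology duality, as developed in \cite{Tr3}. Once this is in place the corollary is immediate; alternatively one may simply cite the combination of \cite{PV} and \cite{O} recorded in \cite{MQS}, the content of the present argument being the clean reinterpretation $\arn_J(I)=d(\ini(I))$ together with the syzygy identity $d(\ini(I)) = t_1(\gr_J(R/I))$.
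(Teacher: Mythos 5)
Your overall strategy coincides with the paper's: combine Proposition \ref{ar} with Lemma \ref{repre}, pass to a polynomial presentation $P=S_0[X_1,\dots,X_m]\twoheadrightarrow S=\gr_J(R)$ over the base $S_0=R/J$, and invoke the generator-degree versus regularity comparison there (the paper, too, simply quotes this as well known, resting on the theory in \cite{Tr3}). However, there is a concrete false step at the pivot of your argument: the sequence $0\to Q\to P\to M\to 0$ is not exact, because $Q=\ini(I)$ is an ideal of $S$, not of $P$. The kernel of $P\twoheadrightarrow M=S/Q$ is the preimage $\widetilde{Q}$ of $Q$ in $P$, which also contains the defining ideal $\ker(P\to S)$. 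Consequently $\Tor_1^P(M,S_0)$ records the minimal generators of $\widetilde{Q}$, not of $Q$, and your ``syzygy identity'' $d(Q)=t_1^P(M)$ is false in general. For instance, take $P=k[x,y]$, $S=k[x,y]/(x^5)$, $Q=yS$, so $M\cong k[x]/(x^5)$: then $d(Q)=1$, while $\widetilde{Q}=(y,x^5)$ needs a minimal generator in degree $5$, so $t_1^P(M)=5$ (consistent with $\reg(M)+1=5$, but far from $d(Q)$). The trap is genuine: over $S$ itself the identity $d(Q)=\max\{n\ge 1: \Tor_1^S(M,S_0)_n\neq 0\}$ is correct, but there no regularity bound on $\Tor$ is available since $S$ is neither regular nor Koszul; over $P$ the bound is available, but the identity degrades to an inequality.

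Fortunately the inequality is all you need, and it does hold: since any generating set of $\widetilde{Q}$ maps onto a generating set of $Q=\widetilde{Q}/\ker(P\to S)$, graded Nakayama gives $d(Q)\le d(\widetilde{Q})$. With this one-line repair your proof closes: $\arn_J(I)=d(Q)\le d(\widetilde{Q})\le \reg_P(\widetilde{Q})=\reg(P/\widetilde{Q})+1=\reg(\gr_J(R/I))+1$; note that this only requires the $t_0$-bound (Mumford's generator bound over a Noetherian base) applied to the ideal $\widetilde{Q}$, together with the local cohomology exact sequence, rather than a general $t_i(N)\le\reg(N)+i$. The repaired argument is then exactly the paper's proof, which writes $\gr_J(R)=A[X]/Q_0$ with $A=R/J$, lifts $\ini(I)$ to the ideal $P_0\subseteq A[X]$ with $\ini(I)=P_0/Q_0$, and estimates $d(\ini(I))\le d(P_0)\le \reg P_0=\reg(A[X]/P_0)+1$. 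So your route is not genuinely different; only the packaging through $\Tor_1$ is, and that packaging is precisely where the claimed equality must be weakened to the inequality that the paper states directly.
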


\begin{proof}
Let $A = R/J$. Let $\gr_J(R) = A[X]/Q$, where $A[X]$ is a polynomial ring over $A$ and $Q$ a homogeneous ideal in $A[X]$. By Lemma \ref{repre}, $\gr_J(R/I) \cong \gr_J(R)/\ini(I).$ 
Let $P$ be the ideal of $A[X]$ such that $\ini(I) = P/Q$. Then $\gr_J(R/I) = A[X]/P$. By Proposition \ref{ar}, $\arn_J(I) = d(\ini(I)) \le d(P) = \rel(\gr_J(R/I)) \le \reg (\gr_J(R/I))+1$. 
\end{proof}

The following fact will be useful when dealing with the Artin-Rees number of ideals in quotient rings.

\begin{lemma} \label{qar}
Let $K$ be an ideal in $I$. Let $\bar J = (J+K)/K$ and $\bar I = I/K$. Then 
$$\arn_{\bar J}(\bar I) \le \arn_J(I).$$
\end{lemma}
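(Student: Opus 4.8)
The plan is to translate the statement into a comparison of the maximal generator degrees of initial ideals, where Lemma \ref{qin} and Proposition \ref{ar} do almost all of the work. By Proposition \ref{ar} applied in the local ring $\bar R = R/K$, the number $\arn_{\bar J}(\bar I)$ equals $d$ of the initial ideal of $\bar I$ in $\gr_{\bar J}(\bar R)$, while in $R$ we have $\arn_J(I) = d(\ini(I))$. So it suffices to identify the relevant initial ideal in $\bar R$ and then to compare maximal generator degrees.

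First I would check that $\gr_{\bar J}(\bar R)$ is literally the ring $\gr_J(R/K)$ appearing in Lemma \ref{qin}. For this I compute $\bar J^n$: since $\bar J = (J+K)/K$, we have $\bar J^n = ((J+K)^n + K)/K$, and because the term $J^n$ is the only summand of $(J+K)^n = \sum_{i=0}^n J^iK^{n-i}$ not contained in $K$, this simplifies to $\bar J^n = (J^n+K)/K$. Hence $\bar J^n/\bar J^{n+1} = (J^n+K)/(J^{n+1}+K)$, i.e. $\gr_{\bar J}(\bar R) = \gr_J(R/K)$, and the initial ideal of $\bar I = I/K$ in it is exactly $\ini(I/K)$ from Lemma \ref{qin}. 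Combining Proposition \ref{ar} (in $\bar R$) with Lemma \ref{qin} gives
\[
\arn_{\bar J}(\bar I) = d\big(\ini(I/K)\big) = d\big(\ini(I)/\ini(K)\big),
\]
so the lemma reduces to the purely graded inequality $d(\ini(I)/\ini(K)) \le d(\ini(I))$, noting that $\ini(K) \subseteq \ini(I)$ since $K \subseteq I$.

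This last inequality is the only point that needs a genuine, if short, argument. Set $S = \gr_J(R)$, $Q = \ini(I)$ and $Q' = \ini(K) \subseteq Q$. Since $S$ is standard graded over the local ring $S_0 = R/J$, graded Nakayama applies: the maximal degree of a graded minimal generating set of a finitely generated graded module equals the top degree in which it is nonzero modulo the graded maximal ideal, and is therefore bounded above by the top degree of \emph{any} homogeneous generating set. The images in $S/Q'$ of a minimal generating set of $Q$ form a homogeneous generating set of $Q/Q'$ with degrees at most $d(Q)$, whence $d(Q/Q') \le d(Q)$. This yields $\arn_{\bar J}(\bar I) \le \arn_J(I)$.

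The main thing to be careful about is precisely this generator-degree comparison across the surjection $Q \to Q/Q'$: one must invoke the (${}^*$)local structure of $S_0 = R/J$ so that $d(\,\cdot\,)$ is well defined and behaves monotonically under quotients, since no such estimate can come from comparing Hilbert functions alone. As an alternative that avoids this machinery, the statement can be proved by the direct computation $\bar J^{n+1}\cap\bar I = \bar J(\bar J^n\cap\bar I)$ for all $n \ge \arn_J(I)$: using $K \subseteq I$ together with the modular law one gets $\bar J^n\cap\bar I = ((J^n\cap I)+K)/K$, and then both sides reduce to $(J(J^n\cap I)+K)/K$ once the cross terms $JK$ and $K(J^n\cap I)$ are absorbed into $K$.
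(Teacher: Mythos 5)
Your proposal is correct, but your primary route is genuinely different from the paper's. The paper proves the lemma in two lines by direct computation: for $n \ge \arn_J(I)$, using $K \subseteq I$ and the modular law,
$$(J^{n+1}+K) \cap I = J^{n+1} \cap I + K = J(J^n \cap I) + K = (J+K)\bigl((J^n+K) \cap I\bigr) + K,$$
which reads as $\bar J^{n+1} \cap \bar I = \bar J(\bar J^n \cap \bar I)$ in $R/K$ --- this is exactly the alternative you sketch in your final paragraph, so you have in effect reproduced the paper's argument as a remark. Your main argument instead routes through Proposition \ref{ar} applied in $\bar R = R/K$, the identification $\gr_{\bar J}(\bar R) = \gr_J(R/K)$ (your verification that $\bar J^n = (J^n+K)/K$ is the right point to check), Lemma \ref{qin} to get $\arn_{\bar J}(\bar I) = d(\ini(I)/\ini(K))$, and finally the monotonicity $d(Q/Q') \le d(Q)$ via graded Nakayama over the local base ring $S_0 = R/J$. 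All steps are sound; note that the last inequality can even be obtained without Nakayama, since $Q_{n+1} = S_1 Q_n$ for $n \ge d(Q)$ passes to images under $S \to S/Q'$, giving $(Q/Q')_{n+1} = (S/Q')_1 (Q/Q')_n$. What your route buys is a conceptual placement of the lemma as a formal consequence of the two preceding results, plus the sharper identification of $\arn_{\bar J}(\bar I)$ as the generating degree of $\ini(I)/\ini(K)$; what it costs is reliance on Proposition \ref{ar} (hence Krull intersection in $\bar R$) and the graded machinery, where the paper's computation is elementary, self-contained, and independent of the order of the lemmas --- which matters since the paper places Lemma \ref{qar} after Proposition \ref{ar} but uses only the definition of the Artin--Rees number.
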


\begin{proof}
For $n \ge \arn_J(I)$ we have 
$$(J^{n+1}+K) \cap I = J^{n+1} \cap I + K = J(J^n \cap I) + K = (J + K)((J^n + K) \cap I) + K,$$
This can be rewritten as $\bar J^{n+1}  \cap \bar I = \bar J(\bar J^n \cap \bar I)$.
Hence, $\arn_{\bar J}(\bar I) \le \arn_J(I)$.
\end{proof}

For a sequence of elements $f_1,...,f_r$ in $R$ we will use the shorter notations $\ini(f_1,...,f_r)$ and $\arn_J(f_1,...,f_r)$ instead of $\ini((f_1,...,f_r))$ and $\arn_J((f_1,...,f_r))$.


\section{Perturbation of filter-regular sequences}

Let $(R,\fm)$ be a local ring and $J$ an arbitrary ideal of $R$.
A sequence of elements $f_1,...,f_r$ in $R$ is called {\em $J$-filter regular} if $f_i \not\in \fp$ for any associated prime $\fp \not\supseteq J$ of $(f_1,...,f_{i-1})$, $i = 1,....,r$. 
This notion is introduced in \cite{Tr1} and has its origin in the theory of generalized Cohen-Macaulay rings \cite{SCT}. If $\sqrt{J} = \fm$, one uses the term filter regular sequence.

An element $f  \in R$ is called $J$-filter regular if $f$ is a $J$-filter regular sequence of one element, i.e.
$f \not\in \fp$ for any associated prime $\fp \not\supseteq J$ of $R$. It is easy seen that this condition is satisfied if and only if there exists a number $n$ such that $J^n(0:f) = 0$. 

Let $I$ be an ideal of $R$. Let $\bar f$ denote the residue class of $f$ in $R/I$ and $\bar J = (J+I)/I$.
For convenience we say that $f$ is a $J$-filter regular element in a quotient ring $R/I$ if $\bar f$ 
is a $\bar J$-filter regular element. It is clear that $f_1,...,f_r$ is a $J$-filter regular sequence if and only if $f_i$ is a $J$-filter regular element in $R/(f_1,...,f_{i-1})$, $i = 1,...,r$. Unlike a regular sequence, $J$-filter regular sequence is not permutable, i.e. it depends on the order of the elements in the sequence.

The following examples give large classes of $J$-filter regular sequences. \medskip

\begin{example} ~\par
\begin{enumerate}[(1)]
\item Let $J = \prod_{i=0}^{d-1}\Ann(H_\fm^i(R))$. Then every system of parameters of $R$ is a $J$-filter regular sequence  \cite[Proposition 4]{Sc}. 
Note that $R$ is called a generalized Cohen-Macaulay ring if 
$\sqrt{J} \supseteq \fm$. One can use $J$-filter regular sequences to study the structure of local cohomology modules \cite{ASc,DQ,NS}.
\item Let $J = (f_1,...,f_r)$, where  $f_1,...,f_r$ is a $d$-sequence, i.e. $(f_1,...,f_{i-1}):f_if_t = (f_1,...,f_{i-1}):f_i$ for all $t \ge i$, and $i = 1,...,r$  \cite{Hu}. Then $f_1,...,f_r$ is a $J$-filter regular sequence \cite{Tr1}. A special case of $d$-sequence is the $p$-standard sequence, which was used to construct Macaulayfication of Noetherian schemes \cite{CC,Ka}.
\end{enumerate}
\end{example}

In the following we will prove Theorem \ref{intro}. 
By Lemma \ref{repre}, it suffices to show that $\ini(I) = \ini(I')$. 
This will be done by induction on the length of the filter-regular sequence $f_1,...,f_r$.
After proving the case $r=1$ we need to show that $f_1',f_2,...,f_r$ is also a filter-regular sequence with 
$\ini(I) = \ini(f_1',f_2,...,f_r)$. Then we can apply the induction hypothesis to the ring $R/(f_1')$ in order to obtain 
$\ini(I) = \ini(I')$.

First, we have to study the case $r = 1$.
For any $R$-module $M$, we define 
$$a_J(M) := \inf\{n|\  J^nM = 0\}.$$
Notice that $a_J(M) = \infty$ if $J^nM \neq 0$ for all $n \ge 0$. This notion is similar to the Loewy length of $M$, which is just $a_{\fm}(M)$. We call $a_J(M)$ the $J$-{\it Loewy length} of $M$. 
By this notation, $f$ is a regular element if and only if $a_J(0:f) = 0$ and $f$ is a
$J$-filter regular element if and only if $a_J(0:f) < \infty$. \par

\begin{proposition} \label{one}
Let $f$ be a $J$-filter regular element. Set 
$$c = \max\{a_J(0:f), \arn_J(f)+1\}.$$
Let $f' = f +\e$, where $\e$ is an arbitrary element in $J^c$. Then 
\begin{enumerate}[{\rm (i)}]
\item $f'$ is a $J$-filter regular element with $0: f' =  0:f$.
\item $\ini(f') = \ini(f)$.
\end{enumerate}
\end{proposition}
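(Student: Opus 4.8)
The plan is to funnel both statements through a single fact about colon ideals, namely that $(J^n:f)=(J^n:f')$ for every $n\ge 0$, and then read off (i) and (ii) from it. As orientation, note first that $o(f)\le\arn_J(f)<c$, so $\e\in J^c$ has strictly larger order than $f$; hence $o(f')=o(f)$ and $f,f'$ share the same initial form. The real content is therefore not about this one generator but about the initial forms of \emph{all} multiples $gf$, i.e. about the whole ideals $\ini(f)$ and $\ini(f')$, and this is exactly what the colon equality controls.

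The engine of the argument is the claim that, for every $n$ and every $g$ with $gf\in J^n$, one has $g\e\in J^{n+1}$. To prove it I would split on the size of $n$. When $n\le c-1$ there is nothing to do, since $\e\in J^c\subseteq J^{n+1}$ already gives $g\e\in J^{n+1}$ for any $g$. When $n\ge\arn_J(f)$, I would invoke the Artin--Rees equality $J^n\cap(f)=J^{n-\arn_J(f)}(J^{\arn_J(f)}\cap(f))$ to peel a factor of high order off $gf$, producing a decomposition $g=h+w$ with $h\in J^{n-\arn_J(f)}(J^{\arn_J(f)}:f)$ and $w\in 0:f$. Then $w\e=0$ because $c\ge a_J(0:f)$ forces $J^c(0:f)=0$, while $h\e\in J^{n-\arn_J(f)+c}\subseteq J^{n+1}$ precisely because $c\ge\arn_J(f)+1$. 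Since $\arn_J(f)\le c-1$, the two ranges $n\le c-1$ and $n\ge\arn_J(f)$ together exhaust all $n$.

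From the engine the colon equality follows in two moves. The inclusion $(J^n:f)\subseteq(J^n:f')$ is immediate: if $gf\in J^n$ then $gf'=gf+g\e\in J^n$. For the reverse inclusion I would argue by induction on $n$: given $g\in(J^n:f')$, we have $g\in(J^{n-1}:f')\subseteq(J^{n-1}:f)$ by the inductive hypothesis, so $gf\in J^{n-1}$; the engine at level $n-1$ then yields $g\e\in J^n$, and therefore $gf=gf'-g\e\in J^n$. This reverse inclusion is the one delicate point, and I expect it to be the main obstacle: the naive route would instead apply the whole proposition symmetrically to $f'=f+(-\e)$, which requires $\arn_J(f')\le c-1$ in advance --- precisely the information one is trying to produce. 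Breaking this circularity is exactly what the degree-by-degree induction does: it only ever calls on the engine for $f$ (hence only on $\arn_J(f)$ and $a_J(0:f)$) and transfers colon membership from $f'$ back to $f$ one graded level at a time.

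With $(J^n:f)=(J^n:f')$ in hand, both conclusions drop out. For (i), Krull's intersection theorem identifies $0:f$ with $\bigcap_n(J^n:f)$ and likewise $0:f'$ with $\bigcap_n(J^n:f')$, so the colon equality gives $0:f'=0:f$; consequently $a_J(0:f')=a_J(0:f)<\infty$, i.e. $f'$ is again $J$-filter regular. For (ii), I would write each graded piece as $\ini(f)_n=((J^n:f)f+J^{n+1})/J^{n+1}$ and, using the colon equality, $\ini(f')_n=((J^n:f)f'+J^{n+1})/J^{n+1}$; since the engine gives $g\e\in J^{n+1}$ for every $g\in(J^n:f)$, the generators $gf$ and $gf'$ coincide modulo $J^{n+1}$, so the two pieces agree for all $n$ and $\ini(f)=\ini(f')$.
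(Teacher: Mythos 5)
Your proof is correct, and its engine is the paper's own computation in different clothing: the paper likewise writes $gf \in (f)\cap J^n = fJ^{n-c+1}(J^{c-1}:f)$ by Artin--Rees, splits $g$ into a high-order part plus an element of $0\!:\!f$, and uses $J^c(0\!:\!f)=0$ to conclude $g\e\in J^{n+1}$ --- the same decomposition and the same role for the two constants. Where you diverge is the superstructure. The paper never introduces the colon ideals $(J^n:f)$: it fixes $g$, sets $n=o(gf)$, and shows by contradiction that $o(g\e)>o(gf)$, hence $(gf')^*=(gf)^*$ for every $g\in R$. Since the elements of $(f)$ and of $(f')$ are parametrized by the same $g$'s, this single statement gives both inclusions $\ini(f)\subseteq\ini(f')$ and $\ini(f')\subseteq\ini(f)$ simultaneously, and also $0\!:\!f'=0\!:\!f$ (because $gf=0$ iff $(gf)^*=0$); no induction is needed, and the circularity you identified never arises in that formulation. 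Your route, by contrast, genuinely needs the degree-by-degree induction, precisely because your engine only controls multiples of $f$ while the reverse colon inclusion concerns multiples of $f'$; the induction is a correct and clean way to break that asymmetry. The trade-off: the paper's formulation is shorter and gets (i) and (ii) in one stroke, while yours isolates a reusable intermediate statement, the level-by-level equality $(J^n:f)=(J^n:f')$ for all $n$ (which the paper's argument also implies, via $o(gf)=o(gf')$, but never records), from which both conclusions are mechanical consequences.
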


\begin{proof} 
We will first show (ii). For that it suffices to show that $(gf)^* = (gf')^*$ for any element $g \in R$. 
If $gf = 0$, $g \in 0:f$. Since $J^c(0:f) = 0$, $g \varepsilon = 0$. Hence, $gf' = g(f+ \e) = 0$. 
In this case, we have $(gf)^* = (gf')^* = 0$. 
Therefore, we may assume that $gf \neq 0$. \par

Let $n = o(gf)$. 
If $n \ge o(g \varepsilon)$, we have $n \ge o(\e) \ge c > \arn_J(f).$ Therefore,
$$gf \in (f) \cap J^n = J^{n-c+1} (J^{c-1} \cap (f)) = fJ^{n-c+1} (J^{c-1} : f).$$
It follows that $g \in J^{n-c+1} + (0:f)$. Hence, 
$$g\e \in \e J^{n-c+1} + \e (0:f) \subseteq J^{n+1} + J^c(0:f) = J^{n+1}.$$
Thus, $o(g\e) \ge n+1$, which contradicts the assumption $n \ge o (g \varepsilon)$. 
So we have $o(gf) = n < o (g\varepsilon)$, which implies  
$(gf')^* = (g(f+ \varepsilon))^* = (gf)^*$. \par

As a consequence, $(gf')^* = 0$ if and only if $(fg)^* = 0$.
This means $gf' = 0$ if and only if $gf = 0$. Therefore, $g \in 0:f'$ if and only if $g \in 0:f$.
So we obtain $0:f' = 0:f$. Since $f$ is $J$-filter regular, $a_J(0:f')  = a_J(0:f) \le \infty$.
Hence, $f'$ is also $J$-filter regular. So we obtain (i). 
\end{proof}

The next step is to replace $f_1,f_2$ by $f_1',f_2$ in the case $r=2$.
For that we need the following lemma. 

\begin{lemma}\label{exchange}
For any pair of elements $f_1, f_2 \in R$, there exists an isomorphism
$$\frac{(f_1):f_2}{(f_1) + 0:f_2} \cong \frac{(f_2):f_1}{(f_2) + 0:f_1}.$$
\end{lemma}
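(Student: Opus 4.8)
The plan is to construct an explicit $R$-linear map realizing the isomorphism and then invoke the first isomorphism theorem. First I would define a map $\phi : (f_1):f_2 \to \bigl((f_2):f_1\bigr)/\bigl((f_2)+0:f_1\bigr)$ as follows. Given $x \in (f_1):f_2$, by definition $xf_2 \in (f_1)$, so there is an element $y \in R$ with $xf_2 = yf_1$; since $yf_1 = xf_2 \in (f_2)$, this $y$ lies in $(f_2):f_1$, and I set $\phi(x) = \bar y$, the residue class of $y$ modulo $(f_2)+0:f_1$.

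The first thing to verify is that $\phi$ is well defined, and this is where one must be slightly careful. The element $y$ is not determined by $x$: if $xf_2 = yf_1 = y'f_1$, then $(y-y')f_1 = 0$, so $y$ is unique only modulo $0:f_1$. Since $0:f_1 \subseteq (f_2)+0:f_1$, the class $\bar y$ is nonetheless unambiguous, and $R$-linearity is immediate from the construction.

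Next I would compute the kernel, which is the crux of the argument. I claim $\ker \phi = (f_1)+0:f_2$. For the inclusion $\supseteq$, if $x = af_1+b$ with $b \in 0:f_2$, then $xf_2 = af_1f_2 = (af_2)f_1$, so one may take $y = af_2 \in (f_2)$ and hence $\bar y = 0$. For $\subseteq$, suppose $\phi(x) = 0$, so $y = zf_2 + w$ with $w \in 0:f_1$; then $xf_2 = yf_1 = zf_1f_2$, whence $(x-zf_1)f_2 = 0$ and therefore $x \in (f_1)+0:f_2$. Finally, surjectivity is direct: any $y \in (f_2):f_1$ satisfies $yf_1 \in (f_2)$, say $yf_1 = xf_2$, and then $x \in (f_1):f_2$ with $\phi(x) = \bar y$. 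The first isomorphism theorem then yields the stated isomorphism.

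The construction is symmetric in $f_1$ and $f_2$ by design, which is precisely what makes the two quotients isomorphic; there is no genuinely hard step here, only the bookkeeping of well-definedness and the kernel computation. I would present the kernel computation as the main point, since that is the only place where the denominators $(f_1)+0:f_2$ and $(f_2)+0:f_1$ enter in an essential way.
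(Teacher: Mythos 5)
Your proof is correct and follows essentially the same route as the paper: both construct the map $x \mapsto \bar y$ determined by $xf_2 = yf_1$, check well-definedness modulo $0:f_1$, identify the kernel as $(f_1)+0:f_2$, and conclude by surjectivity and the first isomorphism theorem. The only difference is cosmetic — you spell out the surjectivity and the $\supseteq$ inclusion of the kernel, which the paper leaves as immediate.
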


\begin{proof}
For every $x \in (f_1) : f_2$ we have $f_2x  = f_1y$ for some element $y \in (f_2):f_1$. 
Sending $x$ to $y$ induces a surjective map
$$\varphi:\, (f_1): f_2 \to \frac{(f_2):f_1}{(f_2) + 0:f_1}.$$
It is easy to check that $\mathrm{ker}(\varphi) = (f_1) + 0:f_2$, which implies the conclusion.
\end{proof}

\begin{proposition}\label{two}
Let $f_1,f_2$ be a $J$-filter regular sequence. Let $a_1 = a_J(0:f_1)$ and $a_2 = a_J((f_1):f_2/(f_1))$. Set
$$c = \max\{a_1+ a_2, \arn_J(f_1), \arn_J(f_1,f_2)\}+1$$
Let $f_1' = f_1 +\e$, where $\e$ is an arbitrary element in $J^c$. 
Then 
\begin{enumerate}[{\rm (i)}]
\item $f_1',f_2$ is a $J$-filter regular sequence
with $a_J(0:f_1') = a_1$ and 
$a_J((f_1'):f_2/(f_1')) \le 2a_2.$
\item $\ini(f_1',f_2) = \ini(f_1,f_2)$.
\end{enumerate} 
\end{proposition}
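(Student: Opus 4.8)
The plan is to dispose of everything concerning $f_1'$ alone by reducing to Proposition \ref{one}, and then to attack the two genuinely two-variable assertions. Since $c\ge\max\{a_1,\arn_J(f_1)+1\}$, Proposition \ref{one} applies to $f_1$ and $\e\in J^c$ and gives at once that $f_1'$ is $J$-filter regular with $0:f_1'=0:f_1$ (hence $a_J(0:f_1')=a_1$) and that $\ini(f_1')=\ini(f_1)$; by Proposition \ref{ar} the latter also yields $\arn_J(f_1')=\arn_J(f_1)$. What remains is (a) the colon estimate asserting that $f_1',f_2$ is $J$-filter regular with $a_J((f_1'):f_2/(f_1'))\le 2a_2$, and (b) the equality $\ini(f_1',f_2)=\ini(f_1,f_2)$, from which (ii) is immediate.

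First I would record the cheap half of (b). For arbitrary $a,b\in R$ the elements $af_1+bf_2$ and $af_1'+bf_2$ differ by $a\e$, and $a\e\in J^c$ for \emph{every} $a$ since $\e\in J^c$. Consequently, in each degree $n<c$ one has $J^n\cap(f_1,f_2)+J^{n+1}=J^n\cap(f_1',f_2)+J^{n+1}$ (the inclusion $\supseteq$ using $f_1=f_1'-\e$), so by Lemma \ref{repre} the ideals $\ini(f_1,f_2)$ and $\ini(f_1',f_2)$ have the same graded pieces in all degrees $<c$. As $d(\ini(f_1,f_2))=\arn_J(f_1,f_2)<c$ by Proposition \ref{ar}, the ideal $\ini(f_1,f_2)$ is generated in degrees $<c$, whence $\ini(f_1,f_2)\subseteq\ini(f_1',f_2)$.

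The technical heart is the colon estimate (a), and this is exactly where Lemma \ref{exchange} is designed to help. Perturbing $f_1$ spoils $(f_1):f_2$, but the isomorphism
$$\frac{(f_1):f_2}{(f_1)+0:f_2}\cong\frac{(f_2):f_1}{(f_2)+0:f_1}$$
trades it for a colon \emph{into} the fixed ideal $(f_2)$, that is, for colon data in the single element $f_1$, on which the replacement of $f_1$ by $f_1'$ acts in the controlled fashion of Proposition \ref{one}. Concretely, I would start from the defining bound $J^{a_2}((f_1):f_2)\subseteq(f_1)+0:f_2$, run the order argument of Proposition \ref{one} (using $o(\e)\ge c>\arn_J(f_1)$ and $c>\arn_J(f_1,f_2)$ to rewrite the intersections $(f_i)\cap J^n$ via Artin--Rees), and absorb the error terms produced by $\e$ using the $a_1$-annihilator of $0:f_1=0:f_1'$ together with a second passage through the exchange isomorphism. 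The outcome is $J^{2a_2}((f_1'):f_2)\subseteq(f_1')$, which proves simultaneously that $f_1',f_2$ is $J$-filter regular and yields $a_J((f_1'):f_2/(f_1'))\le 2a_2$; the doubling of $a_2$ is the visible trace of applying the estimate twice, once to enter and once to leave the exchange isomorphism.

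It then remains to prove the reverse inclusion $\ini(f_1',f_2)\subseteq\ini(f_1,f_2)$. Here I would invoke Lemma \ref{qin} to identify $\ini(f_1,f_2)/\ini(f_1)$ and $\ini(f_1',f_2)/\ini(f_1')$ with the initial ideals of the \emph{principal} ideal generated by the image of $f_2$ in $R/(f_1)$, respectively $R/(f_1')$. Since $\ini(f_1')=\ini(f_1)$, the two ambient graded rings are identified with $\gr_J(R)/\ini(f_1)$, and the problem becomes: for every $h\in R$, the initial form of $hf_2$ computed modulo $(f_1)$ agrees with the one computed modulo $(f_1')$. This is a Proposition \ref{one}-type statement for the single element $f_2$ across the two quotients, and it follows from the colon control of (a): the only obstruction to equality sits in $(f_1'):f_2$ versus $(f_1):f_2$ and in $0:f_1=0:f_1'$, both annihilated by $J^{2a_2}$, while the choice $c-1\ge a_1+a_2$ makes $\e$ deep enough for the two orders to coincide, with Krull's intersection theorem clearing the residual high-order terms. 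I expect this colon control, and its reuse in the reverse inclusion, to be the main obstacle: unlike the single-element situation of Proposition \ref{one}, the perturbation of $f_1$ now couples to the untouched element $f_2$ through the syzygies of the pair, and it is precisely Lemma \ref{exchange} that linearizes this coupling and renders the bound $2a_2$ accessible.
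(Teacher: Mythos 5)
Your reductions to Proposition \ref{one} for everything involving $f_1'$ alone are correct, and your degree-counting argument for the inclusion $\ini(f_1,f_2)\subseteq\ini(f_1',f_2)$ (same graded pieces in degrees $<c$, plus $d(\ini(f_1,f_2))=\arn_J(f_1,f_2)<c$ from Proposition \ref{ar}) is valid and is in fact not in the paper. But the two assertions that carry the substance of the proposition --- the bound $a_J((f_1'):f_2/(f_1'))\le 2a_2$ and the reverse inclusion $\ini(f_1',f_2)\subseteq\ini(f_1,f_2)$ --- are asserted rather than proved, and your outline for them has concrete defects. For the colon bound: Lemma \ref{exchange} controls $(f_1'):f_2$ only \emph{modulo} $(f_1')+0:f_2$. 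Running it (together with Proposition \ref{one} applied to $f_1,f_1'$ in $\bar R=R/(f_2)$, whose hypotheses $a_J((f_2):f_1/(f_2))\le a_1+a_2$ and $\arn_{J\bar R}(f_1\bar R)\le\arn_J(f_1,f_2)<c$ must themselves be verified via Lemma \ref{exchange} and Lemma \ref{qar} --- you gesture at this but never check it) yields at best $J^{a_2}((f_1'):f_2)\subseteq (f_1')+0:f_2$. You still must prove the second containment $J^{a_2}\bigl((f_1')+0:f_2\bigr)\subseteq(f_1')$, i.e.\ bound the module $((f_1')+0:f_2)/(f_1')$, which the exchange isomorphism does not see at all. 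This is where the paper works hardest: it reduces that piece to the identity $f_1'(0:f_1f_2)=f_1(0:f_1f_2)$, proved by Nakayama's lemma using $\e\in J^{a_1+a_2+1}$, $J^{a_1}(0:f_1)=0$ and $J^{a_2}((f_1):f_2)\subseteq(f_1)$. Your proposal never identifies this piece, and your explanation of the factor $2$ (``once to enter and once to leave the exchange isomorphism'') misplaces it: passing through an isomorphism costs nothing; the doubling comes from composing the two containments above, each worth $a_2$.

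The reverse inclusion of (ii) is also set up with the wrong decomposition. You propose to quotient by the \emph{perturbed} element and compare the initial ideal of the image of $f_2$ in $R/(f_1)$ with that in $R/(f_1')$. These are different rings; Proposition \ref{one} perturbs an element inside a fixed ring and says nothing about this situation, and the claim that the comparison ``follows from the colon control'' is unsubstantiated: even after identifying the two associated graded rings with $\gr_J(R)/\ini(f_1)=\gr_J(R)/\ini(f_1')$, the initial forms of $hf_2$ computed modulo $(f_1)$ and modulo $(f_1')$ are classes of different representatives whose difference lies in $(f_1,\e)\cap J^n$, which is not visibly in $J^n\cap(f_1)+J^{n+1}$. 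The working decomposition is the opposite one: quotient by the \emph{fixed} element $(f_2)$, so that the perturbation $f_1\mapsto f_1'$ happens inside the single ring $\bar R=R/(f_2)$; then Proposition \ref{one}(ii) gives $\ini(f_1'\bar R)=\ini(f_1\bar R)$, and Lemma \ref{qin} lifts this to $\ini(f_1',f_2)/\ini(f_2)=\ini(f_1,f_2)/\ini(f_2)$, hence to the desired equality. (Alternatively, your own degree argument would give the reverse inclusion if you knew $\arn_J(f_1',f_2)<c$, but that bound is exactly what you have not established --- in the paper it is a \emph{consequence} of the equality of initial ideals, so assuming it would be circular.) So the gap is genuine: the exchange/quotient machinery must be run through $R/(f_2)$ with its hypotheses checked, and the extra piece $((f_1')+0:f_2)/(f_1')$ must be bounded by a separate argument.
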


\begin{proof}
By Proposition \ref{one}(i), $f_1'$ is a $J$-filter regular element with $0:f_1' = 0:f_1$. Hence, $a_J(0:f_1') = a_1$.
To prove (i) it suffices to show that 
$a_J((f_1'):f_2/(f_1')) \le 2a_2$ because the finiteness of this invariant implies that $f_2$ is a $J$-filter regular element in $R/(f_1')$.\par

By the exact sequence
$$0 \to \frac{(f_1')+0:f_2}{(f_1')} \to \frac{(f_1'):f_2}{(f_1')} \to \frac{(f_1'):f_2}{(f_1')+0:f_2}  \to 0$$
we have
\begin{equation}
a_J((f_1'):f_2/(f_1')) \le a_J((f_1'):f_2/(f_1')+0:f_2) + a_J((f_1')+0:f_2/(f_1')).
\end{equation}

First, we have 
$$\frac{(f_1')+0:f_2}{(f_1')} \cong \frac{0:f_2}{(f_1') \cap (0:f_2)} \cong \frac{0:f_2}{f_1'(0:f_1'f_2)} 
= \frac{0:f_2}{f_1'(0:f_1f_2)} .$$
We will show that $f_1'(0:f_1f_2) = f_1(0:f_1f_2)$.
By Nakayama's lemma, it suffices to prove that  $f_1'(0:f_1f_2) \subseteq f_1(0:f_1f_2)$ and
$f_1(0:f_1f_2) \subseteq f_1'(0:f_1f_2) + \fm f_1(0:f_1f_2)$.
Since $f'_1 = f_1+\e$, we only need to show that  $\e(0:f_1f_2) \subseteq \fm f_1(0:f_1f_2)$.  
Note that $\e \in J^{a_1+a_2+1}$ and $J^{a_1}(0:f_1) = 0$. Then $\e(0:f_1f_2) \subseteq J^{a_2+1}(0:f_2)$.
We have
$$\frac{0:f_2}{f_1(0:f_1f_2)} =  \frac{0:f_2}{(f_1) \cap (0:f_2)} \cong \frac{(f_1)+0:f_2}{(f_1)} \subseteq \frac{(f_1):f_2}{(f_1)}.$$
Since $J^{a_2}((f_1):f_2/(f_1)) = 0$, $J^{a_2}(0:f_2/f_1(0:f_1f_2)) = 0$.
Hence, $J^{a_2+1}(0:f_2) \subseteq \fm f_1(0:f_1f_2)$. So we get $f_1'(0:f_1f_2) = f_1(0:f_1f_2)$. Therefore,
$$\frac{(f_1')+0:f_2}{(f_1')} \cong \frac{0:f_2}{f_1'(0:f_1'f_2)} = \frac{0:f_2}{f_1'(0:f_1f_2)} = \frac{0:f_2}{f_1(0:f_1f_2)}\subseteq \frac{(f_1):f_2}{(f_1)}.$$
From this it follows that 
\begin{equation}
a_J((f_1')+0:f_2/(f_1')) \le a_J((f_1):f_2/(f_1)) = a_2.
\end{equation}

By Lemma \ref{exchange}, 
$$
\frac{(f_1'):f_2}{(f_1')+0:f_2} \cong \frac{(f_2):f_1'}{(f_2)+0:f_1'} = \frac{(f_2):f_1'}{(f_2)+0:f_1}
$$
We shall see that $(f_2):f_1' = (f_2):f_1$.

Let $\bar R = R/(f_2)$. Since $J^{a_2}((f_1):f_2) \subseteq (f_1) \subseteq (f_1)+0:f_2$, we have 
$J^{a_2}((f_2):f_1) \subseteq (f_2) + 0:f_1$ by Lemma \ref{exchange}.
From this it follows that
$$J^{a_1+a_2}((f_2):f_1) \subseteq (f_2) + J^{a_1}(0:f_1) = (f_2).$$
Therefore, $f_1$ is a $J$-filter regular element in $\bar R$ with 
$$a_{J\bar R}(0_{\bar R}:f_1) = a_J((f_2):f_1/(f_2)) \le a_1+a_2.$$  
By Lemma \ref{qar}, $\arn_{J\bar R}(f_1\bar R) \le \arn_J(f_1,f_2)$.
These facts show that we can apply Proposition \ref{one} to the elements $f_1, f_1'$ in the ring $\bar R = R/(f_2)$. 

By Proposition \ref{one}(i), $(f_2):f_1' = (f_2):f_1$. Hence
$$\frac{(f_1'):f_2}{(f_1')+0:f_2} \cong \frac{(f_2):f_1}{(f_2)+0:f_1} \cong \frac{(f_1):f_2}{(f_1)+0:f_2},$$
which implies
\begin{equation}
a_J((f_1'):f_2/(f_1')+0:f_2) \le a_J((f_1):f_2/(f_1)) = a_2.
\end{equation}
Combining (3.1)-(3,3) we obtain $a_J((f_1'):f_2/(f_1')) \le 2a_2$, which completes the proof of (i).

By Proposition \ref{one}(ii) we have $\ini(f_1'\bar R) = \ini(f_1\bar R)$. By Lemma \ref{qin}, this implies
$$\ini(f_1',f_2)/\ini(f_2) = \ini(f_1,f_2)/\ini(f_2).$$ 
Therefore, $\ini(f_1',f_2) = \ini(f_1,f_2)$.
So we obtain (ii).
\end{proof}

Now we can use induction to study the perturbation of an arbitrary $J$-filter regular sequence. 
The outcome is the following Theorem \ref{main}, which is the main result of this paper. 
What we really have to prove is Theorem \ref{main}(ii). However, the induction hypothesis needs Theorem \ref{main}(i).

Theorem \ref{main}(iii) and Theorem \ref{main}(iv) are just consequences of Theorem \ref{main}(ii). They give affirmative answers to two problems raised by Ma, Quy and Smirnov. The first problem asks for an explicit upper bound for the perturbation index of $\gr_J(R/I)$ \cite[Question 1]{MQS}, while the second problem states that the Artin-Rees number $\arn_J(I)$ is preserved under small perturbation \cite[Paragraph before Corollary 3.6]{MQS}. 

\begin{theorem} \label{main}
Let $J$ be an arbitrary ideal. Let $I = (f_1,...,f_r)$, where $f_1,\ldots, f_r$ is a $J$-filter regular sequence. Let 
$$a_i = a_J((f_1,...,f_{i-1}):f_i/(f_1,...,f_{i-1}))$$
for $i = 1,...,r$. Set 
$$N = \max\{a_1+ 2a_2 +\cdots + 2^{r-1}a_r,\arn_J(f_1),...,\arn_J(f_1,...,f_r)\}+1.$$
Let $f_i' = f_i +\e_i$, where $\e_i$ is an arbitrary elements in $J^N$, $i = 1,...,r$, and $I' = (f_1',...,f_r')$.
Then \par
\begin{enumerate}[{\rm (i)}]
\item $f_1',\ldots, f_r'$ is a $J$-filter regular sequence with
$$a_J((f_1',...,f_{i-1}'):f_i'/(f_1',...,f_{i-1}')) \le 2^{i-1}a_i$$
for $i = 1,...,r.$
\item $\ini(I') = \ini(I)$.
\item $\gr_J(R/I') \cong \gr_J(R/I)$. 
\item $\arn_J(I') = \arn_J(I)$.
\end{enumerate}
\end{theorem}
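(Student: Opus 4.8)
The plan is to derive parts (iii) and (iv) formally from (ii) and to prove (i) and (ii) together by induction on $r$, the base case $r = 1$ being Proposition \ref{one}. Once (ii) gives $\ini(I') = \ini(I)$, Lemma \ref{repre} yields $\gr_J(R/I') \cong \gr_J(R)/\ini(I') = \gr_J(R)/\ini(I) \cong \gr_J(R/I)$, which is (iii); and Proposition \ref{ar} gives $\arn_J(I') = d(\ini(I')) = d(\ini(I)) = \arn_J(I)$, which is (iv). So the weight of the theorem lies in the initial-ideal identity (ii), with the filter-regularity and colon bounds of (i) carried along to feed the induction.

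For the inductive step I would perturb in two stages. First replace $f_1$ by $f_1'$ while keeping $f_2, \ldots, f_r$ fixed; granting for the moment that this keeps the sequence $J$-filter regular and leaves the initial ideal unchanged, $\ini(f_1', f_2, \ldots, f_r) = \ini(I)$. Then pass to $\bar R = R/(f_1')$, where $\bar f_2, \ldots, \bar f_r$ is a $\bar J$-filter regular sequence of length $r-1$, and apply the inductive hypothesis to perturb it to $\bar f_2', \ldots, \bar f_r'$; the resulting equality of initial ideals in $\gr_{\bar J}(\bar R)$ lifts, via Lemma \ref{qin} with $K = (f_1')$, to $\ini(f_1', f_2', \ldots, f_r') = \ini(f_1', f_2, \ldots, f_r) = \ini(I)$. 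Thus everything reduces to the first stage, namely perturbing the first element of a $J$-filter regular sequence.

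To handle that first stage I would isolate the statement: if $f_1, \ldots, f_s$ is $J$-filter regular and $f_1' = f_1 + \e$ with $\e \in J^c$ for a suitable $c$, then $f_1', f_2, \ldots, f_s$ is again $J$-filter regular, $\ini(f_1', f_2, \ldots, f_s) = \ini(f_1, \ldots, f_s)$, and each later colon invariant at most doubles. This I would prove by a secondary induction on $s$, with Propositions \ref{one} and \ref{two} as the cases $s = 1, 2$. For the step, descend to $R/(f_2)$: since the prefix ideals $(f_2), (f_1, f_2), (f_1, f_2, f_3), \ldots$ are independent of the order of $f_1$ and $f_2$, the reordered images $\bar f_1, \bar f_3, \ldots, \bar f_s$ form a $\bar J$-filter regular sequence of length $s-1$, where $\bar f_1$ is $\bar J$-filter regular by Lemma \ref{exchange} (which also supplies the bound $a_{\bar J}(0 : \bar f_1) \le a_1 + a_2$). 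Perturbing the first element $\bar f_1$ by the secondary hypothesis and lifting through Lemma \ref{qin} with $K = (f_2)$ gives $\ini(f_1', f_2, \ldots, f_s) = \ini(f_1, \ldots, f_s)$, while the $i = 2$ instance of the colon bound is read straight off Proposition \ref{two} applied to the pair $f_1, f_2$.

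It then remains to assemble the numerics and to identify $N$. Since the first stage at most doubles every later colon invariant, in $\bar R = R/(f_1')$ the sequence $\bar f_2, \ldots, \bar f_r$ carries invariants $\le 2a_2, \ldots, 2a_r$, and the inductive hypothesis in length $r-1$ multiplies the $j$-th of these by $2^{j-1}$, producing exactly $2^{i-1}a_i$ in position $i = j+1$. Every application of Propositions \ref{one} and \ref{two} demands that $\e$ lie in a power of $J$ governed by sums of (doubled) colon invariants and by Artin-Rees numbers; Lemma \ref{qar} bounds the Artin-Rees numbers appearing in the quotients $R/(f_2)$ and $R/(f_1')$ by the ambient numbers $\arn_J(f_1, \ldots, f_i)$, and the colon thresholds accumulate to no more than $a_1 + 2a_2 + \cdots + 2^{r-1}a_r$, so the single exponent $N$ of the statement dominates every threshold and $\e_i \in J^N$ suffices throughout. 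I expect the main obstacle to be precisely the first-element lemma together with this bookkeeping: one must check that reordering past $f_2$ and descending to $R/(f_2)$ preserves filter-regularity — delicate because the property is not permutable, and saved here only by the order-independence of the relevant prefix ideals — and that the compounded doubling of colon invariants stays within $2^{i-1}a_i$ while the required $J$-adic thresholds remain below $N$.
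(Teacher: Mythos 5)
Your proposal is correct and takes essentially the same route as the paper's proof: parts (iii) and (iv) are reduced to (ii) via Lemma \ref{repre} and Proposition \ref{ar}, the main induction perturbs $f_1$ first and then applies the inductive hypothesis in $R/(f_1')$ with Lemma \ref{qin} lifting the initial-ideal equalities, and the first-element step rests on Propositions \ref{one} and \ref{two} combined with Lemma \ref{exchange} and Lemma \ref{qar}, with the same numerical bookkeeping leading to the bound $N$. The only difference is organizational: you package the first-element step as a standalone lemma proved by a secondary induction that descends through $R/(f_2)$ after exchanging $f_1$ past $f_2$, whereas the paper argues directly, applying Proposition \ref{two} to the pairs $(f_1,f_{i+1})$ in $R/(f_2,\ldots,f_i)$ for each $i$ and Proposition \ref{one}(ii) in $R/(f_2,\ldots,f_r)$ --- but your recursion, once unwound, performs exactly these computations in exactly these quotients.
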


\begin{proof}
We only need to prove (i) and (ii) because (iii) and (iv) follows from (ii) by Lemma \ref{repre} and Proposition \ref{ar}. 
The case $r = 1$ has been already proved in Proposition \ref{one}.

For $r \ge 2$, we will first show that 
$a_J((f_2,...,f_i):f_1/(f_2,...,f_i)) \le a_1 + \cdots + a_i$,
$i = 1,...,r$. Notice that $(f_2,...,f_i) = 0$ if $i = 1$.

The case $i = 2$ has been proved in the proof of Proposition \ref{two}.
For $i > 2$, we set $m = a_1+\cdots+a_{i-1}$. By induction we may assume that 
$a_J((f_2,...,f_{i-1}):f_1/(f_2,...,f_{i-1})) \le m.$
Then $J^m((f_2,...,f_{i-1}):f_1) \subseteq (f_2,...,f_{i-1}).$
By Lemma \ref{exchange} we have
$$\frac{(f_1,...,f_{i-1}):f_i}{(f_1,...,f_{i-1}) + (f_1,...,f_{i-2}):f_i} \cong \frac{(f_2,...,f_i):f_1}{(f_2,...,f_i) + (f_2,...,f_{i-1}):f_1}.$$
Since $J^{a_i}((f_1,...,f_{i-1}):f_i) \subseteq (f_1,...,f_{i-1})$, this implies
$$J^{a_i}((f_2,...,f_i):f_1) \subseteq (f_2,...,f_i) + (f_2,...,f_{i-1}):f_1.$$
Therefore,
$$J^{m+a_i}((f_2,...,f_i):f_1) \subseteq (f_2,...,f_i) + J^m((f_2,...,f_{i-1}):f_1) = (f_2,...,f_i),$$
which implies 
$$a_J((f_2,...,f_i):f_1/(f_2,...,f_i)) \le m+a_i = a_1 + \cdots +a_i.$$
As a consequence, $f_1$ is a $J$-filter regular element in $\bar R := R/(f_2,...,f_i)$.

Let $i = 1,...,r-1$. Since $f_{i+1}$ is a $J$-filter regular element in $R/(f_1,...,f_i)$, $f_1,f_{i+1}$ is a $J$-filter regular sequence in $\bar R$ with
$$a_J((f_1,...,f_i):f_{i+1}/f_1,...,f_i)) = a_{i+1}.$$
By Lemma \ref{qar}, $\arn_{J\bar R}(f_1\bar R) \le \arn_J(f_1,...,f_i)$ and $\arn_{J\bar R}((f_1,f_{i+1})\bar R) \le \arn_J(f_1,...,f_{i+1})$. Hence,
$$N \ge \max\{a_1+\cdots+a_{i+1}, \arn_{J\bar R}(f_1\bar R)),\arn_{J\bar R}((f_1,f_{i+1})\bar R)\}+1.$$
Applying Proposition \ref{two}(i) to the sequence $f_1,f_{i+1}$ in $\bar R$, we obtain that $f_{i+1}$ is a $J$-filter regular element in $R/(f_1',f_2,...,f_i)$ with
$$a_J((f_1',f_2,...,f_i):f_{i+1}/(f_1',f_2,...,f_i)) \le 2a_{i+1}.$$
From this it follows that $f_1',f_2,...,f_r$ is a $J$-filter regular sequence. 
By Proposition \ref{two}(ii), we have
$\ini((f_1',f_{i+1})\bar R) = \ini((f_1,f_{i+1})\bar R)$. 
Hence $\ini(f_1',f_2,...,f_{i+1}) = \ini(f_1,f_2,...,f_{i+1})$ by Lemma \ref{qar}.
By Proposition \ref{ar}, this implies 
$$\arn_J(f_1',f_2,...,f_{i+1}) = \arn_J(f_1,f_2,...,f_{i+1}).$$

Set $R' = R/(f_1')$. Using induction on $r$, we may assume that 
\begin{enumerate}[(i')]
\item $f_2',..., f_r'$ is a $J$-filter regular sequence in $R'$ with
$$a_J((f_2',...,f_{i-1}')R': f_i'/(f_1',...,f_{i-1}')R') = 2^{i-1}a_i$$
for $i = 2,...,r$. 
\item $\ini((f_2',...,f_r')R') = \ini((f_2,...,f_r)R')$.
\end{enumerate}

By Proposition \ref{one}(i), $f_1'$ is a $J$-filter regular element with $0:f_1' = 0:f_1$. Therefore,
(i') implies that $f_1',..., f_r'$ is a $J$-filter regular sequence with 
$$a_J((f_1',...,f_{i-1}'): f_i'/(f_1',...,f_{i-1}')) = 2^{i-1}a_i$$
for $i = 1,...,r.$ 
By Lemma \ref{qin}, (ii') implies
$$\ini(f_1',f_2',...,f_r') = \ini(f_1',f_2,...,f_r).$$

Set $S = R/(f_2,...,f_r)$. We have shown above that $f_1$ is a $J$-filter regular element in $S$ with
$a_J((f_2,...,f_r): f_1) \le a_1 +\cdots + a_r.$
By Lemma \ref{qar}, $\arn_J(f_1S) \le \arn_J(f_1,...,f_r)$.
Applying Proposition \ref{one}(ii) to this case, we get
$\ini(f_1'S) = \ini(f_1S)$. By Lemma \ref{qin}, this implies
$$\ini(f_1',f_2,...,f_r) = \ini(f_1,f_2,...,f_r).$$
So we obtain $\ini(f_1',f_2',...,f_r') = \ini(f_1,f_2,...,f_r).$
The proof is now complete.
\end{proof}

The upper bound for the perturbation number of Theorem \ref{main} becomes especially simple if the ideal is a complete intersection, when $a_1 = \cdots = a_r = 0$. Moreover, since regular sequences are permutable, we do not need Proposition \ref{two} and hence the numbers $\arn_J(f_1,...,f_i)$ for $i < r$, which gives the following bound.

\begin{corollary} \label{regular} {\bf \cite[Theorem 3.6]{Ch}}
Let $J$ be an arbitrary ideal. Let $I = (f_1,...,f_r)$, where $f_1,\ldots, f_r$ is a regular sequence. Set 
$N =  \arn_J(f_1,...,f_r)+1.$
Let $f_i' = f_i +\e_i$, where $\e_i$ is an arbitrary elements in $J^N$, $i = 1,...,r$, and $I' = (f_1',...,f_r')$.
Then $f_1',\ldots, f_r'$ is a regular sequence and
$\ini(I') = \ini(I)$. 
\end{corollary}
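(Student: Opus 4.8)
The plan is to bypass Proposition \ref{two} entirely and reduce the whole statement to repeated applications of the single-element case, Proposition \ref{one}, exploiting the fact that a regular sequence in a Noetherian local ring is permutable. The gain is twofold: for a regular sequence every colon module $(f_1,\ldots,f_{i-1}):f_i/(f_1,\ldots,f_{i-1})$ vanishes, so all the numbers $a_i$ of Theorem \ref{main} are zero and no doubling occurs; and by isolating the element being perturbed as a single regular element modulo the others, only the top Artin-Rees number $\arn_J(f_1,\ldots,f_r)$ ever intervenes, so that the naive bound $\max\{\arn_J(f_1),\ldots,\arn_J(f_1,\ldots,f_r)\}+1$ coming from Theorem \ref{main} can be replaced by $\arn_J(f_1,\ldots,f_r)+1$.

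Concretely, I would perturb the elements one at a time and argue by induction. Suppose that $f_1',\ldots,f_{k-1}',f_k,\ldots,f_r$ is a regular sequence whose initial ideal equals $\ini(f_1,\ldots,f_r)$; by Proposition \ref{ar} this forces $\arn_J(f_1',\ldots,f_{k-1}',f_k,\ldots,f_r)=\arn_J(f_1,\ldots,f_r)=N-1$. For $k=1$ this is the trivial starting point $f_1,\ldots,f_r$. To replace $f_k$ by $f_k'$, pass to $\bar R=R/(f_1',\ldots,f_{k-1}',f_{k+1},\ldots,f_r)$. By permutability $f_k$ is a regular element of $\bar R$, so $a_{\bar J}(0:\bar f_k)=0$, and by Lemma \ref{qar} one has $\arn_{\bar J}(f_k\bar R)\le\arn_J(f_1',\ldots,f_{k-1}',f_k,f_{k+1},\ldots,f_r)=N-1$. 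Thus Proposition \ref{one} applies to $f_k$ and $f_k'=f_k+\e_k$ with threshold $\max\{0,\arn_{\bar J}(f_k\bar R)+1\}\le N$, and since $\e_k\in J^N$ its image lies in $\bar J^N$; the conclusion is $0:\bar f_k'=0$ and $\ini(f_k'\bar R)=\ini(f_k\bar R)$.

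The equality $0:\bar f_k'=0$ says $f_k'$ is regular on $\bar R$, whence, again by permutability, $f_1',\ldots,f_k',f_{k+1},\ldots,f_r$ is a regular sequence. Lifting the equality of initial ideals through Lemma \ref{qin} (applied with $K=(f_1',\ldots,f_{k-1}',f_{k+1},\ldots,f_r)$) gives $\ini(f_1',\ldots,f_k',f_{k+1},\ldots,f_r)=\ini(f_1',\ldots,f_{k-1}',f_k,\ldots,f_r)=\ini(f_1,\ldots,f_r)$, which closes the induction; after $r$ steps both assertions follow. The only subtle point is the bookkeeping that keeps the single threshold $N$ valid throughout: because each perturbation leaves the initial ideal unchanged, Proposition \ref{ar} guarantees that the Artin-Rees number of the current sequence stays exactly $N-1$, so that at every stage $\arn_{\bar J}(f_k\bar R)\le N-1$ and no intermediate Artin-Rees numbers $\arn_J(f_1,\ldots,f_i)$ for $i<r$ are ever needed. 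This invariance of the initial ideal along the induction, rather than any delicate numerical estimate, is the heart of the argument.
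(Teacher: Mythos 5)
Your proof is correct and is essentially the paper's own argument: the paper likewise bypasses Proposition \ref{two} by perturbing one element at a time via Proposition \ref{one} applied modulo the remaining elements (using permutability of regular sequences together with Lemma \ref{qar}), lifting the equality of initial ideals back up with Lemma \ref{qin}, and invoking Proposition \ref{ar} so that the Artin--Rees number, and hence the single threshold $N$, stays valid at every stage. The only difference is organizational: the paper packages these steps as an induction on $r$ carried out in the quotient ring $R/(f_1')$, whereas you replace the elements in place within $R$, maintaining the invariant there.
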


\begin{proof}
Note that an element $f$ is regular if and only if $a_J(0:f)= 0$.
Applying Proposition \ref{one} to the element $f_1$ in $R/(f_2,...,f_r)$, 
we get that $f_1'$ is a regular element in $R/(f_2,...,f_r)$ and $\ini(f_1',f_2,...,f_r) = \ini(f_1,...,f_r)$.
As a consequence, $f_2,\ldots, f_r,f_1'$ is a regular sequence and $\arn_J(f_1',f_2,...,f_r) = \arn_J(f_1,f_2,...,f_r)$ by Proposition \ref{ar}.
Using induction, we may assume that $f_2',...,f_r'$ is a regular sequence in $R/(f_1')$ and 
$\ini(f_1',f_2',...,f_r') =  \ini(f_1',f_2...,f_r)$.
Therefore, $\ini(I') = \ini(I)$. 
\end{proof}

We can not weaken the condition $f_1,...,f_r$ being a $J$-filter regular sequence in Theorem \ref{main}.
By Theorem \ref{main}(ii), this condition implies $\ini(f_1',...,f_i') = \ini(f_1,...,f_i)$ for any sequence $f_1',...,f_r'$ with $f'_i - f_i \in J^N$, $i = 1,...,r$. We shall see that the converse of this implication also holds. For this we need the following observation.

\begin{lemma} \label{filter}
Let $f$ be a $J$-filter regular element. Let $c = \arn_J(f)+1$. Then $f + \varepsilon$ is a $J$-filter regular element for every $\varepsilon \in J^c$.
\end{lemma}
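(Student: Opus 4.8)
The plan is to reduce the statement to the nonzerodivisor case already settled in Proposition \ref{one}, by passing to the quotient by the $J$-torsion. First I would record the cheap reformulations. Filter-regularity of $f$ is equivalent to $0:f \subseteq \Gamma$, where $\Gamma := \Gamma_J(R)$ is the $J$-torsion submodule; since $R$ is Noetherian, $\Gamma = 0:_R J^m$ for some $m$, so $J^m\Gamma = 0$. In $\bar R := R/\Gamma$ with $\bar J := (J+\Gamma)/\Gamma$, the element $\bar f$ is a nonzerodivisor and $\bar R$ is $\bar J$-torsion-free (if $J^\ell g \subseteq \Gamma$ then $J^{\ell+m}g = 0$, so $g \in \Gamma$). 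Moreover $o(f) \le \arn_J(f) = c-1$: by Proposition \ref{ar}, $\arn_J(f) = d(\ini(f))$, and $f^\ast$ is a generator of $\ini(f)$ in degree $o(f)$ (indeed $\ini(f)_k = 0$ for $k < o(f)$ and $\ini(f)_{o(f)} \ni f^\ast \neq 0$). Hence $o(\varepsilon) \ge c > o(f)$, which will let the order estimates below run.

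The main step is to show that passing to $\bar R$ does not raise the relevant Artin-Rees number, namely
$$\arn_{\bar J}(\bar f) \le \arn_J(f).$$
Granting this, the proof concludes quickly. Since $\bar f$ is a nonzerodivisor, $a_{\bar J}(0:\bar f)=0$, so the threshold of Proposition \ref{one} for $\bar f$ in $\bar R$ is $\max\{0,\arn_{\bar J}(\bar f)+1\}=\arn_{\bar J}(\bar f)+1 \le c$. As $\bar\varepsilon \in \bar J^{\,c}$, Proposition \ref{one} applies to $\bar f$ and $\bar\varepsilon$ in $\bar R$ and gives that $\bar f' = \bar f + \bar\varepsilon$ is again a nonzerodivisor on $\bar R$. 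Consequently $0:_R f' \subseteq \Gamma$: if $gf'=0$ then $\bar g\,\bar f'=0$, whence $\bar g = 0$, i.e. $g \in \Gamma$. Therefore $J^m(0:f')=0$, so $a_J(0:f')<\infty$ and $f'$ is $J$-filter regular, as required.

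For the Artin-Rees comparison I would apply Lemma \ref{qar} with $K = \Gamma \subseteq (f)+\Gamma$ to obtain $\arn_{\bar J}(\bar f) \le \arn_J\!\big((f)+\Gamma\big)$, and then prove $\arn_J\!\big((f)+\Gamma\big) \le \arn_J(f)$; this is where the real obstacle lies. One cannot reduce to $\ini((f)+\Gamma)=\ini(f)+\ini(\Gamma)$ — this already fails in simple examples, so the inclusion of initial ideals is strict — and what rescues the degree bound is that the high-degree minimal generators of $\ini(f)$ arise exactly from cancellations $f^\ast h^\ast = 0$ forced by the torsion, and adjoining $\Gamma$ replaces them by lower-degree generators. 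Concretely I would try to verify, for $n \ge \arn_J(f)$, the stable identity $J^{n+1}\cap((f)+\Gamma)=J\big(J^{n}\cap((f)+\Gamma)\big)$ by peeling off the torsion: for $w=u+\gamma\in J^{n+1}$ with $u\in(f)$, $\gamma\in\Gamma$, one has $J^m w = J^m u \subseteq (f)\cap J^{n+1+m}$, which is controlled by the Artin-Rees identity for $(f)$ (valid since $n+m\ge\arn_J(f)$). The hard part is to \emph{remove the auxiliary factor} $J^m$ — descending from an estimate on $J^m w$ to the desired estimate on $w$ itself — using that $\Gamma\cap J^{\,t}=0$ for $t\gg 0$ and that $\bar f$ is a nonzerodivisor; I expect this descent to be the crux of the whole argument. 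I would note that the easy range $\arn_J(f)+1 \ge a_J(0:f)$ is immediate, since there $c \ge \max\{a_J(0:f),\arn_J(f)+1\}$ and Proposition \ref{one} applies to $f$ in $R$ directly (indeed giving $0:f'=0:f$); the entire content therefore sits in the opposite range $\arn_J(f)+1 < a_J(0:f)$, where $0:f'$ may genuinely differ from $0:f$ and only the torsion-quotient argument is available.
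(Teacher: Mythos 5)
Your reduction framework is sound in its outer layers: filter-regularity of $f$ is indeed equivalent to $0:f \subseteq \Gamma$ (the $J$-torsion), $\bar f$ is a nonzerodivisor on $\bar R = R/\Gamma$, and if $\bar f + \bar\varepsilon$ is a nonzerodivisor on $\bar R$ then $0:_R(f+\varepsilon) \subseteq \Gamma$, so $f+\varepsilon$ is $J$-filter regular. But the proof stands or falls with the inequality $\arn_{\bar J}(\bar f) \le \arn_J(f)$, and this you do not prove; you yourself defer it as ``the real obstacle'' and ``the crux.'' Lemma \ref{qar} only yields $\arn_{\bar J}(\bar f) \le \arn_J((f)+\Gamma)$, since it requires $K \subseteq I$ and hence forces $I = (f)+\Gamma$; nothing in the paper, and nothing in your sketch, bounds $\arn_J((f)+\Gamma)$ by $\arn_J(f)$. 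By Proposition \ref{ar} that missing bound is the assertion $d(\ini((f)+\Gamma)) \le d(\ini(f))$, and, as you note yourself, $\ini((f)+\Gamma)$ can contain initial forms of elements $fg+\gamma$ created by cancellation which lie in neither $\ini(f)$ nor $\ini(\Gamma)$; there is no a priori degree bound on the minimal generators these produce. Note also that you need the inequality on the nose: the threshold of Proposition \ref{one} for $\bar f$ is $\arn_{\bar J}(\bar f)+1$, while the lemma only supplies $\varepsilon \in J^c$ with $c = \arn_J(f)+1$, so even a weaker estimate such as $\arn_{\bar J}(\bar f) \le \arn_J(f)+a_J(0:f)$ would not rescue the argument.

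The descent you propose to close this gap cannot work as described. From $J^m w = J^m u \subseteq (f)\cap J^{n+1+m} = J^{n+m+2-c}\bigl(J^{c-1}\cap (f)\bigr)$ you would need to strip off the auxiliary factor $J^m$; but an implication of the shape ``$J^m w \subseteq J^m X$ implies $w \in X$'' is false in general, even in a one-dimensional domain: in $k[[t^2,t^3]]$ with $\fm = (t^2,t^3)$ one has $\fm\, t^3 \subseteq \fm\,(t^2)$, yet $t^3 \notin (t^2)$. Torsion-freeness of $\bar R$ lets you cancel a single nonzerodivisor, not an ideal factor $J^m$. Compare this with the paper's proof, which never leaves $R$ and needs no comparison of Artin-Rees numbers across a quotient: choose $n$ with $(0:f)\cap J^n = 0$ and take $g \in (0:(f+\varepsilon))\cap J^n$; then $gf = -g\varepsilon \in (f)\cap J^{n+c} = J^{n+1}f(J^{c-1}:f)$, hence $g \in J^{n+1}+(0:f)$, and the modular law (using $g \in J^n$) gives $g \in J^{n+1}$; iterating and applying Krull's intersection theorem yields $g = 0$, so $(0:(f+\varepsilon))\cap J^n = 0$ and therefore $J^n(0:(f+\varepsilon)) = 0$. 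The decisive trick you are missing is that the candidate annihilator $g$ is placed inside $J^n$ from the outset, so the Artin-Rees identity is applied to the product $gf$ --- whose order is boosted by the order of $g$ itself --- and each pass gains one power of $J$ on $g$. That bootstrapping is what replaces your problematic descent, and it makes the quotient by $\Gamma$ unnecessary altogether.
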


\begin{proof} 
Suppose that $(0:f) \cap J^n = 0$. Let $g \in (0:(f+ \varepsilon)) \cap J^n$. Then 
$$gf = - g \varepsilon \in J^{n + c} = J^{n+1}(J^{c-1} \cap (f)) = J^{n+1}f(J^{c-1}:f).$$
Therefore, $g \in  J^{n+1}(J^{c-1}:f) + (0:f) \subseteq J^{n+1} + (0:f).$ Hence, 
$$g \in (J^{n+1} + (0:f)) \cap J^n = J^{n+1} + (0:f) \cap J^n = J^{n+1}.$$
Thus, $g \in (0:(f+ \varepsilon)) \cap J^{n+1}$. Proceeding as above, we get $g \in (0:(f+ \varepsilon)) \cap J^{n + m}$ for all $m \ge 0$. By Krull's intersection theorem, $g = 0$. Therefore, $(0:(f+ \varepsilon)) \cap J^n =0$, which implies that  $f + \varepsilon$ is a $J$-filter regular element.
\end{proof}

The following example shows that the condition $f \in J^c$ for $c = \arn_J(f)+1$ of Lemma \ref{filter} does not imply $\ini(f) = \ini(f+\e)$ as in Proposition \ref{one}(ii).

\begin{example} 
Let $k[[X,Y]]$ be a power series ring in two variables over a field $k$, 
$R = k[[X,Y]]/(XY,Y^4) = k[[x,y]]$, $J = \fm$, and $f = x$. 
Then $\arn_{\fm}(x) = 1$. Put $\e  = y^2$. 
It is easy to check that $\gr_\fm(R) = k[X,Y]/(XY,Y^4) = k[x^*,y^*]$, $\ini(x) = (x^*)$ and $\ini(x+y^2) = (x^*)+(y^*)^3$.
\end{example}

\begin{theorem} \label{converse}
Let $J$ be an arbitrary ideal and $f_1,...,f_r$ a sequence of elements in $R$.
There exists a number $N$ such that 
$$\ini(f_1',...,f_i') = \ini(f_1,...,f_i)$$
for any sequence $f_1',...,f_r'$ with  $f'_i - f_i \in J^N$, $i = 1,...,r$,
if and only if $f_1,...,f_r$ is a $J$-filter regular sequence.
\end{theorem}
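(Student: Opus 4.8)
The forward implication is immediate from Theorem~\ref{main}. Indeed, if $f_1,\dots,f_r$ is a $J$-filter regular sequence then so is each prefix $f_1,\dots,f_i$, so applying Theorem~\ref{main}(ii) to the sequence $f_1,\dots,f_i$ yields a number $N_i$ with $\ini(f_1',\dots,f_i')=\ini(f_1,\dots,f_i)$ whenever $f_j'\equiv f_j \bmod J^{N_i}$; taking $N=\max_i N_i$ gives the required uniform bound. (In fact the inductive proof of Theorem~\ref{main} already produces all the prefix equalities simultaneously.)

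For the converse I would argue by contraposition: assuming $f_1,\dots,f_r$ is \emph{not} $J$-filter regular, I must exhibit, for every $N$, a perturbation violating one of the prefix equalities. Let $i$ be the least index for which $f_i$ fails to be a $J$-filter regular element of $\bar R:=R/(f_1,\dots,f_{i-1})$, and perturb only the $i$-th element (i.e.\ $\e_j=0$ for $j\neq i$). Writing $\bar J=(J+(f_1,\dots,f_{i-1}))/(f_1,\dots,f_{i-1})$, Lemma~\ref{qin} (combined with Lemma~\ref{repre}, which identifies $\gr_J(R/(f_1,\dots,f_{i-1}))$ with $\gr_J(R)/\ini(f_1,\dots,f_{i-1})$) shows that, since $\ini(f_1,\dots,f_{i-1})$ is unchanged, the two prefix initial ideals in $\gr_J(R)$ are equal if and only if the corresponding principal initial ideals $\ini(\bar f_i\bar R)$ and $\ini((\bar f_i+\bar\e)\bar R)$ in $\gr_{\bar J}(\bar R)$ are equal. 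Thus everything reduces to the single-element case $r=1$, which I now restate in $\bar R$ (renamed $R$).

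So let $f$ be a single element that is not $J$-filter regular. Then there is an associated prime $\fp\in\Ass R$ with $J\not\subseteq\fp$ and $f\in\fp$; write $\fp=\Ann(w)$, so $wf=0$. The plan is to perturb $f$ \emph{transversally} to $\fp$: since $\fp$ is prime and $J\not\subseteq\fp$ we have $J^N\not\subseteq\fp$, so I may choose $\e\in J^N\setminus\fp$. Then $f\in\fp$ but $f+\e\notin\fp$, and after localizing at $\fp$ the element $f+\e$ becomes a unit (as $\e$ is a unit in $R_\fp$ while $f\in\fp R_\fp$), so that $R_\fp/(f+\e)=0$ whereas $R_\fp/(f)\neq 0$. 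Hence $\fp\in\Supp(R/(f))\setminus\Supp(R/(f+\e))$. I would then deduce $\gr_J(R/(f))\not\cong\gr_J(R/(f+\e))$, which by Lemma~\ref{repre} is exactly $\ini((f))\neq\ini((f+\e))$, as required.

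The hard part is this last deduction: converting the support difference at $\fp$ into a difference of initial ideals. The subtlety is that $\fp$ lies \emph{off} $V(J)$, while $\gr_J(R)$ is built over $R/J$ and a priori does not see $\fp$; moreover the naive candidate for a new initial form, $(w\e)^*=(w(f+\e))^*\in\ini((f+\e))$, typically already lies in $\ini((f))$ and so detects nothing. What actually enlarges the initial ideal is the \emph{breaking of the relation} $wf=0$ once $\e\notin\fp$, which forces new ``pure'' initial forms into $\ini((f+\e))$ through cancellations governed by the defining relations of $R$. For instance, in $R=k[[x,z]]/(xz^2)$ with $J=(x)$ and $f=z\in\fp=(z)=\Ann(xz)$, the perturbation $\e=x$ gives $x^3=(z+x)(x^2-xz)\in(f+\e)$ (using $xz^2=0$), so $(x^*)^3\in\ini((f+\e))$ while $(x^*)^3\notin\ini((f))=(z^*)$, even though $(w\e)^*=(x^2z)^*$ stays inside $\ini((f))$. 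I expect the general argument to establish the strict difference either by comparing a numerical invariant of the two associated graded rings (dimension, multiplicity sequence, or a suitable local-cohomology length) that must change when $f+\e$ leaves $\fp$, or by an explicit syzygy-based construction producing the new initial form; carrying this out uniformly — in particular when $\fp$ is an embedded prime, where dimension and multiplicity are insensitive — is where the real work lies.
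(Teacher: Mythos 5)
Your forward direction and your reduction to the single-element case are both fine, and your choice of perturbation (pick $\e\in J^N\setminus\fp$, which exists by prime avoidance since $\fp\not\supseteq J$) is essentially the same move the paper makes via Davis's lemma. But the step you yourself flag as ``the hard part'' --- converting the support difference at $\fp$ into $\ini((f))\neq\ini((f+\e))$ --- is a genuine gap, not a technicality: equality of initial ideals in $\gr_J(R)$ is a statement about the modules $(J^n\cap I+J^{n+1})/J^{n+1}$ and carries no evident control over $\Supp$ away from $V(J)$, which is exactly where $\fp$ lives; as you note, dimension and multiplicity invariants are useless when $\fp$ is embedded, and no syzygy construction is given. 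So the proposal as written does not prove the converse.

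The paper closes precisely this gap by turning the argument around, so that one never has to exhibit a new initial form at all. Assume the number $N$ exists, and enlarge it so that $N\ge\arn_J(f_1,\ldots,f_i)+1$ for all $i$. Using Davis's lemma, choose $\e_i\in J^N$ so that $f_i'=f_i+\e_i$ avoids \emph{all} associated primes of $(f_1,\ldots,f_{i-1})$ not containing $J$ (avoiding just your one witness prime $\fp$ would not suffice here); then $f_i'$ is $J$-filter regular in $\bar R=R/(f_1,\ldots,f_{i-1})$. The hypothesis gives $\ini(f_1,\ldots,f_{i-1},f_i')=\ini(f_1,\ldots,f_i)$, and Proposition \ref{ar} (the identity $\arn_J=d(\ini)$) converts this into an equality of Artin--Rees numbers, whence $\arn_J(f_i'\bar R)<N$ by Lemma \ref{qar}. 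Now Lemma \ref{filter}, applied to the filter-regular element $f_i'$ in $\bar R$ with the perturbation $f_i-f_i'\in J^N$, shows that $f_i$ itself is $J$-filter regular in $\bar R$. In other words, the paper's mechanism is: \emph{if} the initial ideal did not change under the well-chosen perturbation, then the Artin--Rees bound transfers and filter-regularity propagates back from $f_i'$ to $f_i$; this is exactly the contradiction your contrapositive needs, obtained from Proposition \ref{ar} and Lemma \ref{filter} rather than from any geometric comparison of $R/(f)$ and $R/(f+\e)$. If you want to salvage your write-up, replace your final paragraph with this Artin--Rees argument; the rest of your structure can stay.
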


\begin{proof} 
Assume that there exists a number $N$ such that $\ini(f_1',...,f_i') = \ini(f_1,...,f_i)$
if  $f'_i - f_i \in J^N$, $i = 1,...,r$. 
By \cite[Lemma of Appendix]{Da}, there exists $\e_i \in J^N$ such that
$f'_i = f_i+\e_i \not\in \fp$ for all associated primes $\fp \not\supseteq J$ of $(f_1,...,f_{i-1})$, $i = 1,...,r$.
Therefore, $f_i'$ is a $J$-filter regular element in $R/(f_1,...,f_{i-1})$.
Without restriction we may assume that $N \ge \arn_J(f_1,...,f_i)+1$.
Since $\ini(f_1,...,f_{i-1},f_i') = \ini(f_1,...,f_i)$, we have $\arn_J(f_1,...,f_{i-1},f_i') = \arn_J(f_1,...,,f_i) < N$.
Let $\bar R = R/(f_1,...,f_{i-1})$. By Lemma \ref{qar}, $\arn_J(f_i'\bar R) \le \arn_J(f_1,...,f_{i-1},f_i') < N$.
By Lemma \ref{filter} (applied to the element $f_i'$ in the ring $\bar R$), the condition $f_i - f_i' \in J^N$ implies that $f_i$ is a $J$-filter regular element in $\bar R$. This holds for all $i = 1,...,r$. 
Therefore, $f_1,...,f_r$ is a $J$-filter regular sequence. 
The converse of this conclusion follows from Theorem \ref{main}. 
\end{proof}

By Proposition \ref{ar}, $\gr_J(R/I) \cong \gr_J(R/I')$ if and only if $\ini(I) = \ini(I')$. Therefore, Theorem \ref{intro2} follows from Theorem \ref{converse}. In particular, we have the following criterion in the case $r = 1$, which shows that the condition of $I$ being generated by a $J$-filter regular sequence is almost a necessary and sufficient condition for $\gr_J(R/I) \cong \gr_J(R/I')$ under small $J$-adic perturbations $I'$ of $I$.

\begin{corollary} 
Let $J$ be an arbitrary ideal and $f$ an element in $R$. There exists a number $N$ such that $\gr_J(R/(f))\cong \gr_J(R/(f'))$
for any element $f'$ with  $f'- f \in J^N$ if and only if $f$ is a $J$-filter regular element.
\end{corollary}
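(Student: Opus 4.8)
The plan is to recognize this corollary as the $r = 1$ specialization of Theorem \ref{converse}, the only additional ingredient being the dictionary between the associated graded ring and the initial ideal supplied by Lemma \ref{repre}. Concretely, Lemma \ref{repre} gives $\gr_J(R/(f)) \cong \gr_J(R)/\ini(f)$ and likewise for $f'$, so $\ini(f') = \ini(f)$ immediately yields $\gr_J(R/(f)) \cong \gr_J(R/(f'))$; the reverse implication, which I need for the ``only if'' part, is the equivalence $\gr_J(R/(f)) \cong \gr_J(R/(f')) \iff \ini(f') = \ini(f)$ recorded in the discussion preceding this corollary. Granting this dictionary, the statement is exactly Theorem \ref{converse} with $r = 1$, and I would make the two directions explicit as follows.

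For the ``if'' direction I would argue directly rather than quoting Theorem \ref{converse}. Assuming $f$ is a $J$-filter regular element, I set $N = \max\{a_J(0:f),\, \arn_J(f)+1\}$ and apply Proposition \ref{one}: for any $\e \in J^N$ the perturbation $f' = f + \e$ satisfies $\ini(f') = \ini(f)$, and hence $\gr_J(R/(f)) \cong \gr_J(R/(f'))$ by Lemma \ref{repre}. This already exhibits an admissible, explicitly computable $N$.

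The ``only if'' direction is where the real work lies, and I would follow the $r = 1$ case of the proof of Theorem \ref{converse}. Suppose $N$ is a number such that every $f'$ with $f' - f \in J^N$ gives $\gr_J(R/(f)) \cong \gr_J(R/(f'))$, equivalently $\ini(f') = \ini(f)$; enlarging $N$ if necessary I may assume $N \ge \arn_J(f) + 1$. By the Appendix lemma of \cite{Da} I can select one particular $\e \in J^N$ for which $f' := f + \e$ avoids every associated prime $\fp \not\supseteq J$ of $R$, so that $f'$ is genuinely $J$-filter regular. Since $\ini(f') = \ini(f)$, Proposition \ref{ar} gives $\arn_J(f') = \arn_J(f) < N$, so $\arn_J(f') + 1 \le N$. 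I then reverse the perturbation: writing $f = f' + (-\e)$ with $-\e \in J^N \subseteq J^{\arn_J(f')+1}$, Lemma \ref{filter} applied to the filter-regular element $f'$ shows that $f$ itself is $J$-filter regular.

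The main obstacle is precisely this last step. Since $f$ need not be $J$-filter regular a priori, I cannot apply Proposition \ref{one} to $f$; instead the argument must manufacture a filter-regular representative $f'$ within the perturbation class using Davis's genericity lemma, read off the invariance of the Artin-Rees number from $\ini(f') = \ini(f)$ via Proposition \ref{ar}, and only then transport filter-regularity back from $f'$ to $f$ through Lemma \ref{filter}. Everything else reduces to the routine translation of Lemma \ref{repre}.
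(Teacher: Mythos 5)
Your proposal is correct and takes essentially the same route as the paper: the paper obtains this corollary precisely as the $r=1$ case of Theorem \ref{converse}, translated through the same dictionary $\gr_J(R/(f)) \cong \gr_J(R/(f'))$ if and only if $\ini(f) = \ini(f')$ that you invoke. Your two directions (Proposition \ref{one} together with Lemma \ref{repre} for sufficiency; the lemma of \cite{Da}, Proposition \ref{ar}, and Lemma \ref{filter} for necessity) are exactly the $r=1$ specialization of the paper's proof of Theorem \ref{converse}, merely written out explicitly.
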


Now we will give some non-trivial applications of Theorem \ref{main}. First, inspired by the conjecture of Srinivas and Trivedi on the invariance of the Hilbert-Samuel function under small perturbations we study 
the following generalization of the Hilbert-Samuel function for non-$\fm$-primary ideals.

Let $G := \gr_\fm(\gr_J(R)),$
which is a standard bigraded algebra with bigraded components
$$G_{uv}=\frac{\fm^uJ^v+J^{v+1}}{\fm^{u+1}J^v+J^{v+1}}.$$
for $u,v \ge 0$. Note that $G_{uv}$ is a module of finite length.
Let 
$$h(r,s)=\sum_{u=0}^{r} \sum_{v=0}^{s} \ell(G_{uv}),$$
where $\ell(\cdot)$ denotes length. This function was studied first by Achilles and Manaresi \cite{AM}.
For $r$ and $s$ sufficiently large, $h(r,s)$ is a polynomial of degree $d = \dim R$.
If one writes this polynomial in the form
$$ \sum_{i=0}^d \frac{c_i (J)}{(d-i)! \, i!} \, r^{d-i}s^{i}  +\mbox{  terms of lower degree,}
$$
then $c_0(J),...,c_d(J)$ are nonnegative integers. Achilles and Manaresi call them the multiplicity sequence of $R$ with respect to $J$. This notion plays a similar role as the multiplicity for non-$\fm$-primary ideals \cite{FOV,PTUV}. In particular, the Segre numbers in singularity theory \cite{GG} are a special case of the multiplicity sequence. We call $h(r,s)$ the {\it Achilles-Manaresi function} of $R$ with respect to $J$.

\begin{corollary} \label{AM}
Let $J, I, I'$ be as in Theorem \ref{main}. Then $R/I$ and $R/I'$ share the same Achilles-Manaresi function with respect to $J$. 
\end{corollary}

\begin{proof} 
By Theorem \ref{main}(iii), we have $\gr_\fm(\gr_J(R/I)) \cong \gr_\fm(\gr_J(R/I'))$, which implies the conclusion.
\end{proof}

Next we study the impact of perturbations to the Rees algebra $\Re_J(R/I)$.

\begin{corollary} \label{Rees}
Let $J, I, I'$ be as in Theorem \ref{main}.  Then the following invariants and properties of the Rees algebra $\Re_J(R/I)$ are preserved when passing to $\Re_J(R/I')$:
\begin{enumerate}[\rm (i)]
\item the relation type,
\item the Castelnuovo-Mumford regularity,
\item the Cohen-Macaulayness,
\item the Gorensteiness.
\end{enumerate}
\end{corollary}

\begin{proof} 
By Theorem \ref{main}(iii), $\gr_J(R/I') \cong \gr_J(R/I)$. 
It is well known that $J$ is generated by a regular sequence in $R/I$ or $R/I'$ if and only if 
$\gr_J(R/I)$ or $\gr_J(R/I')$ is isomorphic to a polynomial ring over $R/(I+J) = R/(I'+J)$. 
Therefore, $J$ is generated by a regular sequence in $R/I$ if and only if $J$ is generated by a regular sequence in $R/I'$.
If $J$ is generated by a regular sequence in $R/I$ and $R/I'$, we have
$\rel(\Re_J(R/I)) = \rel(\Re_J(R/I')) = 1$. 
If $J$ is not generated a regular sequence in $R/I$ and $R/I'$, then $\rel(\Re_J(R/I)) = \rel(\gr_J(R/I))$ and $\rel(\Re_J(R/I')) = \rel(\gr_J(R/I'))$ by  \cite[Proposition 3.3]{PV}. Since $\rel(\gr_J(R/I)) = \rel(\gr_J(R/I'))$, this implies $\rel(\Re_J(R/I)) = \rel(\Re_J(R/I'))$.

By \cite[Lemma 4.8]{O}, the associated graded ring and the Rees algebra have the same Castelnuovo-Mumford. Therefore,
$$\reg(\Re_J(R/I')) = \reg(\gr_J(R/I'))= \reg(\gr_J(R/I)) = \reg(\Re_J(R/I)).$$

By \cite[Theorem 1.1]{TI} and \cite[Theorem 1.1]{TVZ}, the Cohen-Macaulayness and the Gorensteiness of the Rees algebra can be characterized completely 
in terms of the local cohomology modules of the associated graded ring. Since the local cohomology modules of $\gr_J(R/I')$ and $\gr_J(R/I)$ are isomorphic, $\Re_J(R/I')$ is Cohen-Macaulay or Gorenstein if and only if so is $\Re_J(R/I)$.
\end{proof}

Recently, Duarte \cite{Du} studied the invariance of the Betti numbers of $R/I$ under small $\fm$-adic perturbations. It would be of interest to extend his study to arbitrary $J$-adic perturbations.


\section{Perturbation index and extended degree}

Throughout this section, let $(R,\fm)$ be a local ring and $J$ a $\fm$-primary ideal.
For any finitely generated $R$-module $M$ we denote by $e(J,M)$ 
the multiplicity of $M$ with respect to $J$. 

If $R$ is a Cohen-Macaulay ring and $I$ an ideal generated by a regular sequence, Trivedi \cite[Corollary 5]{Tri} gave an upper bound for the perturbation index of $\gr_\fm(R/I)$ in terms of $e(\fm,R/I)$. 
If $R$ is an arbitrary local ring, it is easy to see that there is no upper bound for the perturbation index of $\gr_\fm(R/I)$which depends only on $e(\fm,R/I)$. 

\begin{example} \label{ex} 
Let $k[[X,Y,Y]]$ be a power series ring in three variables over a field $k$, 
$R = k[[X,Y,Z]]/(XZ,YZ,Z^{n+2}) = k[[x,y,z]]$, $J = \fm$, and $f = x$. 
Then $R$ is a generalized Cohen-Macaulay ring, $x$ a filter regular element and $e(\fm,R/(x)) = 1$.
It is easy to check that $\gr_\fm(R) = k[X,Y,Z]/(XZ,YZ,Z^{n+2}) = k[x^*,y^*,z^*]$, $\ini(x) = (x^*)$ and $\ini(x+z^n) = (x^*)+(z^*)^{n+1}$. By Lemma \ref{repre},  $\gr_\fm(R/(x)) \cong \gr_\fm(R)/(x^*)$ and $\gr_\fm(R/(x+z^n)) \cong \gr_\fm(R)/((x^*)+(z^*)^{n+1})$. Therefore, there is a surjective map from $\gr_\fm(R/(x))$ to $\gr_\fm(R/(x+z^n))$, whose kernel is not zero in degree $\ge n+1$. From this it follows that the perturbation index of $\gr_\fm(R/(x))$ is at least $n+1$.
\end{example}

We can give an upper bound for the perturbation index of $\gr_J(R/I)$ in terms of the following generalization of the multiplicity,  which was introduced by Doering, Gunston and Vasconcelos \cite{DGV} for $J=\fm$ and by Linh \cite{Li} for any $\fm$-primary ideal $J$.

\begin{definition} \label{extended}
An {\em extended degree} is a number $D(J,M)$
assigned to every finitely generated $R$-module $M$ which satisfies the following conditions: 
\begin{enumerate}
\item $D(J,M) = D(J,M/L) + \ell(L)$, where $L$ is the maximal submodule of $M$ having
finite length,
\item $D(J,M)  \ge  D(J,M/xM)$ for a general element $x \in J$,
\item $D(J,M)= e(J,M)$ if $M$ is a Cohen-Macaulay module.
\end{enumerate}	
\end{definition}

\begin{remark}
A prototype of extended degree is the homological degree introduced by Vasconcelos \cite{Va2}. Let $R$ be a homomorphic image of a Gorenstein ring $S$. For a finitely generated $R$-module $M$ with $\dim M = d$, the {\em homological degree} of $M$ is defined inductively by setting $\hdeg(J,M) := \ell(M)$ if $d = 0$ and
$$\hdeg(J,M) := e(J,M) + \sum_{i=0}^{d-1} {d-1 \choose i} \hdeg(J,\Ext_S^{\dim S - i}(M,S))$$
if $d > 0$, where $e(J,M)$ denotes the multiplicity of $M$ with respect to $J$.
Note that $\dim (\Ext_S^{\dim S - i}(M,S)) < d$ for $i = 0,...,d-1$.
If $R$ is not a homomorphic image of a Gorenstein ring, we define
$$\hdeg(J,M):= \hdeg(J,M\otimes \hat R),$$
where $\hat R$ denotes the $\fm$-adic completion of $R$. In particular, if $M$ is a generalized Cohen-Macaulay module, we have
$$\hdeg(J,M) = e(J,M) + \sum_{i=0}^{d-1}{d-1 \choose i}\ell(H_{\fm}^i(M)),$$
where $H_{\fm}^i(M)$ denotes the $i$-th local cohomology module of $M$. \par
Among all extended degrees $D(J,M)$ there is a minimal one which is called the smallest extended degree $\sdeg(J,M)$. There is no explicit formula for $\sdeg(J,M)$. However, if $J = \fm$, this extended degree can be efficiently computed by using Gr\"obner basis technique \cite{NR}. 
\end{remark}

Our bound for the perturbation index is based on the following upper bound for the Castelnuovo-Mumford regularity of the associated graded ring in terms of an arbitrary extended degree. 

\begin{proposition} \label{linh} 
Let $J$ be an $\fm$-primary ideal of $R$. 
Let $M$ be a finitely generated $R$-module and $d := \dim(M)$. Then
$$\reg(\gr_J(M)) \le (2^{d!}-1)D(J,M)^{3d!-2}-1.$$
\end{proposition}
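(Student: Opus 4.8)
\textbf{Proof proposal for Proposition \ref{linh}.}

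The plan is to bound $\reg(\gr_J(M))$ by bounding the Castelnuovo-Mumford regularity of a suitable quotient presentation and exploiting the defining properties of an extended degree. The key observation is that $\reg(\gr_J(M))$ is governed by the same kind of homological data as any standard graded module, and that the three axioms of an extended degree -- additivity over the finite-length submodule, the generic hyperplane inequality $D(J,M) \ge D(J,M/xM)$, and agreement with $e(J,M)$ in the Cohen-Macaulay case -- are exactly tailored to run an induction on $d = \dim M$ while keeping track of a single numerical bound.

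First I would set up the induction on $d$. For $d = 0$ the module $\gr_J(M)$ has finite length equal to $\ell(M) = D(J,M)$, and the regularity is bounded by $\ell(M)-1$, which is comfortably below the asserted bound. For the inductive step, I would choose a generic element $x \in J \setminus \fm J$ (a superficial element for $J$ with respect to $M$) so that passing to $M/xM$ decreases dimension and, by axiom (2), does not increase the extended degree. The standard exact sequences relating $\gr_J(M)$, $\gr_J(M/xM)$, and the kernel of multiplication by $x^*$ then let me compare $\reg(\gr_J(M))$ with $\reg(\gr_J(M/xM))$, up to a correction term controlled by the length of the submodule of $M$ annihilated by powers of $x$. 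The additivity axiom (1) is what converts that length correction into a term absorbed by $D(J,M)$.

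The main obstacle, and the reason for the doubly-exponential shape $(2^{d!}-1)D^{3d!-2}-1$ of the bound, is the control of the \emph{correction terms} at each inductive step. Each reduction by a generic element forces one to bound the length of a finite-length piece (coming from $H^0_{\fm}$ or the kernel $0 :_{\gr_J(M)} x^*$) in terms of the extended degree, and these bounds multiply across the $d$ levels of recursion; the factorial $d!$ enters precisely because the generic-element reduction must be iterated and the bounds compound multiplicatively rather than additively. I expect the delicate point to be verifying that, at the level where $\dim = 1$ or $\dim = 0$, the finite-length data is bounded by $D(J,M)$ (not merely by $e(J,M)$), which is where axiom (1) is indispensable: the extended degree, unlike the multiplicity, sees the finite-length submodule. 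One then assembles the recursion $\reg(\gr_J(M)) \le$ (something like) $2\,\reg(\gr_J(M/xM)) + c\,D(J,M)$ and solves it to obtain the stated closed form. I would refer to the work of Linh \cite{Li} (and Doering-Gunston-Vasconcelos \cite{DGV} in the case $J=\fm$) for the precise form of these regularity estimates in terms of an arbitrary extended degree, since the numerology of the constants is exactly what those references establish.
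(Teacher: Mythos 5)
Your plan has a genuine gap: as written, it never actually produces the stated bound. The paper's proof does no induction at all. It treats $d=0$ directly (for a finite-length module, $\reg(\gr_J(M))$ is the top nonvanishing degree of $\gr_J(M)$, hence at most $\ell(M)-1 = D(J,M)-1$ by axiom (1) of Definition \ref{extended}), and for $d \ge 1$ it simply quotes Theorem 1.1 of \cite{Li}, which gives $\reg(\gr_J(M)) \le D(J,M)-1$ when $d=1$ and $\reg(\gr_J(M)) \le 2^{(d-1)!}D(J,M)^{3(d-1)!-1}-1$ when $d \ge 2$. The only remaining work is the elementary check that the single formula $(2^{d!}-1)D(J,M)^{3d!-2}-1$ dominates all three bounds: for $d \le 1$ it equals $D(J,M)-1$ because $0!=1!=1$, and for $d \ge 2$ one uses $2^{(d-1)!} \le 2^{d!}-1$ and $3(d-1)!-1 \le 3d!-2$. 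Your proposal omits precisely these two ingredients: you never state what Linh's bounds actually are, and you never perform the comparison that converts them into the unified closed form --- yet that comparison is essentially the entire content of the proposition, which is nothing more than a repackaging of Linh's theorem so that it holds uniformly for all $d \ge 0$.

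Moreover, the from-scratch induction you sketch is quantitatively incoherent with the bound you are trying to prove. A recursion of the shape $\reg(\gr_J(M)) \le 2\,\reg(\gr_J(M/xM)) + c\,D(J,M)$ is additive-linear in the previous level and can only produce bounds growing like $2^{d}D(J,M)$; it can never yield a bound in which $D(J,M)$ is raised to the factorial power $3d!-2$. You acknowledge elsewhere in the proposal that the compounding must be multiplicative, but then the recursion you propose to ``solve'' is not one that could give the closed form. Carrying out such an induction correctly --- with control of $0 :_{\gr_J(M)} x^*$, superficial elements, and the resulting polynomial compounding at each step --- is exactly the nontrivial content of \cite{Li}, which is why the paper cites it rather than reproving it. So either you complete the hard work of \cite{Li} (which your sketch does not do), or you cite it; in the latter case what remains, and what is missing from your write-up, is the statement of Linh's bounds and the arithmetic that unifies them with the $d=0$ case into the asserted formula.
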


\begin{proof}
If $d = 0$, $M$ is of finite length and $\sum_{n\ge 0}\ell(J^nM/J^{n+1}M) = \ell(M)$.
Since $\gr_J(M)_n = J^nM/J^{n+1}M = 0$ for $n \gg 0$, $\reg(\gr_J(M))$ is the largest integer $n$ such that $\ell(J^nM/J^{n+1}M) \neq 0$. From this it follows that $\reg(\gr_J(M)) \le \ell(M) - 1$. By Definition \ref{extended}(1), $D(J,M) = \ell(M)$. Therefore, \par
\indent (i) $\reg(\gr_J(M)) \le D(J,M)-1$ if $d = 0$.\par
\noindent By \cite[Theorem 1.1]{Li}, we have \par
\indent  (ii) $\reg(\gr_J(M)) \le D(J,M)-1$ if $d = 1$,\par
\indent  (iii) $\reg(\gr_J(M)) \le 2^{(d-1)!}D(J,M)^{3(d-1)!-1} - 1$ if $d \ge 2$.\par
\noindent Note that $0! = 1$. Combining (i)-(iii) we obtain $\reg(\gr_J(M)) \le (2^{d!}-1)D(J,M)^{3d!-2}-1$ for all $d \ge 0$.
\end{proof}

\begin{theorem} \label{m-primary}
Let $(R,\fm)$ be an arbitrary local ring and $d = \dim R$.
Let $J$ be an arbitrary $\fm$-primary ideal of $R$. 
Let $I = (f_1,...,f_r)$, where $f_1,...,f_r$ is a filter-regular sequence in $R$. 
Let  $m = \max\{d!,r\}$ and $D := \max\{D(J, R/(f_1,...,f_i))|\ i = 0,...,r\}.$ Set
$N := 2^{m-1}D^{3d!-2}.$
Then $\gr_J(R/I) \cong \gr_J(R/I')$
for all ideals $I' = (f_1',...,f_r')$, where $f_1',...,f_r'$ is a sequence of elements in $R$ such that $f'_i - f_i \in J^N$, $i = 1,...,r$.
\end{theorem}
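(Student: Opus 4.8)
The plan is to \emph{deduce the statement from Theorem \ref{main}} by checking that the explicit exponent $N$ proposed here dominates the perturbation bound produced there. Since $J$ is $\fm$-primary, a prime contains $J$ if and only if it equals $\fm$, so a filter-regular sequence is the same as a $J$-filter regular sequence and Theorem \ref{main} applies to $f_1,\dots,f_r$. Writing $M_i := R/(f_1,\dots,f_i)$ and $a_i := a_J\big(((f_1,\dots,f_{i-1}):f_i)/(f_1,\dots,f_{i-1})\big)$, Theorem \ref{main}(iii) yields $\gr_J(R/I) \cong \gr_J(R/I')$ as soon as $\e_i \in J^{N_0}$, where
\[
N_0 = \max\{a_1 + 2a_2 + \dots + 2^{r-1}a_r,\ \arn_J(f_1),\dots,\arn_J(f_1,\dots,f_r)\} + 1 .
\]
Because $J^N \subseteq J^{N_0}$ once $N \ge N_0$, it suffices to prove $N_0 \le N$, and the whole argument reduces to estimating the two kinds of quantities in $N_0$ by the extended degree $D$.

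First I would bound the numbers $a_i$. The module $L_i := ((f_1,\dots,f_{i-1}):f_i)/(f_1,\dots,f_{i-1})$ is the $f_i$-torsion of $M_{i-1}$, and since $f_i$ is filter regular on $M_{i-1}$ it is supported only at $\fm$; hence $L_i$ has finite length and $L_i \subseteq H^0_{\fm}(M_{i-1})$. As $J$ is $\fm$-primary and $L_i$ has finite length, the chain $L_i \supseteq JL_i \supseteq J^2L_i \supseteq \cdots$ drops strictly until it vanishes, so $a_i = a_J(L_i) \le \ell(L_i) \le \ell\big(H^0_{\fm}(M_{i-1})\big)$. By Definition \ref{extended}(1), $\ell\big(H^0_{\fm}(M_{i-1})\big) = D(J,M_{i-1}) - D\big(J,M_{i-1}/H^0_{\fm}(M_{i-1})\big) \le D$, and the subtracted term is at least $1$ whenever $\dim M_{i-1} > 0$. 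Thus $a_i \le D$ in general and $a_i \le D-1$ for $i \le d$, giving $a_1 + 2a_2 + \dots + 2^{r-1}a_r \le (2^r-1)D$.

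Next I would bound the Artin-Rees numbers. By Corollary \ref{reg}, $\arn_J(f_1,\dots,f_i) \le \reg(\gr_J(M_i)) + 1$. Since $f_1,\dots,f_i$ is filter regular, each $f_j$ avoids the minimal primes of positive dimension, so $\dim M_i = \max\{d-i,0\} \le d-1$ for every $i \ge 1$. Applying Proposition \ref{linh} to $M_i$ and using that $k \mapsto (2^{k!}-1)D^{3k!-2}$ is nondecreasing for $D \ge 1$, I obtain
\[
\arn_J(f_1,\dots,f_i) \le (2^{(d-1)!}-1)\,D^{3(d-1)!-2}
\]
for all $i \ge 1$ (with the degenerate cases $d \in \{0,1\}$ checked separately).

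Finally I would assemble the estimate with $m = \max\{d!,r\}$. For $d \ge 2$ one has $(d-1)! \le m-1$ and $3(d-1)!-2 \le 3d!-2$, so the regularity term already satisfies $(2^{(d-1)!}-1)D^{3(d-1)!-2} + 1 \le 2^{m-1}D^{3d!-2}$; the weighted-sum term $(2^r-1)D+1$ fits after using the gap between $D$ and $D^{3d!-2}$ (for $D \ge 2$ one has $D^{3d!-3} \ge 2$, whence $(2^r-1)D \le 2^m D \le 2^{m-1}D^{3d!-2}$), while if $D=1$ every $M_{i-1}$ is Cohen-Macaulay, all $a_i$ vanish, and the sum disappears. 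The main obstacle is exactly this bookkeeping: the weighted sum carries the \emph{exponential} weights $2^{i-1}$ but only a single power of $D$, whereas the regularity bound carries the large power $D^{3d!-2}$ but only a modest power of $2$, and one has to verify that the single choice $m = \max\{d!,r\}$, together with the refinement $a_i \le D-1$ in positive dimension and the dimension drop $\dim M_i \le d-1$, reconciles the two in every dimension, including the low-dimensional and $D=1$ corners where the naive bound on the sum would otherwise overshoot.
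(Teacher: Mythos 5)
Your strategy is exactly the paper's own: reduce to Theorem \ref{main} by verifying $N\ge N_0$, bound each $a_i$ via Nakayama and Definition \ref{extended}(1) by $a_i\le\ell(L_i)\le D$, and bound the Artin--Rees numbers via Corollary \ref{reg} and Proposition \ref{linh}. Your refinements (the dimension drop $\dim M_i\le d-1$ for $i\ge 1$, and $a_i\le D-1$ while $\dim M_{i-1}>0$) are correct, and the first one in fact repairs a flaw in the paper's own Artin--Rees estimate. The genuine gap is precisely the corner you flagged as the main obstacle: the claim that ``if $D=1$ every $M_{i-1}$ is Cohen--Macaulay, all $a_i$ vanish, and the sum disappears'' is false. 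A filter-regular sequence can be longer than $\dim R$, because once $M_{i-1}$ has finite length every element is vacuously filter regular on it; then for $i>d$ one can have $M_{i-1}\cong k$, so $L_i=k$ and $a_i=a_J(k)=1$. Concretely, take $R=k[[x]]$, $J=\fm=(x)$ and $f_1=\cdots=f_r=x$ with $r\ge 2$: then $D=1$, $a_1=0$ and $a_i=1$ for $i\ge 2$, so Theorem \ref{main} demands $N_0=(2^r-2)+1=2^r-1$, whereas $N=2^{m-1}=2^{r-1}<N_0$. Hence no bookkeeping along these lines can close the case $D=1$ with $r$ large, and for the same reason the ``degenerate cases $d\in\{0,1\}$ checked separately'' cannot actually be checked: for $d=1$ the weighted sum can reach $(2^r-2)D$ (e.g.\ $f_1=\cdots=f_r=x^D$ in $k[[x]]$) while $N=2^{r-1}D$.

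You should know this defect is inherited, not introduced: the paper's own proof asserts $(2^r-1)D\le N$, which fails whenever $d\le 1$, and whenever $D=1$ and $r\ge d!$; its estimate $(2^{d!}-1)D^{3d!-2}\le N$ for the Artin--Rees part likewise fails when $r\le d!$ and $D\ge 2$. Worse, the statement itself is false at the corner $d=r=1$: for $R=k[[x]]$, $J=\fm$, $f_1=x$ one has $D=1$, $m=1$, $N=1$, so $f_1'=0$ is an admissible perturbation, yet $\gr_\fm(R/(x))\cong k$ and $\gr_\fm(R)\cong k[X]$ are not isomorphic. The honest repair is not to force the $a_i$ to vanish but to enlarge the bound to $N=2^{m}D^{3d!-2}$; then uniformly in all cases
$$\sum_{i=1}^r 2^{i-1}a_i+1\le (2^r-1)D+1\le 2^mD\le N,\qquad \arn_J(f_1,\ldots,f_i)+1\le 2^{d!}D^{3d!-2}\le N,$$
and Theorem \ref{main} applies with no case analysis on $D$ or $d$ whatsoever.
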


\begin{proof}
Since $f_1,...,f_r$ is a filter-regular sequence, $(f_1,...,f_{i-1}):f_i/(f_1,...,f_{i-1})$ is of finite length, $i = 1,...,r$. Hence,
\begin{align*}
a_J((f_1,...,f_{i-1}):f_i/(f_1,...,f_{i-1})) & \le a_{\fm}((f_1,...,f_{i-1}):f_i/(f_1,...,f_{i-1}))\\
& \le \ell((f_1,...,f_{i-1}):f_i/(f_1,...,f_{i-1})).
\end{align*}
By Definition \ref{extended}(1), 
$$\ell((f_1,...,f_{i-1}):f_i/(f_1,...,f_{i-1})) \le  D(J,R/(f_1,...,f_{i-1})) \le D.$$
From this it follows that
$$\sum_{i=1}^r2^{i-1}a_J((f_1,...,f_{i-1}):f_i/(f_1,...,f_{i-1})) \le (1+2+\cdots+2^{r-1})D = (2^r-1)D \le N.$$
Note that $\dim R/(f_1,...,f_i) \le d$, $i = 1,...,r$. By Corollary \ref{reg} and Proposition \ref{linh},
$$\arn_J((f_1,...,f_i)) \le \reg(\gr_J(R/(f_1,...,f_i))+1 \le (2^{d!}-1)D(J,R/(f_1,...,f_i))^{3d!-2}-1.$$
Therefore,
$$\max\{\arn_J(R/(f_1)+1,...,\arn_J(R/(f_1,...,f_r)+1\} \le (2^{d!}-1)D^{3d!-2}-1 \le N.$$
Now we only need to apply Theorem \ref{main}(iii) to obtain the conclusion.
\end{proof}

\begin{remark}
The bound for the perturbation index of $\gr_J(R/I)$ in Theorem \ref{m-primary}  is far from the best possible as one can see from the proof. We do not know whether the complexity for such a bound is polynomial in $D$ or not. If $J = \fm$, one can use \cite[Theorem 3.3]{RTV} to get a slightly better bound for the perturbation index.
\end{remark}

If $\gr_J(R/I) \cong \gr_J(R/I')$, then 
$\ell(I+J^n/I+J^{n+1}) = \ell(I'+J^n/I'+J^{n+1})$
for all $n \ge 0$. Hence $R/I$ and $R/I'$ share the same Hilbert-Samuel function: 
$$\ell(R/I+J^n) = \ell(R/I'+J^n).$$ 
Thus, the Hilbert perturbation index is always less than or equal to the perturbation index of $\gr_J(R/I)$.  

We can derive from the Hilbert perturbation index an upper bound for the perturbation index of $\gr_J(R/I)$ by involving the Artin-Rees number.

\begin{proposition} \label{Hilbert}
Let $(R,\fm)$ be an arbitrary local ring and $J$ an $\fm$-primary ideal of $R$. 
Let $I = (f_1,...,f_r)$, where $f_1,...,f_r$ is a filter-regular sequence in $R$. Let $N = \max\{p,\arn_J(I)+1\}$, where $p$ is the Hilbert perturbation index of $R/I$ with respect to $J$. 
Then 
$\gr_J(R/I) \cong \gr_J(R/I')$
for all ideals $I' = (f_1',...,f_r')$, where $f_1',...,f_r'$ is a sequence of elements in $R$ such that $f'_i - f_i \in J^N$, $i = 1,...,r$.
\end{proposition}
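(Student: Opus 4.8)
The plan is to reduce the statement of Proposition \ref{Hilbert} to an application of Theorem \ref{main}(iii) by controlling the two quantities that govern the number $N$ appearing there: the weighted sum $a_1 + 2a_2 + \cdots + 2^{r-1}a_r$ of the colon-length invariants $a_i = a_J((f_1,\ldots,f_{i-1}):f_i/(f_1,\ldots,f_{i-1}))$, and the Artin–Rees numbers $\arn_J(f_1,\ldots,f_i)$ for $i = 1,\ldots,r$. The key point is that since $f_1,\ldots,f_r$ is assumed to be merely a filter-regular sequence (here $J$ is $\fm$-primary, so $J$-filter regular coincides with the usual filter-regular notion), we have no a priori control over the perturbation index from Theorem \ref{main} itself; instead we wish to replace the bound on the $a_i$ by the Hilbert perturbation index $p$, while keeping the Artin–Rees contribution $\arn_J(I)+1$ explicit.

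The heart of the argument is the following observation. Write $N = \max\{p, \arn_J(I)+1\}$ and suppose $f_i' - f_i \in J^N$ for all $i$. Because $N \ge \arn_J(I)+1$, the perturbation is already at least as deep as the Artin–Rees number of $I$. First I would use the fact that $R/I$ and $R/I'$ have the same Hilbert–Samuel function with respect to $J$ (which holds because $N \ge p$, the Hilbert perturbation index) to deduce that the surjection naturally present between the associated graded rings must be an isomorphism. Concretely, because the perturbation is $J^N$-small with $N > \arn_J(I)$, one obtains a natural surjection $\gr_J(R/I) \twoheadrightarrow \gr_J(R/I')$ (or in the reverse direction), coming from the inclusion of initial ideals forced by the depth of the perturbation; this is where Proposition \ref{ar} and the description of $\ini(I)$ via $d(\ini(I)) = \arn_J(I)$ enter. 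A surjection of finitely generated graded modules whose source and target have equal Hilbert functions in every degree must be an isomorphism, since the kernel would be a nonzero graded module of Hilbert function zero.

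Thus the main steps, in order, are: first, establish that $f_1',\ldots,f_r'$ is again a filter-regular sequence — this follows from Lemma \ref{filter} applied successively in the quotient rings $R/(f_1',\ldots,f_{i-1}')$, using that $N \ge \arn_J(f_1,\ldots,f_i)+1 \ge \arn_{J}(f_i'\cdot)+1$ in the appropriate quotient, exactly as in the proof of Theorem \ref{converse}. Second, produce the natural degree-preserving surjection between $\gr_J(R/I)$ and $\gr_J(R/I')$ using that the perturbation lies in $J^N$ with $N$ exceeding the Artin–Rees number. Third, invoke $N \ge p$ to conclude that the Hilbert–Samuel functions agree, hence $\ell(I+J^n/I+J^{n+1}) = \ell(I'+J^n/I'+J^{n+1})$ for all $n$, so the graded components of source and target have equal length in every degree. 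Fourth, conclude that the surjection is an isomorphism by the length-counting argument.

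The step I expect to be the main obstacle is the construction of the natural surjection in the correct direction and verifying it is well-defined using only the depth hypothesis $N > \arn_J(I)$, rather than the stronger hypotheses of Theorem \ref{main}. The delicate point is that without the full weighted bound on the $a_i$ we cannot directly invoke $\ini(I) = \ini(I')$; we only get a one-sided comparison of initial ideals in low degrees, and matching Hilbert functions is what upgrades this comparison to an equality. I would therefore take care to show that for $n \le \arn_J(I)$ the relevant graded pieces already coincide because the perturbing elements sit in $J^N$ with $N$ larger than all degrees where minimal generators of $\ini(I)$ occur, and that beyond that degree the standard-generation property (Proposition \ref{ar}) propagates the agreement; the Hilbert-function equality then closes the gap in every degree simultaneously.
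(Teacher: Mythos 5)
Your proposal follows essentially the same route as the paper: the paper likewise obtains an epimorphism $\gr_J(R/I) \twoheadrightarrow \gr_J(R/I')$ from the hypothesis $N \ge \arn_J(I)+1$ (citing \cite[Lemma 3.2]{MQS}, which is precisely the inclusion $\ini(I) \subseteq \ini(I')$ you sketch via Proposition \ref{ar}), and then uses $N \ge p$ to equate the Hilbert--Samuel functions, so that the surjection between graded pieces of equal finite length must be an isomorphism. One minor caveat: your first step (filter-regularity of $f_1',\ldots,f_r'$) is both unnecessary for the conclusion and unjustified as stated, since $N = \max\{p,\arn_J(I)+1\}$ need not dominate $\arn_J(f_1,\ldots,f_i)+1$ for $i < r$.
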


\begin{proof}
Assume that $f'_i - f_i \in J^{\arn_J(I)+1}$, $i = 1,...,r$. Then there exists an epimorphism $\varphi$ from $\gr_J(R/I)$ to $\gr_J(R/I')$ \cite[Lemma 3.2]{MQS}. From this it follows that 
$$\ell(I+J^n/I+J^{n+1}) \ge \ell(I'+J^n/I'+J^{n+1})$$
for all $n \ge 0$. Therefore, $\ell(R/I+J^n) \ge \ell(R/I'+J^n)$. Now, it is clear that $\varphi$ becomes an isomorphism if $\ell(R/I+J^n) = \ell(R/I'+J^n)$ for all $n \ge 0$, which holds if $f'_i - f_i \in J^m$, $i = 1,...,r$.
\end{proof}

If $R$ is a  Cohen-Macaulay ring, $J = \fm$, and $f_1,...,f_r$ is part of a system of parameters, 
there is a linear bound for the Hilbert perturbation index in terms of $e(\fm,R/I)$ found by Srinivas and Trivedi \cite[Proposition 1]{ST2}. This bound was used by Trivedi \cite[Corollary 5]{Tri} to give a bound for the perturbation index of $\gr_J(R/I)$. 
Recently, Quy and V.D. Trung \cite[Theorem 1.3]{QT} have been able to extend 
the linear bound of Srinivas and Trivedi for the Hilbert perturbation index to generalized Cohen-Macaulay rings. Using this bound, we obtain the following 
explicit upper bound for the perturbation index of $\gr_J(R/I)$.

\begin{corollary} 
Let $(R,\fm)$ be a generalized Cohen-Macaulay ring and $d=\dim R$. 
Let $I = (f_1,...,f_r)$, where $f_1,...,f_r$ is part of a system of parameters in $R$, and $s = d-r$. Set 
$$N := (2^{s!}+1)\hdeg(\fm,R/I)^{3s!-2} + 1.$$
Then 
$\gr_\fm(R/I) \cong \gr_\fm(R/I')$
for all ideals $I' = (f_1',...,f_r')$, where $f_1',...,f_r'$ is a sequence of elements in $R$ such that $f'_i - f_i \in J^N$, $i = 1,...,r$.
\end{corollary}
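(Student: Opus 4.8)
The plan is to reduce to Proposition~\ref{Hilbert} and then bound its two ingredients—the Artin--Rees number $\arn_\fm(I)$ and the Hilbert perturbation index $p$ of $R/I$—separately by the prescribed $N$. First I would record the two structural facts that make the hypotheses of Proposition~\ref{Hilbert} available: in a generalized Cohen--Macaulay ring every part of a system of parameters is a filter-regular sequence, so $f_1,\dots,f_r$ qualifies; and since $f_1,\dots,f_r$ is part of a system of parameters, $\dim R/I = d-r = s$. By Proposition~\ref{Hilbert} it then suffices to verify $\max\{p,\arn_\fm(I)+1\} \le N$, i.e. to bound each of $\arn_\fm(I)+1$ and $p$ by $N$.

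For the Artin--Rees term I would chain Corollary~\ref{reg} with Proposition~\ref{linh}. Corollary~\ref{reg} gives $\arn_\fm(I) \le \reg(\gr_\fm(R/I))+1$. Applying Proposition~\ref{linh} to $M = R/I$, whose dimension is $s$, and using that the homological degree is an extended degree in the sense of Definition~\ref{extended} (so that $D(\fm,R/I) = \hdeg(\fm,R/I)$ is an admissible choice), yields
$$\reg(\gr_\fm(R/I)) \le (2^{s!}-1)\hdeg(\fm,R/I)^{3s!-2}-1.$$
Combining the two inequalities gives
$$\arn_\fm(I)+1 \le (2^{s!}-1)\hdeg(\fm,R/I)^{3s!-2}+1,$$
which falls short of $N$ by exactly $2\hdeg(\fm,R/I)^{3s!-2}$.

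It remains to bound $p$, and here I would invoke the linear bound of Quy and V.D.~Trung \cite[Theorem 1.3]{QT}, which estimates the Hilbert perturbation index of a generalized Cohen--Macaulay ring linearly in $\hdeg(\fm,R/I)$. Since $\hdeg(\fm,R/I)\ge 1$ and $3s!-2\ge 1$ for every $s\ge 0$, one has $\hdeg(\fm,R/I) \le \hdeg(\fm,R/I)^{3s!-2}$, so this linear contribution is absorbed by the slack $2\hdeg(\fm,R/I)^{3s!-2}$ left over from the previous step, giving $p \le N$. Taking the maximum of the two estimates yields $\max\{p,\arn_\fm(I)+1\} \le N$, and Proposition~\ref{Hilbert} then delivers $\gr_\fm(R/I)\cong\gr_\fm(R/I')$ for the stated perturbations.

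The substantive point—and the only place real care is needed—is the numerical matching in the last step: one must confirm that the explicit constant in the linear bound of \cite[Theorem 1.3]{QT} genuinely fits under $2\hdeg(\fm,R/I)^{3s!-2}$, since this is precisely what forces the coefficient $2^{s!}+1$ (rather than $2^{s!}-1$) in the closed form for $N$. Everything else is routine: the reduction via Proposition~\ref{Hilbert}, the verification that a part of a system of parameters is filter-regular, and the applicability of Proposition~\ref{linh} with $D=\hdeg$.
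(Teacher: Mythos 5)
Your overall route is the paper's route: reduce to Proposition \ref{Hilbert}, bound $\arn_\fm(I)+1$ via Corollary \ref{reg} and Proposition \ref{linh} (with $D=\hdeg$, $\dim R/I=s$), and control the Hilbert perturbation index $p$ via \cite[Theorem 1.3]{QT}. The Artin--Rees half of your argument is complete and agrees with the paper, which obtains $\arn_\fm(I)+1 \le (2^{s!}-1)\hdeg(\fm,R/I)^{3s!-2}+1 \le N$.

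The genuine gap is the step you flag and then leave undone: you never establish $p\le N$, and this is the only step where the explicit constant in $N$ -- the whole content of the corollary -- is actually at stake. The bound of \cite[Theorem 1.3]{QT} is not simply a linear function of $\hdeg(\fm,R/I)$ with a harmless coefficient; as quoted in the paper it reads
$$p \le s!\,\hdeg(\fm,R/I) + (s+1)\sum_{i=0}^{d-1}\binom{d-1}{i}\ell\bigl(H_{\fm}^i(R)\bigr) + 1,$$
so it carries a term measured by the local cohomology of $R$ (not of $R/I$) that is not obviously comparable to $\hdeg(\fm,R/I)$. The paper closes the verification with two facts absent from your write-up: first, $\sum_{i=0}^{d-1}\binom{d-1}{i}\ell(H_{\fm}^i(R)) \le \hdeg(\fm,R/I)$, coming from the structure of the homological degree for generalized Cohen--Macaulay modules, which converts the bound into $p \le (s!+s+1)\hdeg(\fm,R/I)+1$; and second, the numerical check $s!+s \le 2^{s!}$ for all $s\ge 0$, which yields $p \le (2^{s!}+1)\hdeg(\fm,R/I)+1 \le (2^{s!}+1)\hdeg(\fm,R/I)^{3s!-2}+1 = N$ (here, as you note, one uses $\hdeg\ge 1$ and $3s!-2\ge 1$). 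Without these two ingredients, your statement that the linear contribution is ``absorbed by the slack $2\hdeg(\fm,R/I)^{3s!-2}$'' is a restatement of what must be proved rather than a proof; in particular, the coefficient $2^{s!}+1$ in $N$ is justified exactly by the inequality $s!+s+1\le 2^{s!}+1$, which never appears in your argument.
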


\begin{proof}
Let $p$ be the Hilbert perturbation index of $R/I$ with respect to $\fm$.
By \cite[Theorem 1.3]{QT}, 
$$p \le s!\hdeg(\fm,R/I) + (s+1)\sum_{i=0}^{d-1}{d-1 \choose i}\ell(H_{\fm}^i(R))+1.$$
By the definition of the homological degree, 
$\displaystyle \sum_{i=0}^{d-1}{d-1 \choose i}\ell(H_{\fm}^i(R)) \le \hdeg(\fm,R/I)$. Hence, 
$$p \le (s! + s +1)\hdeg(\fm,R/I) +1 \le (2^{s!}+1)\hdeg(\fm,R/I) + 1.$$
By Corollary \ref{reg} and Proposition \ref{linh}, we have
$$\arn_\fm(I) \le \reg(\gr_\fm(R/I))+1 \le (2^{s!}-1)\hdeg(\fm,R/I)^{3s!-2}.$$ 
Hence, $\max\{p,\arn_J(I)+1\} \le (2^{s!}+1)\hdeg(\fm,R/I)^{3s!-2} + 1$.
Therefore, the conclusion follows from Proposition \ref{Hilbert}.
\end{proof}

Inspired of the results of the linear bounds in \cite[Proposition 1]{ST2} and \cite[Theorem 1.3]{QT} we raise the following questions, to which we are unable to give an answer.

\begin{question}
Let $(R,\fm)$ be an arbitrary local ring and $J$ an $\fm$-primary ideal. Let $I = (f_1,...,f_r)$, where $f_1,...,f_r$ is a filter-regular sequence. Does there exist a linear upper bound for the Hilbert perturbation index of $R/I$ with respect to $J$ in terms of the extended degree $D(J,R/I)$?
\end{question}

\begin{question}
Is the perturbation index of $\gr_J(R/I)$ equal to the Hilbert perturbation index of $R/I$ with respect to $J$?
\end{question}


\section{Perturbation with respect to a filtration}

Let $R$ be a convergent power series ring over the field $\Bbb R$ or $\Bbb C$ or a power series ring in several variables over a field. 
Let $<$ be a {\em monomial order}, i.e. a total order on the set of monomials of $R$ that satisfies the conditions: $1 < g$  for all monomials $g \in R$ and $g_1 <  g_2$ implies $hg_1 < hg_2$ for all monomials $g_1,g_2, h \in R$ \cite{Be}. 
For every power series $f \in R$, let $\ini_<(f)$ denote the smallest monomial of $f$ with respect to $<$.
Let $I$ be an ideal of $R$. The initial ideal $\ini_<(I)$ of $I$ is the ideal generated by the initial monomials $\ini_<(f)$, $f \in I$. Let $j^n(f)$ denotes the the $n$-jet of $f$, i.e. the polynomial part of degree $n$ of $f$.

This  section is motivated by the following problem.

\begin{problem} \label{adamus} 
Let $I = (f_1,...,f_r)$, where $f_1,...,f_r$ is a regular sequence in $R$. Let $<$ be an arbitrary monomial order. Does there exist a number $N$ such that for $n \ge N$, $j^n(f_1),...,j^n(f_n)$ is a regular sequence and $\ini_<(I) = \ini_<(I')$, where $I' = (j^n(f_1),...,j^n(f_n))$?
\end{problem}

If $<$ is the degree lexicographic monomial order, this was a conjecture of Adamus and Syedinejad \cite[Conjecture 3.7]{AS}. A solution of this conjecture can be deduced from the invariance of Hilbert-Samuel functions under small perturbations, which was proved by Srinivas and Trivedi \cite{ST1}.
In fact, it is well known that the Hilbert-Samuel function of $R/I$ is the same as that of $R/\ini(I)$. 
By \cite[Lemma 3.2(i)]{AS}, $\ini_<(I) \subseteq \ini_<(I')$ for $n \gg 0$. Therefore, if $R/I$ and $R/I'$ share the same Hilbert-Samuel function, then $R/\ini_<(I)$ and $R/\ini_<(I')$ share the same Hilbert-Samuel function, which implies $\ini_<(I) = \ini_<(I')$ for $n \gg 0$. A direct proof of this conjecture were given by Adamus and Patel \cite{AP1,AP2}, also by using Hilbert-Samuel functions.  
For an arbitrary monomial order $<$, one can not solved Problem \ref{adamus}  by means of  the invariance of Hilbert-Samuel functions under small perturbations. In fact, $R/I$ and $R/\ini_<(I)$ need not have the same Hilbert function because a monomial order need not be a refinement of the degree order. 

We shall give a positive answer to Problem \ref{adamus}  for any Noetherian monomial order. 
Recall that a monomial order  $<$ is {\em Noetherian} if for every monomial $f$ there are only finitely many monomials $g$ with $g<f$. We can view the ideal $\ini_<(I)$ with respect to a Noetherian monomial order as the initial ideal of $I$ with respect to a filtration of ideals, and we will show that the initial ideal with respect to such a filtration does not change under small perturbations.

From now on, let $R$ be a local ring. A {\em filtration} $F$ of ideals in $R$ is a sequence of ideals $\{J_n\}_{n\ge 0}$ which satisfy the following conditions for all $m,n \ge 0$:
\begin{enumerate}
\item $J_0 = R$,
\item $J_m \subseteq J_n$ if $m > n$,
\item $J_mJ_n \subseteq J_{m+n}$.
\end{enumerate}

A filtration $F = \{J_n\}_{n\ge 0}$ is {\em Noetherian} if the Rees algebra 
$\Re_F(R) := \bigoplus_{n\ge 0}J_n$ is Noetherian or, equivalently, if there exists a number $c$ such that $J_{n+1} = \sum_{i=1}^cJ_iJ_{c-i}$ for all $n \ge c$ \cite[Proposition 5.4.3]{BH}.
We can approximate a Noetherian filtration by an adic filtration. 

\begin{lemma} \label{J1}
Let $F = \{J_n\}_{n\ge 0}$ be a Noetherian filtration of ideals. Let $\D$ be the maximum degree of the elements of a minimal homogeneous generating set of $\Re_F(R)$. Then $J_{n\D} \subseteq J_1^n$ for all $n \ge 0$. 
\end{lemma}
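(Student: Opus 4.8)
The plan is to exploit the only hypothesis available, namely that (by the Noetherian assumption) the finitely many homogeneous generators of $\Re_F(R)$ over $J_0 = R$ all sit in degrees between $1$ and $\D$, and then to run a degree-counting (pigeonhole) argument. First I would record the generation statement in the form I actually need: if $g_1,\dots,g_s$ is a minimal homogeneous generating set with $\deg g_j = e_j$ and $1 \le e_j \le \D$, then for every $m$ the component $J_m$ (sitting in degree $m$ of $\Re_F(R)$) is generated as an $R$-module by the monomials $g_{j_1}\cdots g_{j_k}$ whose degrees $e_{j_1}+\cdots+e_{j_k}$ add up to $m$. Equivalently, every element of $J_m$ is an $R$-linear combination of products $h_1\cdots h_k$ with each $h_t \in J_{e_t}$, $1\le e_t\le\D$, and $\sum_t e_t = m$.

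Next I would specialize to $m = n\D$ and make the counting observation that is the heart of the argument: in such a product each factor contributes degree at most $\D$, so to reach total degree $n\D$ one needs at least $n$ factors, i.e. $k \ge n$. This is where the specific value $n\D$ is used, and it is the one step that could be mis-stated, so I would be careful that the inequality is $k\D \ge \sum_t e_t = n\D$, hence $k \ge n$, rather than an off-by-one bound.

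Finally I would assemble the conclusion: since the filtration is decreasing (condition (2) of the definition), each factor satisfies $h_t \in J_{e_t} \subseteq J_1$ because $e_t \ge 1$; therefore a product of $k \ge n$ such factors lies in $J_1^k \subseteq J_1^n$. As $J_1^n$ is an ideal of $R$, multiplying by the $R$-coefficient and summing keeps us inside $J_1^n$, which gives $J_{n\D}\subseteq J_1^n$ for every $n \ge 1$; the case $n=0$ is the trivial inclusion $J_0 = R \subseteq R = J_1^0$.

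I do not expect a serious obstacle here: the generation-in-bounded-degree translation is a standard consequence of finite generation of the graded algebra, and once it is in place the pigeonhole count together with the monotonicity of the filtration do all the work. The only place to be vigilant is the bookkeeping between the grading degree of $\Re_F(R)$ and the filtration index (they coincide, but it is easy to conflate the degree of a generator with how deep in the filtration it lies), and making sure that no degree-$0$ generators (units coming from $J_0$) are counted among the $k$ positive-degree factors.
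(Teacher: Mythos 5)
Your proof is correct and rests on the same key fact as the paper's: since $\Re_F(R)$ is generated in degrees $\le \D$, every homogeneous element of degree $n\D$ is an $R$-combination of products of at least $n$ positive-degree generators, each of which lies in $J_1$. The paper merely packages this count as an induction on $n$, using the decomposition $J_{n\D} = \sum_{i=1}^{\D} J_{n\D-i}J_i$ together with $J_{n\D-i} \subseteq J_{(n-1)\D}$ and $J_i \subseteq J_1$; your direct pigeonhole expansion into monomials is the same argument unrolled.
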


\begin{proof}
The case $n = 0$ is trivial. For $n > 0$ we may assume that $J_{(n-1)\D} \subseteq J_1^{n-1}$. 
Since $\Re_F(R)$ is generated by homogeneous elements of degree $\le \D$, $J_{n\D} = \sum_{i=1}^\D J_{n\D-i}J_i$
for all $n \ge 1$. Note that $J_{n\D-i} \subseteq J_{(n-1)\D}$ and $J_i \subseteq  J_1$, $i = 1,...,\D$. Then 
$J_{n\D} \subseteq J_{(n-1)\D}J_1 \subseteq J_1^n.$
\end{proof}

To each filtration $F$ one attaches the associated graded ring
$$\gr_F(R) := \bigoplus_{n \ge 0}J_n/J_{n+1}.$$
If $F$ is a Noetherian filtration, it follows from Lemma \ref{J1} and Krull's intersection theorem that
 $\bigcap_{n \ge 0}J_n = 0.$
Therefore, if $f \neq 0$, there is a unique number $n$ such that $f \in J_n \setminus J_{n+1}$.
In this case, one defines the initial element of $f$ as the residue class $f^*$ of $f$ in $J_n/J_{n+1} \subset \gr_F(R)$. For convenience, we set $0^* = 0$.

\begin{remark} \label{filtration}
For a Noetherian monomial order $<$ in a power series ring $R$,
we can list the monomials in an increasing sequence $1 = g_0 < g_1 < g_2 < \cdots.$ 
Let $J_n$ be the ideals generated by all monomials $\ge g_n$. 
Then $F := \{J_n\}_{n\ge 0}$ is a filtration of ideals. In fact, since $1 = g_0 < g_1 < \cdots < g_m = g_mg_0 < g_mg_1 < \cdots < g_mg_n$, we have $g_{m+n} < g_mg_n$ for all $m,n \ge 0$.
It is clear that $\gr_F(R/I)$ is isomorphic to the polynomial ring in the same number of variables as $R$. 
Let $\fm$ be the maximal ideal of $R$.
For each $n \ge 0$, there exist numbers $t$ and $m$ such that $F_n \subseteq \fm^t \subseteq F_m$. 
Therefore, for all ideals $I$ of $R$,
$$\bigcap_{n \ge 0}(J_n+I) = \bigcap_{t \ge 0}(\fm^t+I) = I$$
by Krull's intersection theorem. By \cite[Proposition 5.4.5]{BH}, this together with the Noetherian property of $\gr_F(R/I)$ imply that $F$ is a Noetherian filtration. Now, for every power series $f$, we can define the initial element of $f$ with respect to $F$, which is clearly the initial monomial of $f$ with respect to $<$.
\end{remark}

Let $I$ be an arbitrary ideal of the local ring $R$. We denote by $\ini_F(I)$ the {\em initial ideal} of $I$ in $\gr_F(R)$ generated by the initial elements $f^*$, $f \in I$. Set
$$\gr_F(R/I) := \bigoplus_{n \ge 0}(J_n+I)/(J_{n+1}+I),$$
which is the associated graded ring of $R/I$ with respect to the induced filtration of $F$ in $R/I$. Similarly as Lemma \ref{repre}, we have the following relationship between $\gr_F(R/I)$ and $\ini_F(I)$.

\begin{lemma}
Let $F$ be a Noetherian filtration of ideals. Then
$$\gr_F(R/I) = \gr_F(R)/\ini_F(I).$$
\end{lemma}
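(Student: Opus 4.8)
The plan is to follow the proof of Lemma \ref{repre} essentially verbatim, replacing the powers $J^n$ of a single ideal by the members $J_n$ of the filtration $F$; the only points that require a moment's thought are the ones that were automatic in the $J$-adic case. The heart of the matter is the identification
$$\ini_F(I) = \bigoplus_{n \ge 0} (J_n \cap I + J_{n+1})/J_{n+1},$$
after which the conclusion follows by the same chain of natural isomorphisms as before. So I would first reduce the lemma to this graded identity and then verify it by two inclusions.

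To set up the identity I would put $Q_n := (J_n \cap I + J_{n+1})/J_{n+1} \subseteq J_n/J_{n+1} = \gr_F(R)_n$ and $Q := \bigoplus_{n\ge 0} Q_n$, and show $Q = \ini_F(I)$. For $\ini_F(I) \subseteq Q$ it suffices that $Q$ be an ideal of $\gr_F(R)$ containing every generator $f^*$, $f \in I$: a nonzero $f \in I$ lies in $J_n \setminus J_{n+1}$ for a unique $n$, so $f^*$ is represented by $f \in J_n \cap I$ and thus $f^* \in Q_n$. Here the standing hypothesis $\bigcap_{n\ge 0} J_n = 0$ enters, guaranteeing that the order of $f$ is well defined; for a Noetherian filtration this is exactly what Lemma \ref{J1} together with Krull's intersection theorem provide, which is the one place the Noetherian assumption is used. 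For the reverse inclusion $Q \subseteq \ini_F(I)$, every homogeneous element of $Q_n$ is the class modulo $J_{n+1}$ of some $a \in J_n \cap I$; if $a \in J_{n+1}$ this class is zero, and otherwise $a$ has order exactly $n$, so the class equals $a^* \in \ini_F(I)$.

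The step I expect to be the main (indeed, essentially the only) genuine verification is that $Q$ is closed under multiplication by $\gr_F(R)$, since this is where the filtration axioms are actually needed rather than the strict equalities of the adic case. Given $g \in J_m$ and $a \in J_n \cap I$, the product of their initial classes is represented by $ga \in J_{m+n}$ (using $J_mJ_n \subseteq J_{m+n}$), and $ga \in I$ because $a \in I$; hence $ga \in J_{m+n}\cap I$ and the product lies in $Q_{m+n}$, giving $\gr_F(R)_m\, Q_n \subseteq Q_{m+n}$ and confirming that $Q$ is a graded ideal.

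With the identity in hand I would conclude, exactly as in Lemma \ref{repre}, with the isomorphism chain
\begin{align*}
\gr_F(R/I) &= \bigoplus_{n \ge 0} \dfrac{J_n+I}{J_{n+1}+I} \cong \bigoplus_{n \ge 0} \dfrac{J_n}{J_n \cap (J_{n+1}+I)} \\
&= \bigoplus_{n \ge 0} \dfrac{J_n}{J_n \cap I + J_{n+1}} = \gr_F(R)/\ini_F(I),
\end{align*}
where the first isomorphism is the second isomorphism theorem applied to $J_{n+1} \subseteq J_n$ (so that $J_n + (J_{n+1}+I) = J_n + I$), and the middle equality is the modular law $J_n \cap (J_{n+1}+I) = J_{n+1} + (J_n \cap I)$, valid since $J_{n+1} \subseteq J_n$. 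These last manipulations are routine and identical in form to the adic case, so I would not belabor them.
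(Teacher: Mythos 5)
Your proof is correct and is exactly the argument the paper intends: the paper gives no explicit proof of this lemma, stating only that it holds ``similarly as Lemma \ref{repre}'', and your write-up is precisely that adaptation, with the identification $\ini_F(I) = \bigoplus_{n \ge 0}(J_n \cap I + J_{n+1})/J_{n+1}$ followed by the same chain of isomorphisms. You also correctly isolate the two points that genuinely need the filtration hypotheses --- the well-definedness of initial elements via $\bigcap_{n\ge 0} J_n = 0$ (Lemma \ref{J1} plus Krull) and the ideal property of $Q$ via $J_mJ_n \subseteq J_{m+n}$ --- so nothing is missing.
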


Inspired by Theorem \ref{main}(ii) we will study the invariance of the initial ideal $\ini_F(I)$ under small perturbations with respect to the filtration $F$. For that we need to extend the notion of Artin-Rees number.

If $F$ is a Noetherian filtration, the ideal $Q := \bigoplus_{n\ge 0}J_n \cap I$ of $\Re_F(R)$ is finitely generated. Let $c = d(Q)$, the maximum degree of the elements of a homogeneous minimal generating set of $Q$. It is easy to see that
$$J_n \cap I = \sum_{t=1}^cJ_{n-t}(J_t \cap I)$$
for all $n \ge c$. We call $d(Q)$ the {\em Artin-Rees number} of $I$ with respect to $F$, and we denote it by $\arn_F(I)$. 

We have the following formula for $\arn_F(I)$ in terms of the initial ideal $\ini_F(I)$.  

\begin{proposition} \label{ar2}
Let $F$ be a Noetherian filtration of ideals. Then 
$\arn_F(I) = d(\ini_F(I)).$
\end{proposition}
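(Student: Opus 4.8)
The plan is to follow the template of Proposition \ref{ar}, replacing the single-term expression $J(J^{n-1}\cap I)$ available in the standard graded case by the ideals
$$A_n := \sum_{t=1}^{n} J_t\,(J_{n-t}\cap I), \qquad n \ge 0,$$
which are precisely the degree-$n$ components $(\Re_F(R)_+\,Q)_n$ under the identification $\Re_F(R)_n = J_n$. First I would record the two graded formulas $\ini_F(I)_n = (J_n\cap I + J_{n+1})/J_{n+1}$ and $(\gr_F(R)_+\,\ini_F(I))_n = (A_n + J_{n+1})/J_{n+1}$, both obtained by computing products of initial elements exactly as in Lemma \ref{repre}, together with $(\Re_F(R)_+\,Q)_n = A_n$. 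Since $\gr_F(R)$ is graded over the local ring $R/J_1$ and $\Re_F(R)$ is graded over $R$, graded Nakayama shows that $\arn_F(I) = d(Q)$ is the largest $n$ with $J_n\cap I \ne A_n$, whereas $d(\ini_F(I))$ is the largest $n$ with $J_n\cap I \not\subseteq A_n + J_{n+1}$ (the reverse inclusion $A_n \subseteq J_n\cap I$ being automatic).

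With this reformulation the inequality $d(\ini_F(I)) \le \arn_F(I)$ is immediate: if $J_n\cap I = A_n$, then certainly $J_n\cap I \subseteq A_n + J_{n+1}$, so every degree in which $\ini_F(I)$ needs a minimal generator is a degree in which $Q$ needs one as well.

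For the reverse inequality $\arn_F(I) \le d(\ini_F(I))$ I would set $c = d(\ini_F(I))$ and prove $J_n\cap I = A_n$ for every $n > c$, which forces $Q$ to be generated in degrees $\le c$. Given $f \in J_n\cap I$, the hypothesis $n > c$ yields $f \in A_n + J_{n+1}$; writing $f = g + h$ with $g \in A_n \subseteq I$ gives $h = f-g \in I\cap J_{n+1}$, and iterating — each new remainder lands in $I\cap J_{n+k}$, hence in $A_{n+k}+J_{n+k+1}$ — produces, granting the inclusions $A_{n+k}\subseteq A_n$ discussed below, $f \in A_n + J_{n+k}$ for all $k$. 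A Krull intersection argument then gives $f\in A_n$: by Lemma \ref{J1} one has $J_{n+k}\subseteq \fm^m$ as soon as $n+k\ge m\D$, so $\bigcap_k (A_n+J_{n+k}) \subseteq \bigcap_m (A_n+\fm^m) = A_n$.

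The step I expect to be the main obstacle is making the iteration close up, that is, pushing each remainder back into the \emph{fixed} ideal $A_n$ rather than into the \emph{moving} ideal $A_{n+k}$. This rests on the monotonicity $A_{m+1}\subseteq A_m$, which is automatic in the standard graded setting of Proposition \ref{ar} but here needs a short verification: for $1\le t\le m$ one has $J_t(J_{m+1-t}\cap I)\subseteq J_t(J_{m-t}\cap I)$ because $J_{m+1-t}\subseteq J_{m-t}$, while the extreme term $J_{m+1}(J_0\cap I)=J_{m+1}I$ is absorbed by the top term $J_m I = J_m(J_0\cap I)$ of $A_m$ since $J_{m+1}\subseteq J_m$. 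Once $A_{m+1}\subseteq A_m$ is in hand, the inclusions $A_{n+k}\subseteq A_n$ let the telescoping argument proceed verbatim as in Proposition \ref{ar}, and the proof concludes.
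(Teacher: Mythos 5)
Your proof is correct and takes essentially the same approach as the paper's: the easy inequality follows from the characterization of minimal generator degrees, and the reverse inequality is proved by the identical successive-approximation argument --- peel off a remainder in $J_{n+1}\cap I$, iterate using monotonicity of the approximating ideals, and finish with Krull's intersection theorem via Lemma \ref{J1}. Your bookkeeping via graded Nakayama and $A_n = (\Re_F(R)_+\,Q)_n$ (whose monotonicity $A_{m+1}\subseteq A_m$ you rightly flag and verify, and which correctly retains the term $J_nI$) matches the paper's truncated sums $\sum_{t=1}^c J_{n-t}(J_t\cap I)$ step for step.
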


\begin{proof} 
Note that $d(\ini(I))$ is the least number $c$ such that for all $n \ge c$, $\ini(I)_n$ is generated by graded elements of $\ini(I)$ of degree $\le c$. Since 
$$\ini(I)_n = (J_n \cap I+J_{n+1})/J_{n+1},$$
it follows that $d(\ini(I))$ is the least integer $c$ such that
$$J_n \cap I + J_{n+1} = \sum_{t = 1}^cJ_{n-t}(J_t\cap I) + J_{n+1}$$
for all $n \ge c$. For $a =  \arn_F(I)$, we have 
$J_n \cap I = \sum_{t = 1}^aJ_{n-t}(J_t\cap I)$
for $n \ge a$. Hence, $d(\ini(I)) \le \arn_F(I)$.
\par

To prove $\arn_F(I) \le d(\ini(I))$ let $f$ be an arbitrary element of  $J_n \cap I$, $n \ge d(\ini(I))$.
Then $f \in \sum_{t = 1}^cJ_{n-t}(J_t\cap I) + J_{n+1}$.
Write $f = g +  f_1$ for some $g \in \sum_{t = 1}^cJ_{n-t}(J_t\cap I)$ and $f_1 \in J_{n+1}$. 
Since $f, g \in I$, we have 
$$f_1 \in J_{n+1} \cap I \subseteq \sum_{t = 1}^cJ_{n+1-t}(J_t\cap I) + J_{n+2} \subseteq \sum_{t = 1}^cJ_{n-t}(J_t\cap I) + J_{n+2}.$$ 
Hence, $f = g + f_1 \in \sum_{t = 1}^cJ_{n-t}(J_t\cap I) + J_{n+2}.$ 
Continuing like that we have $f \in \sum_{t = 1}^cJ_{n-t}(J_t\cap I) + J_m$ for all $m \ge n$. 
By Lemma \ref{J1}, there exists $\D$ such that $J_1^{m\D} \subseteq J_{m\D} \subseteq J_1^m$ for $m \ge 1$. Therefore, $f \in \sum_{t = 1}^cJ_{n-t}(J_t\cap I) + J_1^m$ for all $m \ge n$.
By Krull's intersection theorem, this implies $f \in \sum_{t = 1}^cJ_{n-t}(J_t\cap I)$.
Thus, $J_n \cap I \subseteq \sum_{t = 1}^cJ_{n-t}(J_t\cap I)$. Clearly, $J_n \cap I \supseteq \sum_{t = 1}^cJ_{n-t}(J_t\cap I)$.
So we have 
$$J_n \cap I = \sum_{t = 1}^cJ_{n-t}(J_t\cap I)$$ 
for $n \ge d(\ini(I))$, which implies $\arn_F(I) \le d(\ini(I))$.
\end{proof}

The above properties of the initial ideal allow us to prove a similar result as Theorem \ref{main} for perturbations with respect to a Noetherian filtration. 

\begin{theorem} \label{main2}
Let $F = \{J_n\}_{n\ge 0}$ be a Noetherian filtration of ideals in a local ring $R$. Let $\D$ be the maximum degree of the element of a minimal homogeneous generating set of the Rees algebra $\Re_F(R)$. Let $I = (f_1,...,f_r)$, where $f_1,\ldots, f_r$ is a $J_1$-filter regular sequence in $R$. Set
$$a_i = a_{J_1}((f_1,...,f_{i-1}):f_i/(f_1,...,f_{i-1}))$$
for $i = 1,...,r$, and 
$$N := \max\{(a_1+ 2a_2 +\cdots + 2^{r-1}a_r+1)\D,\arn_{F}(f_1)+1,...,\arn_{F}(f_1,...,f_r)+1\}.$$
Let $f_i' = f_i +\e_i$, where $\e_i$ is an arbitrary elements in $J_{N}$, $i = 1,...,r$, and $I' = (f_1',...,f_r')$.
Then 
$f_1',\ldots, f_r'$ is a $J_1$-filter regular sequence
and $\ini_F(I') = \ini_F(I)$.
\end{theorem}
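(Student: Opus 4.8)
The plan is to reproduce the inductive scheme of Theorem \ref{main} almost verbatim, reading $\ini$, order and $\arn_J$ as the filtration versions $\ini_F$, $o_F$ and $\arn_F$, and using Lemma \ref{J1} as the single new ingredient that bridges the two scales present in the statement. As in Theorem \ref{main}, everything reduces to filtration analogues of Proposition \ref{one} (one element) and Proposition \ref{two} (two elements), together with the quotient-compatibility statements of Lemmas \ref{qin} and \ref{qar}, whose proofs transcribe without change. Note that Lemma \ref{exchange} is purely ideal-theoretic and never sees the filtration, so it is available as is.

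First I would treat the case $r=1$, where $N = \max\{(a_1+1)\D,\ \arn_F(f_1)+1\}$. Writing $c=\arn_F(f_1)$ and arguing as in Proposition \ref{one}, it suffices to show $(gf_1')^*=(gf_1)^*$ for every $g\in R$, the superscript now denoting the initial form with respect to $F$. If $gf_1=0$ then $g\in 0:f_1$, and since $\e_1\in J_N\subseteq J_{(a_1+1)\D}\subseteq J_1^{a_1+1}$ by Lemma \ref{J1} while $J_1^{a_1}(0:f_1)=0$, we get $g\e_1=0$, hence $gf_1'=0$. Otherwise set $n=o_F(gf_1)$ and suppose $o_F(g\e_1)\le n$; then $n\ge o_F(\e_1)\ge N$. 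Using the defining relation $J_n\cap(f_1)=\sum_{t=1}^{c}J_{n-t}(J_t\cap(f_1))=f_1\sum_{t=1}^{c}J_{n-t}(J_t:f_1)$ valid for $n\ge c$ (see the discussion preceding Proposition \ref{ar2}), I would deduce $g\in J_{n-c}+(0:f_1)$, whence
$$g\e_1\in J_NJ_{n-c}+J_N(0:f_1)\subseteq J_{n+1}+J_1^{a_1+1}(0:f_1)=J_{n+1},$$
using $N-c\ge 1$ and Lemma \ref{J1} once more. This contradicts $o_F(g\e_1)\le n$, so $o_F(g\e_1)>n$ and $(gf_1')^*=(gf_1)^*$. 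The identity $0:f_1'=0:f_1$ and the $J_1$-filter regularity of $f_1'$ then follow exactly as in Proposition \ref{one}.

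The two-element step and the induction carry over with almost no change. The entire bookkeeping of the numbers $a_i$, including the doubling $a_{J_1}((f_1',\dots,f_{i-1}'):f_i'/(f_1',\dots,f_{i-1}'))\le 2^{i-1}a_i$ and the estimate $a_{J_1}((f_2,\dots,f_i):f_1/(f_2,\dots,f_i))\le a_1+\cdots+a_i$, involves only $J_1$-adic annihilators and Lemma \ref{exchange}; it is therefore identical to the corresponding passages in the proofs of Proposition \ref{two} and Theorem \ref{main} with $J$ replaced by $J_1$, and it contributes the summand $(a_1+2a_2+\cdots+2^{r-1}a_r+1)\D$ to $N$ after the single conversion $J_N\subseteq J_1^{\,a_1+2a_2+\cdots+2^{r-1}a_r+1}$ supplied by Lemma \ref{J1}. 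The filtration-sensitive parts, namely the invariance of initial ideals through Lemma \ref{qin} and the descent of Artin-Rees numbers through Lemma \ref{qar}, are exactly the places where $\arn_F$ enters, reproducing the role of $\arn_J(f_1,\dots,f_i)$ and hence the bounds $\arn_F(f_1,\dots,f_i)+1$ in $N$.

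The hard part is the mismatch of scales: filter regularity and the numbers $a_i$ are measured by the $J_1$-adic filtration, whereas orders, initial forms and Artin-Rees numbers live in $F$. Lemma \ref{J1} is the bridge, and the factor $\D$ in $N$ is precisely its conversion rate. The one point requiring care is that this bridge must survive passage to the quotient rings $\bar R=R/(f_2,\dots,f_i)$ and $R'=R/(f_1')$ used in the induction: there the induced filtration $\bar F$ is again Noetherian (its Rees algebra is a homogeneous quotient of $\Re_F(R)$) with maximal generator degree $\bar\D\le\D$, so monotonicity of the filtration gives $\bar J_{n\D}\subseteq\bar J_{n\bar\D}\subseteq\bar J_1^{\,n}$, and Lemma \ref{J1} may be applied in $\bar R$ with the ambient $\D$. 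With this observation the inductive argument of Theorem \ref{main} goes through, yielding both the $J_1$-filter regularity of $f_1',\dots,f_r'$ and $\ini_F(I')=\ini_F(I)$.
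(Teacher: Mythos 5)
Your proposal is correct and takes essentially the same route the paper intends: the paper explicitly leaves Theorem \ref{main2} to the reader, directing one to transcribe the arguments of Sections 2 and 3 (Propositions \ref{one} and \ref{two}, Lemmas \ref{qin} and \ref{qar}, Proposition \ref{ar2}) with Lemma \ref{J1} supplying the single conversion $J_N \subseteq J_1^{a_1+2a_2+\cdots+2^{r-1}a_r+1}$, which is exactly your bridge between the $F$-scale and the $J_1$-adic scale. Your added remark that the induced filtration on the quotient rings $R/(f_2,\dots,f_i)$ and $R/(f_1')$ stays Noetherian with generator degree at most $\D$, so that Lemma \ref{J1} still applies there, is a genuine point of care that the paper's sketch leaves implicit.
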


The appearance of $\D$ in Theorem \ref{main2} comes from the approximation of $F$ by the  $J_1$-adic filtration. In fact, if $\e \in J_N$, then $\e \in J_1^{a_1+ 2a_2 +\cdots + 2^{r-1}a_r+1}$ by Corollary \ref{J1}.
Similar to Corollary \ref{regular} we have the following consequence.

\begin{corollary} \label{regular 2}
Let $F = \{J_n\}_{n\ge 0}$ be a Noetherian filtration of ideals in a local ring $R$. 
Let $I = (f_1,...,f_r)$, where $f_1,\ldots, f_r$ is a regular sequence in $R$. Set
$N =  \arn_{F}(f_1,...,f_r)+1.$
Let $f_i' = f_i +\e_i$, where $\e_i$ is an arbitrary elements in $J_{N}$, $i = 1,...,r$, and $I' = (f_1',...,f_r')$.
Then $f_1',\ldots, f_r'$ is a regular sequence and $\ini_F(I') = \ini_F(I)$.
\end{corollary}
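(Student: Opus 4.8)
The plan is to follow the proof of Corollary \ref{regular} line by line, replacing every $J$-adic tool by its counterpart for the filtration $F$. Via permutability of regular sequences, the whole argument reduces to a \emph{single-element} statement, the filtration analogue of Proposition \ref{one}: if $f$ is a regular element, $c = \arn_F(f)+1$, and $\e \in J_c$, then $f' := f+\e$ is again regular and $\ini_F(f') = \ini_F(f)$. This is the real content of the corollary, and it is genuinely sharper than what one gets by specializing Theorem \ref{main2} to $r=1$: since all the colon modules vanish for a regular sequence (the $a_i$ are $0$), working directly with $F$ avoids the passage to the $J_1$-adic filtration and hence eliminates the factor $\D$ appearing in Theorem \ref{main2}.

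To prove the single-element statement I would show $(gf)^* = (gf')^*$ for every $g \in R$; since $\ini_F((f))$ is generated by the elements $(gf)^*$ and $\ini_F((f'))$ by the $(gf')^*$, this yields $\ini_F(f') = \ini_F(f)$ at once, and $gf' = 0 \Leftrightarrow gf = 0 \Leftrightarrow g = 0$ then gives regularity of $f'$. Write $o_F(h)$ for the unique integer with $h \in J_{o_F(h)}\setminus J_{o_F(h)+1}$. For $g \ne 0$ (the case $g\e = 0$ being trivial) set $n = o_F(gf)$ and suppose, towards a contradiction, that $o_F(g\e) \le n$. Since $\e \in J_c$ we have $o_F(g\e) \ge o_F(\e) \ge c > \arn_F(f)$, so $n \ge \arn_F(f)$ and the defining Artin--Rees relation of $\arn_F(f)$ applies:
$$gf \in J_n \cap (f) = \sum_{t=1}^{\arn_F(f)} J_{n-t}\bigl(J_t \cap (f)\bigr) = f\sum_{t=1}^{\arn_F(f)} J_{n-t}(J_t:f),$$
where $J_t \cap (f) = f(J_t:f)$ because $f$ is regular. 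Cancelling the regular element $f$ gives $g \in J_{n-\arn_F(f)}$, whence $g\e \in J_{n-\arn_F(f)}J_c \subseteq J_{n+1}$, contradicting $o_F(g\e) \le n$. Therefore $o_F(g\e) > o_F(gf)$, and $(gf')^* = (gf)^*$.

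With the single-element result in hand, the induction on $r$ proceeds exactly as in Corollary \ref{regular}. By permutability $f_1$ is regular modulo $(f_2,\dots,f_r)$; applying the single-element statement in $\bar R = R/(f_2,\dots,f_r)$ with the induced filtration $\bar F$—using the filtration analogues of Lemmas \ref{qin} and \ref{qar}, proved by the same computations as in Section 2, to pass between $\bar R$ and $R$ and to guarantee $\arn_{\bar F}(f_1\bar R)+1 \le N$—yields that $f_1'$ is regular modulo $(f_2,\dots,f_r)$ and $\ini_F(f_1',f_2,\dots,f_r) = \ini_F(I)$. Hence $f_1',f_2,\dots,f_r$ is again a regular sequence, and Proposition \ref{ar2} gives $\arn_F(f_1',f_2,\dots,f_r) = \arn_F(f_1,\dots,f_r)$, so the same $N$ governs the ring $R/(f_1')$. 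Applying the inductive hypothesis to the regular sequence $f_2,\dots,f_r$ in $R/(f_1')$ replaces it by $f_2',\dots,f_r'$ without changing the initial ideal, and one concludes $\ini_F(I') = \ini_F(I)$.

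The main obstacle is the single-element computation, and specifically the strict inequality $o_F(g\e) > o_F(gf)$. Because a filtration only satisfies $J_mJ_n \subseteq J_{m+n}$ rather than equality, one cannot track orders as naively as in the $J$-adic case; the inequality is forced only by combining the regularity of $f$ (to cancel $f$ and to rewrite $J_t\cap(f)$) with the precise Artin--Rees structure of $(f)$ supplied by Proposition \ref{ar2}. Everything else is bookkeeping transported from Section 2 and Corollary \ref{regular}.
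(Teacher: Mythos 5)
Your proposal is correct and follows essentially the route the paper itself prescribes: the paper gives no separate proof of this corollary but explicitly leaves it to the reader to transport the arguments of Sections 2 and 3, which is exactly what you do—reduce via permutability to a filtration analogue of Proposition \ref{one} for regular elements (where $0:f=0$ makes the cancellation clean and removes the factor $\D$ of Theorem \ref{main2}), and then run the induction of Corollary \ref{regular} using the filtration versions of Lemmas \ref{qin} and \ref{qar} together with Proposition \ref{ar2}. Your single-element computation with the sum-form Artin--Rees relation $J_n\cap(f)=\sum_{t}J_{n-t}(J_t\cap(f))=f\sum_t J_{n-t}(J_t:f)$ is the right adaptation and is sound.
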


Theorem \ref{filtration} has the following converse, which is similar to Theorem \ref{converse}.

\begin{theorem} \label{converse 2}
Let $F = \{J_n\}_{n\ge 0}$ be a Noetherian filtration of ideals in a local ring $R$. Let $f_1,...,f_r$ be elements in $R$.
There exists a number $N$ such that 
$$\ini_F(f_1',...,f_i') = \ini_F(f_1,...,f_i)$$
for any sequence $f_1',...,f_i'$ with  $f'_i - f_i \in J_N$, $i = 1,...,r$,
if and only if $f_1,...,f_r$ is a $J_1$-filter regular sequence.
\end{theorem}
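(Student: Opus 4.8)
The plan is to prove both implications, following the pattern of Theorem \ref{converse} but replacing the $J$-adic tools by their filtration counterparts. The \emph{if} direction is routine: assuming $f_1,\ldots,f_r$ is a $J_1$-filter regular sequence, each initial segment $f_1,\ldots,f_i$ is again a $J_1$-filter regular sequence, so Theorem \ref{main2} (applied with $r$ replaced by $i$) produces a number $N_i$ such that $\ini_F(f_1',\ldots,f_i') = \ini_F(f_1,\ldots,f_i)$ whenever $f_j' - f_j \in J_{N_i}$. Taking $N = \max\{N_1,\ldots,N_r\}$ and using $J_N \subseteq J_{N_i}$ gives the required $N$.

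For the \emph{only if} direction, suppose such an $N$ exists; since shrinking the allowable perturbations preserves the hypothesis, I may enlarge $N$ and assume $N \ge \arn_F(f_1,\ldots,f_i)+1$ for all $i$. Fix $i$ and set $\bar R = R/(f_1,\ldots,f_{i-1})$ with the induced filtration $\bar F = \{(J_n+(f_1,\ldots,f_{i-1}))/(f_1,\ldots,f_{i-1})\}$. First I would produce a filter-regular perturbation by prime avoidance. If $\fp$ is an associated prime of $(f_1,\ldots,f_{i-1})$ with $\fp \not\supseteq J_1$, then $\fp \not\supseteq J_1^N$, and since $J_1^N \subseteq J_N$ this forces $J_N \not\subseteq \fp$; as there are only finitely many such $\fp$, the avoidance lemma \cite[Lemma of Appendix]{Da} yields $\e_i \in J_N$ with $f_i' := f_i + \e_i \notin \fp$ for all of them, so $f_i'$ is a $J_1$-filter regular element of $\bar R$. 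By hypothesis $\ini_F(f_1,\ldots,f_{i-1},f_i') = \ini_F(f_1,\ldots,f_i)$, whence $\arn_F(f_1,\ldots,f_{i-1},f_i') = \arn_F(f_1,\ldots,f_i) < N$ by Proposition \ref{ar2}.

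The crux is then to run this perturbation backwards inside $\bar R$, and for this I would establish a filtration analogue of Lemma \ref{filter}: if $g$ is a $J_1$-filter regular element and $c = \arn_F(g)+1$, then $g + \delta$ is $J_1$-filter regular for every $\delta \in J_c$. The argument should copy the $J$-adic one: one checks $(0:g)\cap J_n = 0$ for $n \gg 0$ (using that $(0:g)$ is killed by a power of $J_1$ together with Lemma \ref{J1}); then for $x \in (0:(g+\delta))\cap J_n$ one has $xg = -x\delta \in J_{n+c}\cap (g)$, and because $(g)$ is principal with $\arn_F(g)=c-1$ one can write $J_{n+c}\cap (g) = g\sum_{t=1}^{c-1}J_{n+c-t}(J_t:g) \subseteq gJ_{n+1}$, whence $x \in J_{n+1}+(0:g)$; the modular law and $(0:g)\cap J_n=0$ give $x \in J_{n+1}$, and iterating together with $\bigcap_n J_n = 0$ forces $x=0$. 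A filtration version of Lemma \ref{qar} (proved by the same modular-law manipulation) gives $\arn_{\bar F}(f_i'\bar R) \le \arn_F(f_1,\ldots,f_{i-1},f_i') < N$, so $N \ge \arn_{\bar F}(f_i'\bar R)+1$. Since the image of $-\e_i$ lies in $\bar J_N \subseteq \bar J_{\arn_{\bar F}(f_i'\bar R)+1}$, the filtration analogue of Lemma \ref{filter}, applied to the $J_1$-filter regular element $f_i'$ of $\bar R$, shows that $f_i = f_i' - \e_i$ is $J_1$-filter regular in $\bar R$. As this holds for every $i$, the sequence $f_1,\ldots,f_r$ is $J_1$-filter regular.

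The main obstacle is the filtration analogue of Lemma \ref{filter}: although the computation is structurally identical to the $J$-adic case, the Artin-Rees expansion $J_n \cap (g) = \sum_{t} J_{n-t}(J_t \cap (g))$ is now a sum rather than a single product, so one must verify that the degree bookkeeping (each index $n+c-t \ge n+1$ for $t \le c-1$) still collapses the sum into $gJ_{n+1}$, and that $\bigcap_n J_n = 0$ — which is not built into the definition of a filtration — holds via Lemma \ref{J1} and Krull's intersection theorem. Everything else (prime avoidance with $J_N$, the quotient Artin-Rees bound, and the converse via Theorem \ref{main2}) is a direct transcription of the $J$-adic arguments of Section 3.
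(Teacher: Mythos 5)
Your proposal is correct and takes essentially the same route the paper intends: the paper explicitly leaves the proof of Theorem \ref{converse 2} to the reader as a transcription of the Section 2--3 arguments, and your write-up is a faithful execution of the proof of Theorem \ref{converse} in the filtration setting (Davis's prime avoidance with $J_1^N \subseteq J_N$, invariance of $\arn_F$ via Proposition \ref{ar2}, a filtration version of Lemma \ref{qar}, and a filtration version of Lemma \ref{filter} resting on $\bigcap_n J_n = 0$ from Lemma \ref{J1} and Krull's intersection theorem, with the converse supplied by Theorem \ref{main2} applied to each initial segment). You also correctly identify and resolve the only real point of care, namely that the Artin--Rees expansion is now a sum $\sum_{t=1}^{c-1} J_{n+c-t}(J_t \cap (g))$ whose terms all land in $gJ_{n+1}$.
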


We leave the reader to check the proofs of the above results following the arguments for similar results in Sections 2 and 3. 

Applying Corollary \ref{regular 2} to the filtration described in Remark \ref{filtration} we immediately obtain the following solution to Problem \ref{adamus}.

\begin{corollary}  
Let $R$ be a power series ring over a field or a convergent power series ring over $\Bbb R$ or $\Bbb C$ in several variables. Let $I = (f_1,...,f_r)$, where $f_1,\ldots, f_r$ is a regular sequence in $R$. For any Noetherian monomial order in $R$, there exist a number $N$ such that for $n \ge N$, $j^n(f_1),...,j^n(f_r)$ is a regular sequence and $\ini_<(I) = \ini_<(I')$, where $I' = (j^n(f_1),...,j^n(f_r))$. 
\end{corollary}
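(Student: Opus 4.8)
The plan is to deduce this corollary as a direct application of Corollary \ref{regular 2} together with Remark \ref{filtration}; the only real work is to translate between the $\fm$-adic language of jets and the $F$-adic language of the filtration attached to $<$.

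First I would fix a Noetherian monomial order $<$ and form the filtration $F = \{J_n\}_{n\ge 0}$ of Remark \ref{filtration}, where $J_n$ is generated by all monomials $\ge g_n$ for the increasing enumeration $1 = g_0 < g_1 < \cdots$ of all monomials. By that remark $F$ is a Noetherian filtration of ideals and, for every power series $f$, the initial element of $f$ with respect to $F$ coincides with the initial monomial $\ini_<(f)$; hence $\ini_F(I) = \ini_<(I)$ for every ideal $I$. Since $f_1,\ldots,f_r$ is a regular sequence, Corollary \ref{regular 2} applies: setting $N_0 := \arn_F(f_1,\ldots,f_r)+1$, any perturbation $f_i' = f_i + \e_i$ with $\e_i \in J_{N_0}$ is again a regular sequence satisfying $\ini_F(f_1',\ldots,f_r') = \ini_F(f_1,\ldots,f_r)$.

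Second—and this is the one genuinely new point—I must check that the jet perturbation $\e_i := j^n(f_i) - f_i$ lands in $J_{N_0}$ once $n$ is large. The element $\e_i$ consists of terms of degree $> n$, so $\e_i \in \fm^{n+1}$, whereas the hypothesis of Corollary \ref{regular 2} is phrased in terms of $J_{N_0}$. The key observation is that, since $<$ is Noetherian, only finitely many monomials are $< g_{N_0}$; letting $D$ bound their degrees, every monomial of degree $> D$ must satisfy $\ge g_{N_0}$ and therefore lie in $J_{N_0}$. This yields $\fm^{D+1} \subseteq J_{N_0}$. Consequently, taking $N := D$ and any $n \ge N$, we obtain $\e_i \in \fm^{n+1} \subseteq \fm^{D+1} \subseteq J_{N_0}$.

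Finally I would assemble the pieces: for $n \ge N$ the elements $f_i' = j^n(f_i) = f_i + \e_i$ have $\e_i \in J_{N_0}$, so Corollary \ref{regular 2} gives that $j^n(f_1),\ldots,j^n(f_r)$ is a regular sequence with $\ini_F(I') = \ini_F(I)$; translating back through the identity $\ini_F = \ini_<$ produces $\ini_<(I') = \ini_<(I)$, as required. The only mild obstacle is the filtration-comparison step of the third paragraph, namely producing a power of $\fm$ inside $J_{N_0}$; this is precisely the cofinality of $F$ with the $\fm$-adic filtration already recorded in Remark \ref{filtration}, so everything else reduces to a direct citation.
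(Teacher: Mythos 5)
Your proposal is correct and follows exactly the route the paper takes: the paper's proof is the single sentence ``Applying Corollary \ref{regular 2} to the filtration described in Remark \ref{filtration} we immediately obtain the following solution,'' and your argument simply makes explicit the details it suppresses. In particular, your observation that the Noetherian property of $<$ forces $\fm^{D+1} \subseteq J_{N_0}$ (so that the jet perturbations $j^n(f_i)-f_i \in \fm^{n+1}$ satisfy the hypothesis of Corollary \ref{regular 2}) is precisely the comparison of $F$ with the $\fm$-adic filtration that the paper leaves implicit in Remark \ref{filtration}.
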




\end{document}